\newtheorem{theorem}{Theorem}[section]
\newtheorem{corollary}[theorem]{Corollary}
\newtheorem{definition}[theorem]{Definition}
\newtheorem{example}[theorem]{Example}
\newtheorem{lemma}[theorem]{Lemma}
\newtheorem{proposition}[theorem]{Proposition}
\newtheorem{remark}[theorem]{Remark}
\DeclareMathOperator{\tr}{tr}
\DeclareMathOperator{\adj}{adj}
\DeclareMathOperator{\Real}{Re}
\DeclareMathOperator{\Imaginary}{Im}
\renewcommand{\Re}{\Real}
\renewcommand{\Im}{\Imaginary}
\title[Period Matrices and Quasi-trees]{Period matrices and homological quasi-trees on discrete Riemann surfaces}
\author[W.Y. Lam]{Wai Yeung Lam}
\thanks{The first author was partially supported by the FNR grant CoSH O20/14766753.}
\address{Department of Mathematics, University of Luxembourg, Maison du nombre, 6 avenue de la Fonte, L-4364 Esch-sur-Alzette, Luxembourg}
\email{wyeunglam@gmail.com}
\author[O.-H.S. Lo]{On-Hei Solomon Lo}
\thanks{The second author was partially supported by the Fundamental Research Funds for the Central Universities.}
\address{School of Mathematical Sciences,
Key Laboratory of Intelligent Computing and Applications (Ministry of Education), Tongji University, 
Shanghai 200092, China}
\email{ohsolomon.lo@gmail.com}
\author[C.H. Yuen]{Chi Ho Yuen}
\thanks{The third author was partially supported by the Ministry of Science and Technology of Taiwan project MOST 114-2115-M-A49-015-MY3.}
\address{Department of Applied Mathematics, National Yang Ming Chiao Tung University, Hsinchu 30009, Taiwan}
\email{chyuen@math.nctu.edu.tw}
\begin{document}
	
	\maketitle

\begin{abstract}
We study discrete period matrices associated with graphs cellularly embedded on closed surfaces, resembling classical period matrices of Riemann surfaces. Defined via integrals of discrete harmonic $1$-forms, these period matrices are known to encode discrete conformal structure in the sense of circle patterns. We obtain a combinatorial interpretation of the discrete period matrix, where its minors are expressed as weighted sums over certain spanning subgraphs, which we call homological quasi-trees. Furthermore, we relate the period matrix to the determinant of the Laplacian for a flat complex line bundle. We derive a combinatorial analogue of the Weil–Petersson potential on the Teichmüller space, expressed as a weighted sum over homological quasi-trees. Finally, we study the collection of homological quasi-trees from a (delta-)matroidal perspective. The discrete period matrix plays a role similar to that of the response matrix in circular planar networks, thereby addressing a question posed by Richard Kenyon. 
\end{abstract}

\sloppy

\section{Introduction}\label{sec:intro}

A Riemann surface is a surface equipped with a conformal structure. Riemann surfaces arise in many areas of mathematics, including algebraic geometry, complex analysis, and mathematical physics. A key object associated with a Riemann surface is its {\em period matrix}, which encodes information about the conformal structure and defines the Jacobian variety. Classically, the period matrix is defined via line integrals of holomorphic $1$-forms over closed loops. In order to deduce a relation to combinatorics, we have to formulate the period matrix in terms of {\em harmonic $1$-forms}---the real parts of holomorphic $1$-forms.

Over a closed Riemann surface of genus $g$, the space of harmonic $1$-forms is of real dimension $2g$. Fixing a homology basis for $H_1(S;\mathbb{Z}) \cong \mathbb{Z}^{2g}$, each harmonic $1$-form is uniquely determined by its periods, which are the integrals of the harmonic $1$-form over the homology basis. For every harmonic $1$-form $\omega$, there is a conjugate harmonic $1$-form $\star\omega$ such that together they become the real and imaginary parts of a holomorphic $1$-form. Our period matrix is defined via the linear map sending the periods of a harmonic $1$-form to the periods of the conjugate harmonic $1$-form. It is a real $2g \times 2g$ invertible matrix.

Analogously, given a cellularly embedded graph equipped with positive edge weights, it is also associated with a period matrix by considering discrete harmonic $1$-forms. Let $(V,E,F)$ be a cell decomposition of a closed oriented surface $S$ of genus $g$, equipped with positive edge weights $\mathsf{c}: E \to \mathbb{R}_{>0}$. We denote by $\gamma_1, \gamma_2, \dots, \gamma_{2g}$ oriented simple loops on the surface that form a standard basis of the first homology $H_1(S;\mathbb{Z})$. Namely, the intersection matrix $\Omega$, with entries $\Omega_{i,j} = \iota(\gamma_i, \gamma_j)$ being the algebraic intersection number, is of the form
\[
\Omega = \begin{pmatrix}
	0 & \mathbb{I}_g \\
	-\mathbb{I}_g & 0
\end{pmatrix},
\]
where $\mathbb{I}_g$ is the $g \times g$ identity matrix. 

We now consider discrete harmonic $1$-forms and the space of periods. Throughout, the surface $S$ is endowed with a fixed orientation. The cell decomposition $(V,E,F)$ is viewed as an undirected embedded graph. We denote by $\vec E$ the set of oriented edges. Each face $\phi \in F$ is oriented consistently with the orientation of $S$, so that its boundary $\partial \phi$ is a cyclic sequence of oriented edges of the cell decomposition.

A {\em discrete $1$-form} is a function on oriented edges, $\omega: \vec{E} \to \mathbb{R}$, satisfying the antisymmetry condition $\omega_{uv} = -\omega_{vu}$ for every edge with its two opposite orientations $uv$ and $vu$. The $1$-form $\omega$ is said to be {\em closed} if, for every face $\phi \in F$, the sum over its boundary vanishes: $\sum_{uv \in \partial \phi} \omega_{uv} = 0$.

For a closed $1$-form, its {\em period} $A = (a_1, a_2, \dots, a_{2g})^t \in \mathbb{R}^{2g}$ is a column vector defined by
\[
a_k := \sum_{uv \in \gamma_k} \omega_{uv},
\]
where the sum is over oriented edges forming a cycle homotopic to $\gamma_k$. The closedness of $\omega$ implies that the period is independent of the chosen cycles. A closed $1$-form $\omega$ is said to be {\em harmonic} if its dual $1$-form $\star \omega := \mathsf{c} \omega$ is also closed on the dual cell decomposition; that is, for each $v \in V$,
\[
\sum_u \star\omega_{uv} = \sum_u \mathsf{c}_{uv} \omega_{uv} = 0,
\]
where $\mathsf{c}_{uv} = \mathsf{c}_{vu}$, and the sum is taken over all vertices $u$ adjacent to $v$. For an edge $uv$ oriented from $u$ to $v$, its dual edge is oriented from the right face of $uv$ to its left face. We denote by $A^{\star}$ the period of $\star \omega$, summed along cycles in the dual graph homologically equivalent to the homology basis. It is known that for any vector $A \in \mathbb{R}^{2g}$, there exists a unique discrete harmonic $1$-form $\omega$ with the prescribed period, and we denote $A^{\star}$ as the period of its dual $1$-form~\cite{Mercat2007,Bobenko2016}. This correspondence thus induces a linear action on the space of periods $\mathbb{R}^{2g}$. With respect to the homology basis, this linear map defines our $2g \times 2g$ period matrix $L$, satisfying
\[
A^{\star} = L A.
\]
The discrete period matrix is introduced by Mercat~\cite{Mercat2002, Mercat2007} and it is known that the matrix product $\Omega L$ is symmetric and positive definite, in accordance with the Riemann bilinear relation~\cite{Mercat2007,Bobenko2016}.

A cell-decomposed surface equipped with edge weights is regarded as a discrete Riemann surface~\cite{Mercat2001}; here the edge weight plays the role of conductance in an electric network that defines the graph Laplacian. We note that there are other ways to interpret a graph as a discrete analogue of Riemann surfaces~\cite{BakerNorine2007,Kontservich}, but we do not elaborate on them here for brevity. When the cell decomposition is a triangulation, the space of positive edge weights parametrizes the space of weighted Delaunay triangulations of Riemann surfaces with constant curvature~\cite{Lam2022,Lam2024harmonic}. The underlying circle pattern of a weighted Delaunay decomposition serves as a discrete conformal structure, generalizing William Thurston's circle packings~\cite{Stephenson2005,Glickenstein2024}. In this context, the discrete period matrix is related to the deformation space of circle patterns~\cite{Lam2021} and its Weil--Petersson symplectic form~\cite{Lam2024torus,Lam2024pull}. As the triangulations of the surface are refined, the discrete period matrices associated with geometric edge weights converge to the period matrix of the classical Riemann surface~\cite{Mercat2002,Bobenko2016,Ulrike2021}.

In this article, we shift the focus from geometric to combinatorial aspects of the discrete period matrix, allowing the edge weights to take arbitrary positive values. We show that the minors of the discrete period matrix can be expressed as weighted sums over \emph{homological quasi-trees} in the embedded $1$-skeleton graph $(V, E)$.

\begin{definition}\label{def:kquasitree}
    Given a topological embedding of a graph $f:(V,E) \to S$, we define a \emph{$k$-homological quasi-tree} $T$ to be a spanning connected subgraph with $|V|-1+k$ edges such that the induced homomorphism on homology $f_*: H_1(T;\mathbb{Z}) \to H_1(S;\mathbb{Z})$ is injective. 
    
\end{definition}

For brevity, a ($k$-)homological quasi-tree is simply called a ($k$-)quasi-tree for the rest of this article. Observe that $0$-quasi-trees correspond exactly to the spanning trees of the graph $(V,E)$. For a closed surface $S$ of genus $g$, the first homology group satisfies $H_1(S;\mathbb{Z}) \cong \mathbb{Z}^{2g}$. Therefore, it suffices to consider $k$-quasi-trees with $1 \leq k \leq 2g$. By definition, every cycle in a quasi-tree $T$ is non-null-homologous in $S$ and therefore neither contractible nor separating.

We associate intersection matrices to every embedded subgraph. To simplify notation, we denote $\gamma^1, \gamma^2, \dots, \gamma^{2g}$ a collection of oriented simple loops in $S$ satisfying the Kronecker delta relation $\iota(\gamma^i,\gamma_j) = \delta_{ij}$. For example, we may take \(\gamma^i=-\gamma_{i+g}\) for \(1\le i\le g\) and \(\gamma^i=\gamma_{i-g}\) for \(g+1\le i\le 2g\), but we will also use alternative choices arising from natural constructions (for instance, from the dual graph). Any such choice yields the same intersection matrices, since the intersection pairing depends only on the homology classes. 

For any connected subgraph $T$, we consider its cellular homology $H_1(T;\mathbb{Z})$. Let $\mu_1, \dots, \mu_k$ form a basis of $H_1(T;\mathbb{Z})$. Let $I \subset \{1, 2, \dots, 2g\}$ with $|I| = k$. Write $I = \{i_1, i_2, \dots, i_k\}$ with $i_1 < i_2 < \dots < i_k$, and define the $k \times k$ \emph{intersection matrix} $\mathcal{T}_{T,I}$ by
\[
(\mathcal{T}_{T,I})_{r,s} := \iota(\gamma^{i_r}, f_*\mu_s).
\]
We write $\mathcal{T}_{I}$ for $\mathcal{T}_{T,I}$ when the dependence on $T$ is clear from the context, and adopt the convention $\det \mathcal{T}_\emptyset = 1$.

For $I, J \subset \{1, 2, \dots, 2g\}$ with $|I| = |J| = k$, the quantity
\[
\det \mathcal{T}_{I} \cdot \det \mathcal{T}_{J}
\]
is a well-defined invariant of the subgraph $T$, independent of the choice of ordered basis of $H_1(T;\mathbb{Z})$.

For a $k$-quasi-tree $T$, its first homology $H_1(T;\mathbb{Z})$ has rank $k$ and it is associated with intersection matrices of size $k \times k$.

\begin{theorem} \label{thm:HQT_count}
Given a cell decomposition $(V,E,F)$ of a closed oriented surface $S$ of genus $g$, we have an embedding of the $1$-skeleton graph $f\colon (V,E) \to S$. Let $I, J \subset \{1,2,\dots,2g\}$ be subsets of size $k$, where $1 \leq k \leq 2g$. Then the determinant of the submatrix $(\Omega L)_{I,J}$ satisfies
\begin{align} \label{eq:Lminor}
\det(\Omega L)_{I,J} = 
\frac{
\sum_{\text{$k$-quasi-tree } T} \left( \det \mathcal{T}_{I} \cdot \det \mathcal{T}_{J} \cdot \prod_{e \in T} \mathsf{c}_e \right)
}{
\sum_{\text{spanning tree } T} \prod_{e \in T} \mathsf{c}_e 
}.
\end{align}
\end{theorem}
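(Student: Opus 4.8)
The plan is to reduce \eqref{eq:Lminor} to the weighted matrix--tree theorem and the Cauchy--Binet formula, through a Schur-complement presentation of the period matrix. First I would record a convenient description of $\Omega L$: among all closed discrete 1-forms with a prescribed period vector $A\in\mathbb{R}^{2g}$, the harmonic representative minimizes the Dirichlet energy $\sum_{e\in E}\mathsf{c}_e\omega_e^2$, and this minimum equals $A^t(\Omega L)A$ (the discrete Riemann bilinear relation \cite{Mercat2007,Bobenko2016}); polarizing, $\Omega L$ is the Gram matrix, in the conductance inner product, of the harmonic 1-forms $\omega_{(1)},\dots,\omega_{(2g)}$ whose periods are the standard basis vectors. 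Then pick closed 1-forms $\eta_1,\dots,\eta_{2g}$ on $(V,E,F)$ realizing these periods (possible since the period map on closed 1-forms is onto), take them as the columns of an $|E|\times 2g$ matrix $E$, let $B$ be the signed vertex--edge incidence matrix, $B_0$ its submatrix with the row of a fixed vertex $v_0$ removed, write $\mathsf{c}$ also for the diagonal matrix of edge weights, and set $\Delta_0 := B_0\mathsf{c} B_0^t$ (the reduced graph Laplacian). Writing $\omega_{(j)} = \eta_j-(\text{a coboundary})$ and solving the Laplace equation that makes $\omega_{(j)}$ co-closed, one gets
\[
\Omega L = E^t\mathsf{c} E - (B_0\mathsf{c} E)^t\,\Delta_0^{-1}\,(B_0\mathsf{c} E),
\]
i.e.\ $\Omega L$ is the Schur complement of $\Delta_0$ in $\mathcal M := \left(\begin{smallmatrix}B_0\\ E^t\end{smallmatrix}\right)\mathsf{c}\left(\begin{smallmatrix}B_0^t & E\end{smallmatrix}\right)$.

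From here I would pass to minors. The Schur-complement identity for minors gives, with $|I|=|J|=k$,
\[
\det(\Omega L)_{I,J}\cdot\det\Delta_0 = \det\mathcal M_{I,J},\qquad \mathcal M_{I,J} := \left(\begin{smallmatrix}B_0\\ (E_{\cdot,I})^t\end{smallmatrix}\right)\mathsf{c}\left(\begin{smallmatrix}B_0^t & E_{\cdot,J}\end{smallmatrix}\right),
\]
where $E_{\cdot,I}$ denotes the submatrix of $E$ with columns indexed by $I$. The weighted matrix--tree theorem gives $\det\Delta_0 = \sum_{\text{spanning tree }T}\prod_{e\in T}\mathsf{c}_e$, the denominator of \eqref{eq:Lminor}, and the Cauchy--Binet formula expands
\[
\det\mathcal M_{I,J} = \sum_{\substack{R\subseteq E\\ |R|=|V|-1+k}}\Big(\prod_{e\in R}\mathsf{c}_e\Big)\,\det\left(\begin{smallmatrix}B_0|_R\\ (E_{R,I})^t\end{smallmatrix}\right)\det\left(\begin{smallmatrix}B_0|_R\\ (E_{R,J})^t\end{smallmatrix}\right),
\]
where $B_0|_R$ (resp.\ $E_{R,I}$) keeps the columns (resp.\ rows) indexed by $R$. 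It remains to show that, for each such $R$, the product of the two square minors equals $\det\mathcal T_I\cdot\det\mathcal T_J$ when $(V,R)$ is a $k$-quasi-tree and $0$ otherwise.

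This is the heart of the argument. The key identity is that for any 1-cycle $\mu$ of the subgraph $(V,R)$ one has $\langle\eta_i,\mu\rangle = \iota(\gamma^i,f_*\mu)$, because $\langle\eta_i,\cdot\rangle$ descends to $H_1(S;\mathbb{R})$ and agrees there with $\gamma^i$. Hence the right kernel of $\left(\begin{smallmatrix}B_0|_R\\ (E_{R,I})^t\end{smallmatrix}\right)$ consists of the 1-cycles $\mu$ of $(V,R)$ with $\iota(\gamma^i,f_*\mu)=0$ for all $i\in I$ (the top block forces $B_0|_R\mu=0$, which together with the column-sum relations of $B$ gives $B|_R\mu=0$, so $\mu$ is a cycle), and this determinant vanishes unless $(V,R)$ is connected and the map $H_1((V,R);\mathbb{Z})\to\mathbb{Z}^{I}$, $\mu\mapsto(\iota(\gamma^i,f_*\mu))_{i\in I}$, is injective --- that is, unless $(V,R)$ is a $k$-quasi-tree with $\det\mathcal T_I\neq 0$. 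When $(V,R)$ is connected I would choose a spanning tree $T_0\subseteq R$ (automatically spanning in $(V,E)$) and the associated fundamental-cycle basis $\mu_1,\dots,\mu_k$ of $H_1((V,R);\mathbb{Z})$; block elimination against the unimodular block $B_0|_{T_0}$ identifies the Schur complement of that block with the $k\times k$ matrix $(\iota(\gamma^{i_r},f_*\mu_s))_{r,s} = \mathcal T_I$, yielding $\det\left(\begin{smallmatrix}B_0|_R\\ (E_{R,I})^t\end{smallmatrix}\right) = \det(B_0|_{T_0})\cdot\det\mathcal T_I$ and likewise with $J$. Since $\det(B_0|_{T_0})=\pm1$ and the \emph{same} tree $T_0$ and the same ordering of $R$ serve both factors, the product of the two $R$-minors equals $\det\mathcal T_I\cdot\det\mathcal T_J$, which is exactly the basis-independent quasi-tree invariant of the statement (and vanishes when $(V,R)$ is not a $k$-quasi-tree). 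Substituting back and dividing through by $\det\Delta_0$ yields \eqref{eq:Lminor}.

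I expect the last step to be the main obstacle: verifying that edge sets $R$ which are disconnected (or connected but not $k$-quasi-trees) contribute $0$, and --- more delicately --- tracking signs so that the product of the $I$-minor and the $J$-minor reproduces the correctly \emph{signed} invariant $\det\mathcal T_I\cdot\det\mathcal T_J$ rather than merely $|\det\mathcal T_I|\cdot|\det\mathcal T_J|$; this is precisely why the block elimination must be run simultaneously for both index sets using a single common spanning tree $T_0\subseteq R$. A lesser issue is to make the energy/Gram identification in the first step precise with the orientation and sign conventions used for the dual periods, although this is essentially the known discrete Riemann bilinear relation.
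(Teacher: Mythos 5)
Your proposal is correct and follows essentially the same route as the paper: the Schur-complement presentation of $\Omega L$ (the paper's Proposition~\ref{prop:Lop}), the minor identity $\det\Delta_{\bar o\bar o}\det(\Omega L)_{I,J}=\det\bigl(\begin{smallmatrix}d_{\bar o}& M_I\end{smallmatrix}\bigr)^t\mathsf{C}\bigl(\begin{smallmatrix}d_{\bar o}& M_J\end{smallmatrix}\bigr)$, Cauchy--Binet over $(|V|-1+k)$-edge subsets, and identification of each nonzero term via a common spanning tree $T_0\subseteq R$ with the invariant $\det\mathcal T_I\cdot\det\mathcal T_J$. The only cosmetic difference is that you eliminate the unimodular block $d_{T_0,\bar o}$ by a Schur complement, whereas the paper achieves the same block-triangular form by replacing each $\gamma^i$ with a homologous representative disjoint from $T_0$; these are the same column operations.
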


We note that while a $k$-quasi-tree may not contribute to the sum for every $I,J$, it always contributes to some pair of $I,J$.

This formula confirms that $\Omega L$ is positive definite and symmetric. Indeed, when $I=J$, we have $\det(\Omega L)_{I,J} >0$ and hence the principal minors have positive determinants. The symmetry of the matrix follows from the case $k=1$, where every $1$-quasi-tree $T$ has an oriented loop $\mu$ unique up to orientation:  
\begin{corollary}\label{cor:k1}
For every $i,j \in \{1,2,\dots,2g\}$, we have
\[
(\Omega L)_{i,j} = \frac{\sum_{\text{$1$-quasi-tree } T} \left(\iota(\gamma^i,f_* \mu) \cdot \iota(\gamma^j,f_* \mu) \cdot \prod_{e \in T} \mathsf{c}_{e}\right)}{\sum_{\text{spanning tree } T} \prod_{e \in T} \mathsf{c}_{e}} ,
\]
Here, $\mu$ is the unique (up to orientation) homology generator of the $1$-quasi-tree $T$.
\end{corollary}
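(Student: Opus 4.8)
The plan is to deduce Corollary~\ref{cor:k1} from Theorem~\ref{thm:HQT_count} by specializing to $k=1$, $I=\{i\}$, $J=\{j\}$; the only work is the bookkeeping that turns the $k=1$ instance of \eqref{eq:Lminor} into the stated formula, together with reading off the symmetry.

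First I would pin down the topology of a $1$-quasi-tree $T$. By Definition~\ref{def:kquasitree} it is a spanning connected subgraph with $|V|$ edges, so its first Betti number is $|E(T)|-|V|+1=1$; equivalently $T$ is a spanning tree with one additional edge, and $H_1(T;\mathbb{Z})\cong\mathbb{Z}$, generated by the homology class of the unique simple cycle of $T$. Fix such a generator $\mu$; it is unique up to sign. The rank-$1$ clause of the definition says exactly that $f_*\mu\neq 0$ in $H_1(S;\mathbb{Z})$, i.e.\ the image cycle is non-contractible on $S$, consistent with the remark following the definition.

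Next I would unwind the invariant $\det\mathcal{T}_I\cdot\det\mathcal{T}_J$ in this case. With $k=1$ the matrix $\mathcal{T}_{\{i\}}$ is the $1\times 1$ matrix whose single entry is $\iota(\gamma^{i},f_*\mu)$, so $\det\mathcal{T}_{\{i\}}=\iota(\gamma^{i},f_*\mu)=\gamma^i(f_*\mu)$, and likewise $\det\mathcal{T}_{\{j\}}=\gamma^j(f_*\mu)$. Their product $\gamma^i(f_*\mu)\,\gamma^j(f_*\mu)$ is insensitive to replacing $\mu$ by $-\mu$, since both factors change sign, so it is a well-defined invariant of $T$, as asserted in the discussion preceding the theorem. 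Since $(\Omega L)_{\{i\},\{j\}}$ is just the scalar entry $(\Omega L)_{i,j}$ and the denominator in \eqref{eq:Lminor} is the same spanning-tree generating function, substituting $k=1$, $I=\{i\}$, $J=\{j\}$ yields precisely the displayed identity of the corollary.

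Finally, symmetry is immediate: the right-hand side is manifestly invariant under interchanging $i$ and $j$, hence $(\Omega L)_{i,j}=(\Omega L)_{j,i}$; together with positivity of the $I=J$ principal minors (the diagonal case of Theorem~\ref{thm:HQT_count}) this recovers that $\Omega L$ is symmetric and positive definite. I do not anticipate a genuine obstacle within the corollary itself, since all the substance sits in Theorem~\ref{thm:HQT_count} --- whose own proof I would expect to run through a matrix-tree-type Cauchy--Binet expansion of the discrete harmonic conjugation operator, with the delicate point being the control of the homological factors $\det\mathcal{T}_I$ and $\det\mathcal{T}_J$ under that expansion. For $k=1$ none of that machinery is needed beyond invoking the statement.
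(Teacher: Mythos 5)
Your proposal is correct and matches the paper's own derivation: the corollary is obtained exactly by specializing Theorem~\ref{thm:HQT_count} to $k=1$, $I=\{i\}$, $J=\{j\}$, identifying the $1\times 1$ determinants $\det\mathcal{T}_{\{i\}}=\iota(\gamma^i,f_*\mu)$ and noting that the product is independent of the sign of the generator $\mu$. (The paper also sketches a second, independent route via Theorem~\ref{thm:Hess} and cycle-rooted spanning forests, but that is presented only as an alternative.)
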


Our theorem is closely related to Kirchhoff’s Matrix-Tree Theorem, which establishes a fundamental connection between combinatorial structures and determinantal formulas. The Matrix-Tree Theorem and its variations plays an important role in many areas that involve either of these subjects, including statistical mechanics and two-dimensional conformal field theory~\cite{Benjamin2001,Lawler2004}. These areas also provide part of the motivation for this work.

Given a graph $(V,E)$ equipped with edge weights, the \emph{discrete Laplacian} is the linear operator $\Delta \colon \mathbb{R}^V \to \mathbb{R}^V$ defined as follows. For $h \in \mathbb{R}^V$, the image $\Delta h \in \mathbb{R}^V$ is the function whose value at $v \in V$ is
\[
(\Delta h)_v := \sum_{uv} \mathsf{c}_{uv}\,(h_v - h_u),
\]
where the sum ranges over all edges $uv$ incident to $v$. Here, the graph is not necessarily realized on a surface, and multiple edges as well as loop edges are allowed. The discrete Laplacian acts on functions defined at the vertices and serves as a discrete analogue of the Laplace–Beltrami operator. 
The Matrix-Tree Theorem states that the product of the nonzero eigenvalues of $\Delta$, denoted $\det' \Delta$, equals the weighted sum over all spanning trees of the graph:
\[
\mbox{det}' (\Delta)= \sum_{\text{spanning tree } T}  \left(  \prod_{e \in T} \mathsf{c}_{e} \right),
\]
which appears in the denominator of Equation~\eqref{eq:Lminor}.

Particularly for planar graphs, there is a well-known bijection between spanning trees in the graph and spanning trees in the dual graph. This bijection extends naturally to homological quasi-trees on surfaces, and their homological invariants admit a natural correspondence (see Section~\ref{sec:duality}). For the rest of this article, given $E_0\subset E$, we denote by $E_0^\star:=\{e^\star:e\in E_0\}$ the set of dual edges corresponding to the primal edges in $E_0$. The matrix $\mathcal{T}^\star_{\overline{I}}$ appearing below is defined analogously to $\mathcal{T}_{I}$, whose precise definition is given in Section~\ref{sec:cellemb}.

\begin{theorem}\label{thm:bijection}
    For every integer $k \in [0,2g]$, the complement of a $k$-quasi-tree $T$ in the primal graph yields a $(2g-k)$-quasi-tree ${\overline{T}^{\star}}=\{e^\star:e\in E\setminus T\}$ in the dual graph. Furthermore, for any $I\subset \{1,2,\dots,2g\}$ with $|I|=k$, we have 
    \[
    \det \mathcal{T}_I = \pm\det \mathcal{T}^\star_{\overline{I}},
    \]
    where $\overline{I}:=\{1,2,\dots,2g\}\setminus I$. Upon fixing the ordering of $I,J,\overline{I},\overline{J}$, the multiplicative factor $\pm 1$ relating the two quantities $\det \mathcal{T}_I \det \mathcal{T}_J$ and $\det \mathcal{T}^\star_{\overline{I}}\det \mathcal{T}^\star_{\overline{J}}$ is independent of $T$.
\end{theorem}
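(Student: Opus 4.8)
The plan is to treat the three assertions of Theorem~\ref{thm:bijection} in turn.

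\emph{Step 1: $T^\star$ is a $(2g-k)$-quasi-tree.} The edge count is forced by Euler's relation $|V|-|E|+|F|=2-2g$: a spanning connected $T$ with $|V|-1+k$ edges has $|E\setminus T|=|E|-|V|+1-k=|F|-1+(2g-k)$, and once $T^\star$ is shown to be spanning and connected its first Betti number is $|E\setminus T|-|F|+1=2g-k$. The key point is that $T$ being a quasi-tree makes $f_*\colon H_1(T;\mathbb Z)\to H_1(S;\mathbb Z)$ injective (as noted after Definition~\ref{def:kquasitree}), and since $T$ is a graph, $H_1(T;\mathbb Z)=Z_1(T)$ has no boundaries; hence \emph{no nonzero $1$-cycle of $G$ supported on edges of $T$ is null-homologous in $S$}. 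Since moreover the fundamental class $[S]=\sum_{\phi\in F}\phi$ has every coefficient $\pm1$ (so no proper nonempty set of faces supports a closed $2$-chain), I obtain: $T^\star$ is spanning, since a face all of whose boundary edges lie in $T$ would make its boundary chain $\partial_2\phi$ such a cycle; $T^\star$ is connected, since a partition $F=\mathcal F_1\sqcup\mathcal F_2$ into nonempty parts with no $T^\star$-edge between them would make $\partial_2\!\big(\sum_{\phi\in\mathcal F_1}\phi\big)$ such a cycle; and $f^\star_*\colon H_1(T^\star;\mathbb Z)\to H_1(S;\mathbb Z)$ is injective, since a null-homologous $1$-cycle of $G^\star$ belongs to the cut space of $G$, i.e.\ its value on each edge $uv$ has the form $c_v-c_u$ for some $c\colon V\to\mathbb Z$, and being supported on $\{e^\star:e\notin T\}$ forces $c$ to agree across every edge of $T$, hence (as $(V,T)$ is connected spanning) to be constant, so the cycle vanishes. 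The cases $|F|=1$ and $|V|=1$ are trivial, and for $|F|\ge 2$ the fundamental-class remark also excludes degenerate faces with $\partial_2\phi=0$.

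\emph{Step 2: the minor identity.} The geometric heart is that the sublattices $\Lambda:=f_*H_1(T;\mathbb Z)$ and $\Lambda^\star:=f^\star_*H_1(T^\star;\mathbb Z)$ of $H_1(S;\mathbb Z)$ are orthogonal complements for the intersection form $\iota$. Orthogonality holds because a $1$-cycle of $T$ can be realized on $S$ inside $\bigcup_{e\in T}e$ and a $1$-cycle of $T^\star$ inside $\bigcup_{e\notin T}e^\star$, and those two subsets of $S$ are disjoint (an edge meets only its own dual), so all intersection numbers vanish; since $k+(2g-k)=2g$ and $\iota$ is nondegenerate, over $\mathbb R$ these are full $\iota$-orthogonal complements. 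Both lattices are moreover \emph{saturated}: by Lefschetz--Poincar\'e duality on $S$ (applied to $G$, respectively $G^\star$) one has $H_1(S;\mathbb Z)/\Lambda\cong H_1(S,T;\mathbb Z)\cong H_1(S\setminus T;\mathbb Z)$, which is free, and similarly for $\Lambda^\star$; hence $\Lambda^\star$ is the full integral $\iota$-orthogonal complement of $\Lambda$. Now I pass to coordinates. Because $\iota(\gamma^i,v)$ is the $i$-th coordinate of $v$ in the basis $\gamma_1,\dots,\gamma_{2g}$, the numbers $\det\mathcal T_I$ are the maximal minors of a $\mathbb Z$-basis matrix $M$ of $\Lambda$. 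Equipping $G^\star$ with the dual symplectic basis $\gamma^1,\dots,\gamma^{2g}$ (its intersection matrix is again $\Omega$, and its own $\iota$-dual basis is $-\gamma_1,\dots,-\gamma_{2g}$), the numbers $\det\mathcal T^\star_J$ are, up to the overall sign $(-1)^{2g-k}$, the maximal minors of $\Omega N$, where $N$ is a $\mathbb Z$-basis matrix of $\Lambda^\star$. Since $\Omega$ is orthogonal ($\Omega^t\Omega=I$), one checks $\Omega\Lambda^\star=\Lambda^{\perp}$, the ordinary Euclidean integral orthogonal complement of $\Lambda$; so $\Omega N$ is a $\mathbb Z$-basis matrix of $\Lambda^{\perp}$, and the classical Hodge-star complementation for the maximal minors of a lattice and of its orthogonal complement gives $\det((\Omega N)_{I_c})=\mu\, s(I)\,\det M_I$, where $s(I)\in\{\pm1\}$ is the shuffle sign of $(I,I_c)$ and the scalar $\mu$ is the same for all $I$. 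Primitivity of both minor vectors --- which is exactly what saturation provides --- forces $\mu=\pm1$, hence $\det\mathcal T_I=\pm\det\mathcal T^\star_{I_c}$.

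\emph{Step 3: independence of $T$.} Writing Step~2 as $\det\mathcal T^\star_{I_c}=\varepsilon(T)\,s(I)\,\det\mathcal T_I$, where $s(I)=\pm1$ depends only on $I$ and $\varepsilon(T)=\pm1$ absorbs $(-1)^{2g-k}$, the sign $\mu$, and the chosen $\mathbb Z$-bases of $H_1(T;\mathbb Z)$ and $H_1(T^\star;\mathbb Z)$ --- but not $I$ --- we get
\[
\det\mathcal T^\star_{I_c}\,\det\mathcal T^\star_{J_c}=\varepsilon(T)^2\,s(I)\,s(J)\,\det\mathcal T_I\,\det\mathcal T_J=s(I)\,s(J)\,\det\mathcal T_I\,\det\mathcal T_J ,
\]
and $s(I)s(J)$ depends only on the ordered sets $I,J$, not on $T$. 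Both products are already independent of the basis choices, so no ambiguity remains.

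I expect the main obstacle to be the index bookkeeping in Step~2: $\iota$-duality by itself would relate $\det\mathcal T_I$ to the minor indexed by the symplectic transpose of $I_c$, and it is the use of the dual symplectic basis $\gamma^1,\dots,\gamma^{2g}$ on $G^\star$, together with $\Omega^t\Omega=I$, that replaces the $\iota$-orthogonal complement by the Euclidean one and makes the complementary index come out as $I_c$ on the nose. A secondary subtlety is that one must invoke the \emph{saturation} of $f_*H_1(T;\mathbb Z)$ --- a duality statement about the surface --- rather than merely its injectivity into $H_1(S;\mathbb Z)$, in order to pin the proportionality constant to $\pm1$.
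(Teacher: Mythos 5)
Your proof is correct, but it reaches the minor identity by a genuinely different route than the paper. The paper first establishes connectivity of $T^\star$ by the same ``boundary of a proper set of faces'' argument you use, then chooses a tree--cotree decomposition $T_0\sqcup R\sqcup C_0$ with $T_0\subset T$, $C_0\subset E\setminus T$, contracts $T_0$ and deletes $C_0$ to reduce to a one-vertex, one-face graph with $2g$ edges, and there forms the unimodular intersection matrix $\mathcal{R}$ (with entries $\iota(\gamma^i,e_j)$) and its inverse-transpose $\mathcal{R}^\star$; the identity $\det\mathcal{T}_I=\pm\det\mathcal{T}^\star_{I_c}$ is then exactly the Jacobi identity relating complementary minors of a matrix and of its adjugate. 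You instead stay inside $H_1(S;\mathbb{Z})$, prove that $\Lambda=f_*H_1(T;\mathbb{Z})$ and $\Lambda^\star=f^\star_*H_1(T^\star;\mathbb{Z})$ are \emph{saturated}, mutually $\iota$-orthogonal sublattices of complementary rank, convert $\iota$-orthogonality to Euclidean orthogonality via $\Omega$, and invoke Pl\"ucker/Hodge-star complementation together with primitivity of the maximal-minor vectors of saturated lattices. This is essentially the coordinate-free form of the same Jacobi identity, but it makes the geometric content explicit and isolates precisely where integrality is needed; the price is that you must prove saturation, which the paper gets for free from the unimodularity of $\mathcal{R}$ (Proposition~\ref{prop:MHQT_TU}, via Lemma~\ref{lem:TLC_homology}) and which the paper only confronts head-on in the special case treated in Section~7. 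Your identification of saturation as the crux is exactly right. Two small points: your duality statement should read $H_1(S,T;\mathbb{Z})\cong H^1(S\setminus T;\mathbb{Z})$ (Lefschetz duality lands in cohomology of the complement), though since $S\setminus T$ deformation retracts onto the graph $T^\star$ both groups are free of the same rank, so freeness --- which is all you use --- survives; and in Step~1 your injectivity argument for $f^\star_*$ via the cut space of $G$ is a clean alternative to the paper's appeal to Lemma~\ref{lem:TLC_homology}, with Euler's formula supplying the rank count that the paper extracts from the tree--cotree decomposition.
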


It follows that $2g$-quasi-trees of the primal graph are in bijection with spanning trees of the dual graph. Combining this with Theorem~\ref{thm:HQT_count} yields an alternative expression for the determinant of $\Omega L$.

\begin{corollary}\label{cor:det}
    \begin{align*}
		\det(\Omega L)= \frac{\sum_{\text{dual spanning tree } {\overline{T}^{\star}}}  \left( \prod_{e \in {\overline{T}^{\star}}} \mathsf{c}^{-1}_{e} \right)}{\sum_{\text{spanning tree } T}  \left(  \prod_{e \in T} \mathsf{c}_{e} \right)} \cdot \prod_{e \in E} \mathsf{c}_{e},
    \end{align*}
    where the denominator is the sum over spanning trees $T$ of the primal graph, while the numerator is the sum over spanning trees ${\overline{T}^{\star}}$ of the dual graph.
\end{corollary}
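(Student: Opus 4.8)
\textbf{Proof proposal for Corollary~\ref{cor:det}.}
The plan is to obtain Corollary~\ref{cor:det} as the top-degree case $k = 2g$ of Theorem~\ref{thm:HQT_count} together with the primal--dual correspondence of Theorem~\ref{thm:bijection}. First I would set $I = J = \{1, 2, \dots, 2g\}$ in \eqref{eq:Lminor}. Then $(\Omega L)_{I,J}$ is the full matrix $\Omega L$, so the left-hand side is $\det(\Omega L)$, the numerator becomes a sum over all $2g$-quasi-trees $T$, and the denominator is already the spanning-tree generating function appearing in Corollary~\ref{cor:det}.

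Next I claim that $\det \mathcal{T}_I \cdot \det \mathcal{T}_J = 1$ for every $2g$-quasi-tree $T$. Since $I = J$, this reduces to $\det \mathcal{T}_I = \pm 1$, i.e. to the statement that the images $f_* \mu_1, \dots, f_* \mu_{2g}$ of a basis of $H_1(T;\mathbb{Z})$ generate all of $H_1(S;\mathbb{Z})$, not merely a finite-index sublattice. This is exactly Theorem~\ref{thm:bijection} applied with $I_c = \emptyset$: the complement $T^\star$ is a $(2g - 2g) = 0$-quasi-tree of the dual graph, hence a dual spanning tree, and $\det \mathcal{T}_I = \pm \det \mathcal{T}^\star_{I_c} = \pm 1$ since $\mathcal{T}^\star_{I_c}$ is the empty $0 \times 0$ matrix. (Alternatively one checks this from Euler's formula: a $2g$-quasi-tree has $|V|-1+2g = |E|-|F|+1$ edges, so its complement has $|F|-1$ edges and is a spanning tree of the dual graph.) With this, the numerator of \eqref{eq:Lminor} collapses to $\sum_{2g\text{-quasi-tree } T} \prod_{e \in T} \mathsf{c}_e$.

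Finally I would rewrite this sum over $2g$-quasi-trees as a sum over dual spanning trees using the bijection $T \mapsto T^\star = \{e^\star : e \in E \setminus T\}$ of Theorem~\ref{thm:bijection}. For each $T$ one has $\prod_{e \in T} \mathsf{c}_e = \big( \prod_{e \in E} \mathsf{c}_e \big) \cdot \prod_{e \in T^\star} \mathsf{c}_e^{-1}$, whence
\[
\sum_{2g\text{-quasi-tree } T} \prod_{e \in T} \mathsf{c}_e \;=\; \Big( \prod_{e \in E} \mathsf{c}_e \Big) \sum_{\text{dual spanning tree } T^\star} \prod_{e \in T^\star} \mathsf{c}_e^{-1},
\]
and dividing by the denominator of \eqref{eq:Lminor} gives the claimed identity. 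The argument is essentially bookkeeping once Theorems~\ref{thm:HQT_count} and~\ref{thm:bijection} are in hand; the only point that genuinely uses input beyond substitution is the normalization $\det \mathcal{T}_I \cdot \det \mathcal{T}_J = 1$, which is where the Euler-characteristic matching of complements of $2g$-quasi-trees with dual spanning trees (equivalently, Theorem~\ref{thm:bijection} with empty index set) enters.
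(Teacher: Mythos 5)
Your proposal is correct and follows essentially the same route as the paper: specialize Theorem~\ref{thm:HQT_count} to $k=2g$, $I=J=\{1,\dots,2g\}$, use that $\det\mathcal{T}_I=\pm1$ for $2g$-quasi-trees (the paper's Proposition~\ref{prop:MHQT_TU}, equivalently the $I_c=\emptyset$ case of Theorem~\ref{thm:bijection}), and convert the sum over $2g$-quasi-trees into a sum over dual spanning trees via complementation and the weight identity $\prod_{e\in T}\mathsf{c}_e=\bigl(\prod_{e\in E}\mathsf{c}_e\bigr)\prod_{e\in E\setminus T}\mathsf{c}_e^{-1}$. Only note that your parenthetical Euler-characteristic check gives the right edge count for the complement but not by itself that it is a dual spanning tree; that requires the connectivity statement of Proposition~\ref{prop:HQT_duality}, which your main argument via Theorem~\ref{thm:bijection} already supplies.
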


Given an embedded graph on a surface, the counting of subgraphs has appeared in other settings. A setting closely related to ours is the Laplacian for a flat $\mathbb{C}$-bundle, where the holonomy along  $\gamma_1,\dots, \gamma_{2g}$ has multiplicative factors $z_1,z_2, \dots z_{2g} \in \mathbb{C}^*$. The bundle Laplacian $\Delta(z_1,\dots,z_{2g})$ is a $V \times V$ matrix depending on the variables $z_1,\dots,z_{2g}$. When $z_1=z_2=\dots=z_{2g}=1$, the bundle Laplacian becomes the usual discrete Laplacian. Associated to the bundle Laplacian, one considers a polynomial
\begin{align*}
	P(z_1,\dots,z_{2g}):= \det \Delta(z_1,\dots,z_{2g}).
\end{align*} 
The polynomial is a rich algebraic object and invariant under cluster mutations \cite{GK2013,KLRR2022}. It is interesting to determine how the polynomial $P$ encodes the discrete Riemann surface up to cluster mutations~\cite{Boutillier2023,George2024}. When the surface is a torus, i.e.\ $g=1$, the algebraic curve $\mathcal{C} := \{(z_1,z_2) \in (\mathbb{C}^2 \setminus \{0,0\}) : P(z_1,z_2) = 0\}$ is known to be a Harnack curve, with fruitful implications in statistical mechanics~\cite{KOS2006}. For arbitrary genus, the polynomial $P$ is known to count cycle-rooted spanning forests in the embedded graph~\cite{Forman1993,Kenyon2011}. At $z_1=z_2=\dots=z_{2g}=1$, one can readily verify that the polynomial $P$ and its first derivatives vanish. We denote by $ \operatorname{Hess}(P)$ the Hessian matrix of $P$, namely
\[
(\operatorname{Hess}(P))_{i,j} =  \frac{\partial^2 P}{\partial z_i \partial z_j}. 
\]

\begin{theorem}\label{thm:Hess}
	The Hessian of the polynomial $P(z_1, z_2, \dots, z_{2g})$ associated with the bundle Laplacian, evaluated at $z_1 = z_2 = \dots = z_{2g} = 1$, is related to the period matrix via
	\[
	\left. \operatorname{Hess}(P) \right|_{z_1 = \dots = z_{2g} = 1} = -2 \left( \sum_{\text{spanning tree } T} \prod_{e \in T} \mathsf{c}_e \right) \Omega L.
	\]
\end{theorem}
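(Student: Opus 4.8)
The plan is to expand $P=\det\Delta(z_1,\dots,z_{2g})$ by a twisted matrix-tree formula and read off its quadratic Taylor part at $z=\mathbf 1$. For each $i$, fix an integral cellular $1$-cocycle $\eta_i$ on $(V,E)$ whose cohomology class, regarded as a functional on $H_1(S;\mathbb Z)$, equals $\gamma^i$; equivalently $\langle\eta_i,\gamma_k\rangle:=\sum_{uv\in\gamma_k}(\eta_i)_{uv}=\iota(\gamma^i,\gamma_k)=\delta_{ik}$, and such $\eta_i$ exist because $\{\gamma_k\}$ is a $\mathbb Z$-basis of $H_1(S;\mathbb Z)$. The flat $\mathbb{C}^*$-connection with edge holonomies $\phi_{uv}(z):=\prod_i z_i^{(\eta_i)_{uv}}$ then has monodromy $z_k$ around $\gamma_k$, so it realizes the bundle Laplacian: $\Delta(z_1,\dots,z_{2g})=\Delta_\phi$, with $(\Delta_\phi h)_v=\sum_{uv}\mathsf c_{uv}(h_v-\phi_{uv}h_u)$. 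Writing $\partial_\phi$ for the twisted incidence matrix --- the column of an oriented edge $e=uv$ has entry $1$ in row $v$ and $-\phi_{uv}$ in row $u$ --- one has $\Delta_\phi=\partial_{\bar\phi}\,\mathsf C\,\partial_\phi^{t}$ with $\bar\phi:=\phi^{-1}$ and $\mathsf C=\operatorname{diag}(\mathsf c_e)$, so that Cauchy--Binet gives
\[
P(z)=\sum_{\substack{F\subseteq E\\|F|=|V|}}\Big(\prod_{e\in F}\mathsf c_e\Big)\det\!\big((\partial_{\bar\phi})_F\big)\det\!\big((\partial_\phi)_F\big),
\]
where $(\partial_\phi)_F$ is the $|V|\times|V|$ submatrix on the column set $F$.

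By the classical twisted matrix-tree identity (Forman, Kenyon \cite{Forman1993,Kenyon2011}), $\det((\partial_\phi)_F)$ vanishes unless $F$ is a cycle-rooted spanning forest --- a spanning subgraph all of whose components are unicyclic --- and for such $F$, once the sign and monomial ambiguities of the individual determinants are accounted for, $\det((\partial_{\bar\phi})_F)\det((\partial_\phi)_F)=\prod_C\big(2-h_C(z)-h_C(z)^{-1}\big)$, the product over the cycles $C$ of $F$, where $h_C(z)=\prod_i z_i^{\langle\eta_i,C\rangle}$ is the monodromy around $C$ with $\langle\eta_i,C\rangle:=\sum_{uv\in C}(\eta_i)_{uv}$. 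Each factor $2-h_C-h_C^{-1}$ vanishes together with its gradient at $z=\mathbf 1$ --- indeed $\partial_{z_j}h_C|_{\mathbf 1}=\langle\eta_j,C\rangle=-\partial_{z_j}h_C^{-1}|_{\mathbf 1}$ --- which in particular recovers the stated vanishing of $P$ and $\nabla P$ at $z=\mathbf 1$. Since $|F|=|V|$ forces every contributing $F$ to contain at least one cycle, the Leibniz rule shows that an $F$ with two or more cycles contributes $0$ to $\operatorname{Hess}(P)|_{\mathbf 1}$, whereas an $F$ with a single cycle $C_F$ --- equivalently a connected spanning unicyclic subgraph, i.e.\ a spanning tree together with one extra edge --- contributes the Hessian of the single factor $2-h_{C_F}-h_{C_F}^{-1}$, which a short computation evaluates to $-2\langle\eta_i,C_F\rangle\langle\eta_j,C_F\rangle$. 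Hence
\[
\big(\operatorname{Hess}(P)\big)_{i,j}\big|_{\mathbf 1}=-2\!\!\sum_{\substack{F\text{ connected}\\\text{spanning unicyclic}}}\!\!\langle\eta_i,C_F\rangle\langle\eta_j,C_F\rangle\prod_{e\in F}\mathsf c_e.
\]

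To conclude, I identify this sum with the one in Corollary~\ref{cor:k1}. A connected spanning unicyclic subgraph $F$ has $|V|-1+1$ edges, so by Definition~\ref{def:kquasitree} it is a $1$-quasi-tree exactly when the generator $\mu$ of $H_1(F;\mathbb Z)$ satisfies $f_*\mu\neq 0$. For such $F$, $\langle\eta_i,C_F\rangle=\langle[\eta_i],f_*[C_F]\rangle=\pm\gamma^i(f_*\mu)$, the sign (the orientation of $C_F$ relative to $\mu$) being independent of $i$, whence $\langle\eta_i,C_F\rangle\langle\eta_j,C_F\rangle=\gamma^i(f_*\mu)\gamma^j(f_*\mu)$; and if $f_*\mu=0$ then $\langle\eta_i,C_F\rangle=0$, so $F$ contributes nothing. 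Hence the displayed sum equals $-2\sum_{\text{$1$-quasi-tree }T}\gamma^i(f_*\mu)\gamma^j(f_*\mu)\prod_{e\in T}\mathsf c_e$, which by Corollary~\ref{cor:k1} is $-2\big(\sum_{\text{spanning tree }T}\prod_{e\in T}\mathsf c_e\big)(\Omega L)_{i,j}$, establishing the theorem.

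The argument is conceptually light, and the two points that need care are: (i) fixing the Poincaré-duality and dual-basis conventions so that the cocycle $\eta_i$ built into $\Delta_\phi$ pairs with $f_*\mu$ to give precisely the intersection number $\gamma^i(f_*\mu)$ of Corollary~\ref{cor:k1}; and (ii) the uniform cancellation of signs in $\det((\partial_{\bar\phi})_F)\det((\partial_\phi)_F)$. For (ii), if one prefers a self-contained proof to citing \cite{Forman1993,Kenyon2011}, a short leaf-peeling argument reduces the computation to the single-cycle determinant $1-h_C$, whose ``$\phi$'' and ``$\phi^{-1}$'' versions carry identical combinatorial prefactors, so their product is the sign-free $2-h_C-h_C^{-1}$; the rest is the Taylor expansion above.
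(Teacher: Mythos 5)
Your proof is correct, but it takes a genuinely different route from the paper's. The paper proves the theorem by a direct linear-algebra computation: it differentiates $P=\det\Delta(\boldsymbol z)$ twice via Jacobi's formula, uses the Matrix-Tree identity $\adj(\Delta(\mathbf 1))=\det(\Delta_{\bar o\bar o})\,\mathbf 1_V\mathbf 1_V^t$ together with a computation of $\partial_{z_i}\adj(\Delta)$ obtained by differentiating $\Delta\cdot\adj(\Delta)=P\cdot\mathbb{I}_V$, and arrives at $\operatorname{Hess}(P)|_{\mathbf 1}=2\det(\Delta_{\bar o\bar o})\left(M^t\mathsf{C}d_{\bar o}\Delta_{\bar o\bar o}^{-1}d_{\bar o}^t\mathsf{C}M-M^t\mathsf{C}M\right)$, which is $-2\det(\Delta_{\bar o\bar o})\,\Omega L$ by Proposition~\ref{prop:Lop}; no combinatorial expansion of $P$ and no appeal to Theorem~\ref{thm:HQT_count} is needed. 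You instead expand $P$ by the Forman--Kenyon cycle-rooted-spanning-forest identity, Taylor-expand each factor $2-h_C-h_C^{-1}$ at $\boldsymbol z=\mathbf 1$ to isolate the connected unicyclic subgraphs, and then invoke Corollary~\ref{cor:k1}. This is essentially the content of the paper's Remark~6.2 run in the opposite direction: the paper derives Corollary~\ref{cor:k1} as a consequence of Theorem~\ref{thm:Hess} plus the CRSF identity, whereas you derive Theorem~\ref{thm:Hess} from Corollary~\ref{cor:k1} plus the CRSF identity. Your direction is not circular, since Corollary~\ref{cor:k1} is established independently via Proposition~\ref{prop:Lop} and Cauchy--Binet, and your Hessian computation of the single-cycle factor ($-2\langle\eta_i,C\rangle\langle\eta_j,C\rangle$) and the vanishing of multi-cycle contributions are correct. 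What you lose is the paper's byproduct: because your argument consumes Corollary~\ref{cor:k1} and an external theorem, it cannot serve as the independent second proof of Corollary~\ref{cor:k1} that the paper advertises; what you gain is a more conceptual, term-by-term combinatorial explanation of where the factor $-2$ and the quadratic intersection numbers come from. The two caveats you flag yourself (matching the cocycle $\eta_i=\iota(\gamma^i,\cdot)$ to the paper's $m_i$, and the sign/monomial cancellation in $\det((\partial_{\bar\phi})_F)\det((\partial_\phi)_F)$) are real but standard and are handled consistently with the paper's conventions.
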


Theorem~\ref{thm:Hess} provides an alternative proof of Corollary~\ref{cor:k1}, based on the known expression relating the polynomial $P$ to cycle-rooted spanning forests in the graph (see Section~\ref{sec:crsf}).

Our results suggest an intriguing combinatorial interpretation of the potential function for the Weil--Petersson metric. In the classical setting, the Teichmüller space---namely, the moduli space of marked hyperbolic surfaces---is equipped with the Weil--Petersson Kähler metric \( h \). On one hand, via the spectral zeta function, the regularized determinant \( \log \det'\Delta \) of the Laplace--Beltrami operator defines a function on the Teichmüller space. On the other hand, fixing a symplectic basis of the homology, one can consider the normalized period matrix \( \Pi \) of holomorphic $1$-forms, where \( \det \Im \Pi \) is likewise a function on the Teichmüller space. These quantities are related by the following identity~\cite{Zograf1987,Kim2007}:
\[
\partial \bar{\partial} \log\left(\frac{\det'(\Delta)}{\det \Im \Pi}\right) = - \frac{\sqrt{-1}}{6 \pi} h,
\]
which identifies \( \log\left(\frac{\det'(\Delta)}{\det \Im \Pi}\right) \) as a potential function for the Weil--Petersson metric.

Theorem~\ref{thm:HQT_count} yields a combinatorial interpretation of this potential function as a weighted sum over \( g \)-quasi-trees (see Section~\ref{sec:normperiod}). Specifically, for a discrete Riemann surface, we have
\[
\log\left(\frac{\det'(\Delta)}{\det \Im \Pi}\right) = \log \sum_{\text{\( g \)-quasi-tree } T} \left( (\det \mathcal{T}_{I})^2 \prod_{e \in T} \mathsf{c}_{e} \right),
\]
where \( I = \{g+1, g+2, \dots, 2g\} \). This highlights a special role played by \( g \)-quasi-trees in the discrete setting.

Potential functions for the Weil--Petersson metric are not unique. For example, they also arise from the length of a uniformly distributed geodesic~\cite{Wolpert1986}, the renormalized volume of hyperbolic 3-manifolds~\cite{KS2008}, and the Loewner energy of planar Jordan curves in the context of Schramm--Loewner evolution~\cite{Wang2019}. The precise relationships among these various potential functions remain mysterious. Our results may provide new insights into a conjecture relating the number of spanning trees to the volume of hyperbolic 3-manifolds~\cite{CKL2019}.

For graphs embedded in a disk with vertices on the boundary, the discrete period matrix is analogous to the \emph{response matrix}, which maps the boundary values of a discrete harmonic function to the boundary values of its harmonic conjugate~\cite{Colin1994,Colin1996}. It is known that the minors of the response matrix can be expressed as weighted sums over certain subgraphs~\cite{Curtis1998,KW2009,Skopenkov2026}. In the context of closed surfaces, our discrete period matrix plays a role analogous to the response matrix, as also observed in~\cite{Lam2022}. Theorem~\ref{thm:HQT_count} provides an alternative answer to a question posed by Kenyon regarding the extension of combinatorial results from planar graphs to graphs embedded on closed surfaces~\cite[Section~5]{Kenyon2012}.

We note that the term ``quasi-trees'' has been adopted in various other contexts, of particular relevance to this article are those arising in matroid theory and topological graph theory~\cite{CMNR2019}, which we refer to as {\em ribbon-graph quasi-trees} to avoid confusion. A detailed comparison between our definition and these earlier notions is deferred to Section~\ref{sec:HQT_combin}, where we show that our definition properly includes them. A natural direction for future research is to establish parallel results for these alternative notions. For instance, it is a classical result that the collection of ribbon-graph quasi-trees forms a delta-matroid~\cite{Bouchet1989}. We have a corresponding result for homological quasi-trees:

\begin{theorem} \label{thm:delta_mat}
    The collection of homological quasi-trees forms a delta-matroid on the ground set $E$. Moreover, for each $0\leq k\leq 2g$, the collection of $k$-homological quasi-trees forms a matroid on the ground set $E$.
\end{theorem}

One may further ask about connections to invariants defined via ribbon-graph quasi-trees, such as the {Krushkal polynomial}~\cite{Krushkal2011, Butler2018} and the {critical group}~\cite{MMN2023}. However, these invariants appear to be quite distinct from the period matrix, and we therefore do not pursue them in this work.

We are grateful to Adrian Kassel for pointing out that Theorem 1.2 can also be deduced from a different perspective, as shown in the work by Kassel and Lévy~\cite{Kassel2022}.

\subsection*{Organization of the paper}

In Section~\ref{sec:background}, we recall some facts about the topology of surfaces and combinatorics of embedded graphs, as well as a few useful identities in linear algebra. In Section~\ref{sec:period}, we relate the period matrix of a discrete Riemann surface and the Laplacian of the corresponding graph. We prove Theorem~\ref{thm:HQT_count} and Theorem~\ref{thm:bijection} in Sections~\ref{sec:Omega_HQT} and~\ref{sec:duality}, respectively. Theorem~\ref{thm:Hess} is proven in Section~\ref{sec:crsf}, after an introduction to flat $\mathbb{C}$-bundles and their Laplacians. Section~\ref{sec:gHQT} is devoted to the special class of $g$-quasi-trees. The penultimate section, Section~\ref{sec:HQT_combin}, discusses the relation between homological quasi-trees and other well-studied combinatorial notions. Finally, Section~\ref{sec:example} presents a non-trivial example illustrating some of our results.

\section{Background} \label{sec:background}

This section collects the basic topological and combinatorial notions that are used throughout the paper. 
We recall facts about closed orientable surfaces, cellularly embedded graphs, and relevant linear algebra identities, which will provide the foundation for our later constructions and proofs.

\subsection{Notation and conventions}
Throughout this paper, we use the following notation.
For a matrix $M$ and index sets $I, J$, we denote by $M_{I,J}$ the submatrix of $M$ with rows indexed by $I$ and columns by $J$.
The transpose of a matrix $M$ is denoted by $M^t$.
For a set $X$, we denote its cardinality by $|X|$; in particular, $|V|$ and $|E|$ denote the number of vertices and edges of a graph $(V,E)$, respectively. For a function $f$ on a set of vertices (or edges), we use the notations $f(u)$ and $f_u$ interchangeably to denote the value of $f$ at $u$.
Finally, we identify linear operators between finite-dimensional vector spaces with their matrix representations with respect to the standard bases, freely switching between the operator and matrix perspectives.

\subsection{Topology of closed orientable surfaces}

We summarize some basic topological facts about surfaces that will be used throughout the paper. For further details, we refer the reader to~\cite{FarbMargalit}.

Let $S$ be a closed orientable surface of genus $g \geq 1$ with a chosen orientation. Throughout, we fix a smooth structure on $S$ and assume that the edges of the cell decomposition $(V,E,F)$, as well as all arcs considered, are smoothly embedded curves in $S$. For a pair of compact, oriented arcs $\gamma$ and $\tilde{\gamma}$ in $S$ that intersect transversely, we define the \emph{algebraic intersection} $\iota(\gamma,\tilde{\gamma}) \in \mathbb{Z}$ as the sum of the local intersection indices over all points of $\gamma \cap \tilde{\gamma}$. An intersection point $p \in \gamma \cap \tilde{\gamma}$ has index $+1$ if the ordered pair consisting of the tangent vector to $\gamma$ at $p$ followed by that of $\tilde{\gamma}$ is compatible with the chosen orientation of $S$, and index $-1$ otherwise. The algebraic intersection is skew-symmetric, that is,
\[
\iota(\gamma,\tilde{\gamma}) = - \iota(\tilde{\gamma},\gamma),
\]
and invariant under homological equivalence of loops.

It is well known that the first homology group of a closed oriented surface \( S \) of genus \( g \), \( H_1(S;\mathbb{Z}) \cong \mathbb{Z}^{2g} \), is a free abelian group of rank \( 2g \). There exist simple oriented loops \( \gamma_1, \gamma_2, \dots, \gamma_{2g} \) on \( S \) such that their homology classes \( [\gamma_1], [\gamma_2], \dots, [\gamma_{2g}] \) form a basis for \( H_1(S;\mathbb{Z}) \).
The algebraic intersection pairing \( \iota \) extends linearly to a bilinear form on \( H_1(S;\mathbb{Z}) \cong \mathbb{Z}^{2g} \), and defines a \emph{symplectic form}---a non-degenerate, skew-symmetric bilinear form. The oriented loops \( \gamma_i \) can be chosen such that their homology classes form a \emph{symplectic basis}, i.e., a basis for which the \emph{intersection matrix} \( \Omega := \big(\iota([\gamma_i], [\gamma_j])\big)_{i,j} \) is equal to
$\begin{pmatrix}
0 & \mathbb{I}_g \\
-\mathbb{I}_g & 0
\end{pmatrix}$.
A standard construction of a symplectic basis is to choose \( \gamma_i, \gamma_{g+i} \) as the ``latitudinal'' and ``longitudinal'' loops around the \( i \)-th ``hole'' of the surface \( S \), with compatible orientations.

Generalizing the convention in Section~\ref{sec:intro}, given a (not necessarily symplectic) basis of $H_1(S;\mathbb{Z})$ represented by oriented loops $\gamma_1,\ldots,\gamma_{2g}$, we may choose oriented loops $\gamma^1,\ldots,\gamma^{2g}$ such that $\iota(\gamma^i, \gamma_j)=\delta_{ij}$.

\subsection{Cellularly embedded graphs}\label{sec:cellemb}

Embedding a graph $G = (V, E)$ into a closed oriented surface $S$ means assigning each vertex to a point on $S$, and each edge to a simple arc connecting its endpoints, such that arcs only intersect at common endpoints. We assume the embedding is \emph{cellular}, meaning that the complement $S \setminus G$ decomposes into open discs called \emph{faces}. This structure defines a \emph{cellular decomposition} of the surface $S$.

Given such an embedding, the \emph{dual graph} $G^\star = (V^\star, E^\star)$ is defined as follows. Let $V^\star := \{ f^\star : f \in F \}$ and $E^\star := \{ e^\star : e \in E \}$. Place a vertex $f^\star$ in each face $f$ of $G$. For each edge $e \in E$ incident to faces $f_1$ and $f_2$, draw a dual edge $e^\star$ connecting $f_1^\star$ and $f_2^\star$, intersecting $e$ transversely at a single point in its interior. If $e$ is incident to the same face twice, that is, $f_1 = f_2$, then $e^\star$ is a loop edge. Throughout the article, the symbol $\star$ denotes dual objects corresponding to the dual graph.

The embedding $f: G \hookrightarrow S$ naturally induces a homomorphism on first homology groups, denoted by $f_*: H_1(G; \mathbb{Z}) \to H_1(S; \mathbb{Z})$. This map sends a $1$-cycle in the graph—formally, an integer linear combination of edges whose boundary is zero—to its image as a $1$-cycle in the surface. Since $G$ is embedded into $S$, these cycles can be interpreted as piecewise-smooth loops on $S$, and $f_*$ records their homology class in $S$. The induced homomorphism $f_*$ respects the additive structure of homology and preserves homological equivalence. In particular, cycles in $G$ that are null-homologous in $S$ lie in the kernel of $f_*$, and the image of $f_*$ reflects how the combinatorial structure of $G$ encodes information about the topology of $S$. 

Throughout the article, we assume that $\gamma_1,\gamma_2,\dots,\gamma_{2g}$ are oriented cycles on the embedded graph $(V,E)$ (all cycles, without the prefix ``$1$-'' as above, in this article are assumed to be simple) whose homology classes form a symplectic basis. Moreover, the respective $\gamma^1,\gamma^2,\dots,\gamma^{2g}$ are represented by cycles of $G^\star$.  We only consider intersection between $G$ and $G^\star$; since an edge of $G$ and a dual edge of $G^\star$ intersect transversely, the convention in the last section applies. These choices streamline both the exposition and the arguments, even though the results remain valid for general cases\footnote{In general, one can represent any homology class by a closed walk on the graph or its dual, in which case a set-theoretic intersection point may be crossed multiple times on each walk. So in the definition of $\iota$, the local intersection index should take account of such multiplicity in the obvious way.}.

We often describe an embedded graph using purely combinatorial data.

\begin{definition}[\cite{GrossTucker, Mondello2009}]
    A {\em rotation system} or {\em (orientable) ribbon graph} is a (finite, connected) graph together with a cyclic permutation of the set of half-edges
at each vertex, where each edge is regarded as a pair of half-edges corresponding to its two endpoints.
\end{definition}

Every graph embedded on an orientable surface inherits a ribbon structure. Conversely, every ribbon graph determines a \emph{cellular} embedding of the graph into an orientable surface (up to isotopy) by thickening the graph into a surface with boundary according to the cyclic ordering at each vertex, and then capping off each boundary component with a disc. 

Moreover, deletion and contraction can be defined naturally for ribbon graphs in a way that is compatible with standard graph operations. In particular, we can speak of the intrinsic embedding of a (spanning) subgraph of an embedded graph, and interpret combinatorially the fact that contracting a spanning tree does not change the topological type of the ambient surface.

\begin{definition}
    A {\em tree-cotree decomposition} of an embedded graph $G$ is a partition $T_0\sqcup R\sqcup C_0$ of the edge set of $G$ such that $T_0$ is a spanning tree of $G$ and ${C_0^\star}$ is a spanning tree of the dual graph $G^\star$.
\end{definition}

Given a tree-cotree decomposition $T_0 \sqcup R \sqcup C_0$ of an embedded graph $G$, we may contract the edges in $T_0$ and delete the edges in $C_0$ (which corresponds to contracting their dual edges in $G^\star$). The resulting graph, denoted $G/T_0 \!\setminus\!C_0$, has a single vertex and a single face, with edge set $R$ consisting of $2g$ edges as loops. It remains embedded on a surface of the same genus~\cite{Eppstein_TRC}.

\begin{lemma} \label{lem:TLC_homology}
    With the above setup, the loops of $G/T_0\!\setminus\!C_0$, viewed as $1$-homology classes by choosing an arbitrary orientation, form a basis of $H_1(S;\mathbb{Z})$. Dually, the loops of $G^\star/{C_0^\star}\!\setminus\!{T_0^\star}$ also form a basis of $H_1(S;\mathbb{Z})$. In fact, each loop in $G/T_0\!\setminus\!C_0$ intersects its corresponding dual loop in $G^\star/{C_0^\star}\!\setminus\!{T_0^\star}$ transversely at a single point and is disjoint from all other loops.
\end{lemma}

\begin{proof}
    The surface can be built by attaching the unique face as a $2$-cell to the loops in $R$, which form the $1$-skeleton. Since each edge in $R$ appears twice in the boundary of the face with opposite orientations, the boundary map $\partial_2$ is the zero map with trivial image. Hence, the first homology group $H_1=\ker\partial_1/{\rm im\!\ } \partial_2$ is freely generated by $R$; so $H_1(S;\mathbb{Z}) \cong \mathbb{Z}^{2g}$ with basis given by the loops in $R$.

    The dual statement follows from the fact that, by construction, each dual edge $e^\star \in {R^\star}$ intersects exactly one primal edge $e \in R$ transversely.
\end{proof}

In case one wants to work with the original embedded graph $G$ instead of $G/T_0\!\setminus\!C_0$, for every edge $e\in R$, there is a unique cycle, known as the {\em fundamental cycle}, contained in $T_0\cup\{e\}$. Upon fixing an arbitrary orientation of the cycle (which we assume to be the case in all our discussions of fundamental cycles in this article), it represents a homology class in $H_1(G;\mathbb{Z})$ and thus in $H_1(S;\mathbb{Z})$ via pushforward. There is an obvious bijection between these fundamental cycles (and the homology classes thereof) and the oriented loops of $G/T_0\!\setminus\!C_0$ in Lemma~\ref{lem:TLC_homology}.

Analogously to the intersection matrix $\mathcal{T}_{T,I}$, for the subgraph 
$\overline{T}^{\star}:=(E\setminus T)^\star$ in the dual and $I' \subset \{1,2,\dots,2g\}$ with $|I'|=2g-k$, we define the matrix $\mathcal{T}^\star_{T,I'}$ as follows. 
Write $I' = \{i'_1,i'_2,\dots,i'_{2g-k}\}$ with $i'_1 < i'_2 < \dots < i'_{2g-k}$. 
Let $\mu^1, \dots, \mu^{2g-k}$ form a basis of $H_1(\overline{T}^{\star};\mathbb{Z})$. 
Then the $(2g-k) \times (2g-k)$ matrix $\mathcal{T}^\star_{T,I'}$ is defined by
\[
(\mathcal{T}^\star_{T,I'})_{r,s} := \iota(f_*\mu^s, \gamma_{i'_r}).
\]
We omit the reference to $T$ when it is clear from context.

\subsection{Some identities in linear algebra}

We recall several classical identities in linear algebra that will be used in our later arguments. 

\begin{proposition}[Schur complement {\cite[Section~3.1]{Prasolov}}]\label{prop:Schur}
    Given a square block matrix $\begin{pmatrix} P & Q \\ R & S \end{pmatrix}$, where $P$ and $S$ are square submatrices and $P$ is invertible, we have
\[
\det \begin{pmatrix}
	P & Q \\
	R & S 
\end{pmatrix} = \det P \cdot \det(S - R P^{-1} Q).
\]
\end{proposition}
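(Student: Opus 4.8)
The plan is to prove the identity by exhibiting an explicit block factorization and then invoking multiplicativity of the determinant. Write $n$ and $m$ for the sizes of the square blocks $P$ and $S$ respectively. Since $P$ is invertible, the first step is to verify the factorization
\[
\begin{pmatrix} P & Q \\ R & S \end{pmatrix}
= \begin{pmatrix} \mathbb{I}_n & 0 \\ R P^{-1} & \mathbb{I}_m \end{pmatrix}
\begin{pmatrix} P & Q \\ 0 & S - R P^{-1} Q \end{pmatrix}
\]
by direct block multiplication: the $(1,1)$, $(1,2)$, and $(2,1)$ blocks of the product are $P$, $Q$, and $R P^{-1} P = R$, while the $(2,2)$ block is $R P^{-1} Q + (S - R P^{-1} Q) = S$, as required. (An entirely symmetric alternative is to factor out a block upper-triangular matrix on the right; either works.)

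Next, I would take determinants of both sides and apply $\det(XY) = \det X \cdot \det Y$. This reduces the claim to two standard facts about block-triangular matrices: that a lower block-triangular matrix with identity diagonal blocks has determinant $1$, and that $\det \begin{pmatrix} A & B \\ 0 & D \end{pmatrix} = \det A \cdot \det D$ whenever $A$ and $D$ are square. Applying the first fact to the left factor and the second to the right factor (with $A = P$ and $D = S - R P^{-1} Q$) yields $\det \begin{pmatrix} P & Q \\ R & S \end{pmatrix} = \det P \cdot \det (S - R P^{-1} Q)$, which is the assertion.

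The only ingredient not immediate from block arithmetic is the block-triangular determinant formula, which is classical: it follows from the Leibniz expansion, since any permutation contributing a nonzero term must map $\{1,\dots,n\}$ onto itself (otherwise some factor lands in the zero block), so the sum splits as the product of the expansions for $A$ and $D$; equivalently one can iterate Laplace expansion along the columns occupied by $A$. Accordingly there is no substantive obstacle here — the statement is recorded only because the factorization above will be used repeatedly in the subsequent determinantal computations for $\det(\Omega L)_{I,J}$ — and in the paper I would simply cite \cite{Prasolov} rather than reproduce the argument.
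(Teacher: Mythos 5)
Your factorization argument is correct and is the standard proof of the Schur complement determinant identity; the paper itself gives no proof, citing \cite{Prasolov}, and your block LDU decomposition is exactly the classical argument that reference records. Nothing to add.
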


\begin{proposition} [Cauchy--Binet formula {\cite[Theorem~2.3]{Prasolov}}] \label{prop:CauchyBinet}
    Let $A, B$ be $m \times n$ matrices with $m \leq n$. Index the columns of $A$ and $B$ by $\{1, \ldots, n\}$. Then
\[
\det(AB^t) = \sum_{\substack{I \subset \{1, \ldots, n\} \\ |I| = m}} \det A_I \cdot \det B_I^t,
\]
where $A_I$ and $B_I$ are the $m \times m$ submatrices of $A$ and $B$ whose columns are indexed by $I$.
\end{proposition}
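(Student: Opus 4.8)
The plan is to prove the formula by expanding $\det(AB^t)$ directly via the permutation definition of the determinant and then reorganizing the resulting sum according to which columns of $A$ and $B$ are selected. I would start from the entrywise identity $(AB^t)_{i,j} = \sum_{k=1}^n A_{i,k} B_{j,k}$, so that
\[
\det(AB^t) = \sum_{\sigma \in \mathfrak{S}_m} \operatorname{sgn}(\sigma) \prod_{i=1}^m \sum_{k=1}^n A_{i,k} B_{\sigma(i),k}.
\]
Expanding the product of sums over all tuples $K = (k_1, \dots, k_m) \in \{1,\dots,n\}^m$ and interchanging the order of summation gives
\[
\det(AB^t) = \sum_{K} \Big( \prod_{i=1}^m A_{i,k_i} \Big) \sum_{\sigma \in \mathfrak{S}_m} \operatorname{sgn}(\sigma) \prod_{i=1}^m B_{\sigma(i),k_i},
\]
where the inner sum over $\sigma$ is exactly the determinant of the $m \times m$ matrix whose $i$-th column is the $k_i$-th column of $B$.

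The first key step is to note that this inner determinant vanishes whenever two entries of $K$ coincide, since the associated matrix then has two equal columns; hence only tuples with pairwise distinct entries survive. Each such tuple determines an $m$-element subset $I = \{k_1,\dots,k_m\} \subset \{1,\dots,n\}$ together with an ordering. The second key step is the reindexing: writing $I = \{j_1 < \dots < j_m\}$ and $k_i = j_{\tau(i)}$ for a unique $\tau \in \mathfrak{S}_m$, sorting the columns of $B$ back into increasing order contributes a factor $\operatorname{sgn}(\tau)$, so the inner determinant equals $\operatorname{sgn}(\tau)\det B_I$, while $\prod_i A_{i,k_i} = \prod_i A_{i,j_{\tau(i)}}$. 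Collecting the terms over each fixed $I$ then yields
\[
\det(AB^t) = \sum_{\substack{I \subset \{1,\dots,n\} \\ |I|=m}} \det B_I \sum_{\tau \in \mathfrak{S}_m} \operatorname{sgn}(\tau) \prod_{i=1}^m A_{i,j_{\tau(i)}} = \sum_{\substack{I \subset \{1,\dots,n\} \\ |I|=m}} \det A_I \, \det B_I,
\]
and since $\det B_I = \det B_I^t$ this is the stated identity.

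The main technical point, rather than a genuine obstacle, is the sign bookkeeping: one must carefully match the factor $\operatorname{sgn}(\tau)$ produced by reordering the columns of $B$ against the permutation expansion defining $\det A_I$, and confirm that the degenerate tuples contribute nothing. An alternative route that reuses the Schur complement of Proposition~\ref{prop:Schur} is to evaluate the determinant of the block matrix $\left(\begin{smallmatrix} I_n & B^t \\ -A & 0 \end{smallmatrix}\right)$ in two ways: the Schur complement relative to the invertible block $I_n$ returns $\det(AB^t)$, while a generalized Laplace expansion along the first $n$ columns produces the sum $\sum_I \det A_I \det B_I$. I would favor the permutation-expansion argument above, as it is self-contained and avoids invoking the generalized Laplace expansion.
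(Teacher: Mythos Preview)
Your argument is correct: the permutation expansion, the vanishing of terms with repeated indices, and the sign bookkeeping under the reindexing by $\tau$ are all handled cleanly, and this is the standard textbook proof of Cauchy--Binet. Note, however, that the paper does not give its own proof of this proposition at all---it is stated as a background fact with a reference to Prasolov---so there is nothing in the paper to compare your approach against.
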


\begin{proposition} [Jacobi equality {\cite[Theorem~2.5.2]{Prasolov}}] \label{prop:Jacobi_adj}
    Let $A = (a_{i,j})$ be an $n \times n$ nonsingular matrix whose rows and columns are indexed by $\{1,\ldots,n\}$. Let $(\widetilde{a_{i,j}})$ denote the transpose of the adjugate matrix of $A$, i.e.\ $(\widetilde{a_{i,j}}) = (\adj A)^t$. Let $\sigma$ and $\pi$ be permutations of $\{1,\ldots,n\}$, and define $i_k = \sigma(k)$ and $j_k = \pi(k)$ for $1 \leq k \leq n$. Then for any $1 \leq p < n$,
    $$
        \det \begin{pmatrix}
            a_{i_1,j_1} & \ldots & a_{i_1, j_p} \\
            \vdots & \ddots & \vdots \\
            a_{i_p,j_1} & \ldots & a_{i_p,j_p}
        \end{pmatrix}
        = \frac{(-1)^\epsilon}{(\det A)^{p-1}} \cdot \det \begin{pmatrix}
            \widetilde{a_{i_{p+1},j_{p+1}}} & \ldots & \widetilde{a_{i_{p+1},j_n}} \\
            \vdots & \ddots & \vdots \\
            \widetilde{a_{i_n,j_{p+1}}} & \ldots & \widetilde{a_{i_n,j_n}}
        \end{pmatrix},
    $$
    where $(-1)^\epsilon$ is the sign of the permutation $\sigma \circ \pi$.
\end{proposition}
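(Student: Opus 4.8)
The plan is to reduce the identity to the case of a principal minor and then obtain it from a single Schur-complement computation, relying only on the block structure of $A$ and the defining relation $A\,(\adj A)=(\det A)\,\mathbb{I}_n$ (equivalently $\adj A=(\det A)\,A^{-1}$, since $A$ is nonsingular).

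First I would eliminate the permutations $\sigma,\pi$. Let $P_\sigma,P_\pi$ be the permutation matrices carrying the ordered tuples $(i_1,\dots,i_n)$ and $(j_1,\dots,j_n)$ to $(1,\dots,n)$, and set $A':=P_\sigma A P_\pi^{-1}$. Then the leading principal $p\times p$ block of $A'$ is precisely the matrix on the left-hand side, while its trailing $(n-p)\times(n-p)$ block corresponds to the rows $\{i_{p+1},\dots,i_n\}$ and columns $\{j_{p+1},\dots,j_n\}$. Using $\det P_\sigma=\operatorname{sgn}(\sigma)$, $\det P_\pi=\operatorname{sgn}(\pi)$ and the contravariance $\adj(XY)=\adj(Y)\adj(X)$, together with $\adj(P)=\det(P)P^{-1}$ for a permutation matrix $P$, I would rewrite every determinant in the statement in terms of $A'$. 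The accumulated signs are powers of $\operatorname{sgn}(\sigma)$ and $\operatorname{sgn}(\pi)$, which I would show collapse to $\operatorname{sgn}(\sigma)\operatorname{sgn}(\pi)=\operatorname{sgn}(\sigma\circ\pi)=(-1)^\epsilon$.

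After this reduction it suffices to treat $I=J=\{1,\dots,p\}$. Write $A=\left(\begin{smallmatrix}A_{11}&A_{12}\\ A_{21}&A_{22}\end{smallmatrix}\right)$ with $A_{11}$ the leading $p\times p$ block, and suppose first that $A_{11}$ is invertible. From $(\adj A)^t=(\det A)\,(A^{-1})^t$, the trailing $(n-p)\times(n-p)$ block of $(\adj A)^t$ equals $(\det A)$ times the transpose of the trailing block of $A^{-1}$; and by the block-inversion formula that block of $A^{-1}$ is the inverse of the Schur complement $A/A_{11}=A_{22}-A_{21}A_{11}^{-1}A_{12}$. Taking determinants of this $(n-p)\times(n-p)$ block and substituting Proposition~\ref{prop:Schur} in the form $\det(A/A_{11})=\det A/\det A_{11}$ shows that the block determinant equals $\det A_{11}$ multiplied by exactly the power of $\det A$ recorded in the statement; rearranging yields the claimed identity.

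I expect the main obstacle to be the sign and index bookkeeping in the permutation step---verifying that the trailing block of $(\adj A')^t$ is matched to the correct rows and columns of $(\adj A)^t$ and that the net sign is precisely $(-1)^\epsilon$; the Schur-complement core is then immediate. A secondary point is the degenerate case in which $A_{11}$ is singular, where Proposition~\ref{prop:Schur} does not apply directly. I would dispose of it by observing that, after clearing the single power of $\det A$ from the denominator, the asserted equality is a polynomial identity in the entries of $A$; it holds on the Zariski-dense locus where $A_{11}$ is invertible, and hence everywhere on the irreducible locus $\{\det A\neq 0\}$ by continuity.
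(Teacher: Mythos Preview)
The paper does not prove this proposition at all; it is quoted verbatim from Prasolov as a classical identity and used only once, in the proof of Theorem~\ref{thm:bijection}. So there is no ``paper's own proof'' to compare against, and your Schur-complement approach is a perfectly standard and valid way to establish Jacobi's equality: reduce to the principal block by conjugating with permutation matrices, then read off the trailing block of $(\adj A)^t=(\det A)(A^{-1})^t$ via the block-inversion formula and Proposition~\ref{prop:Schur}. The density argument for the case when $A_{11}$ is singular is also fine.

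One word of caution before you chase signs. If you actually carry out the Schur step you will find
\[
\det\bigl[(\adj A)^t\text{ trailing }(n-p)\times(n-p)\text{ block}\bigr]
=(\det A)^{\,n-p-1}\det A_{11},
\]
i.e.\ the exponent that falls out is $n-p-1$, not the $p-1$ printed in the displayed statement. (A quick check with $n=3$, $p=1$ confirms this.) In the paper's sole application this is immaterial because $\det\mathcal{R}=\pm 1$, and indeed the proof of Theorem~\ref{thm:bijection} uses the exponent $2g-k-1=n-p-1$, consistent with the computation above. So treat the $p-1$ in the displayed formula as a typographical slip rather than something your argument must reproduce; your method proves the correct version.
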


\section{Period matrices} \label{sec:period}

The period matrix, previously introduced for discrete Riemann surfaces, will here be expressed in terms of the discrete Laplacian. 
In this section, we prove Proposition~\ref{prop:Lop} by extending the approach of~\cite{Lam2024torus}, which treats the torus case ($g=1$), to surfaces of arbitrary genus $g$.

We recall that a \emph{discrete $1$-form} is a function on the oriented edges $\omega : \vec{E} \to \mathbb{R}$ such that $\omega_{uv} = -\omega_{vu}$. It is said to be \emph{closed} if, for every face $\phi \in F$ whose boundary $\partial \phi$ is composed of oriented edges, we have $\sum_{uv \in \partial \phi} \omega_{uv} = 0$.
It is \emph{co-closed} if it is a closed $1$-form with respect to the dual decomposition; that is, for every vertex $v \in V$, $\sum\omega_{uv} = 0$, where the sum ranges over all edges $uv$ incident to $v$.
Recall that $\mathsf{c}$ denotes the positive edge weights. For every $1$-form $\omega$, there is an associated $1$-form $\star\omega$ defined by
\[
(\star\omega)_{uv} := \mathsf{c}_{uv} \, \omega_{uv}.
\]
This map $\omega \mapsto \star\omega$ serves as a discrete analogue of the Hodge star operator.  
A $1$-form $\omega$ on the primal decomposition $(V, E, F)$ is called \emph{harmonic} if it is closed and its Hodge dual $\star\omega$ is co-closed.

We first start with the Riemann bilinear relation for the period matrix. For any two discrete harmonic $1$-forms $\omega$ and $\eta$, we can consider a symmetric bilinear form 
\[
\sum \mathsf{c}_{uv} \omega_{uv} \eta_{uv} = 	\sum  \omega({\star}\eta) = \sum (\star\omega ) \eta.
\]

\begin{proposition}[\cite{Bobenko2016}] \label{prop:stokes}
	Suppose $\omega, \eta:\vec{E} \to \mathbb{R}$ are $1$-forms such that $\omega$ is closed with periods $A \in \mathbb{R}^{2g}$ while $\eta$ is co-closed with periods $B \in \mathbb{R}^{2g}$ with respect to the same homology basis. Then
	\begin{align} \label{eq:bilinearform}
		\sum \omega_{uv}  \eta_{uv} =  A^t \Omega B =  -B^t \Omega A. 
	\end{align}
\end{proposition}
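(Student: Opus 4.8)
The plan is to deduce the identity from a discrete analogue of Stokes' theorem on a fundamental polygon, in the spirit of \cite{Bobenko2016}. First I would realize the homology classes $[\gamma_1],\dots,[\gamma_{2g}]$ by a system of oriented simple loops that all pass through a common vertex $v_0$ and whose complement $S\setminus\bigcup_k \gamma_k$ is an open disc; cutting $S$ along these loops produces a closed disc $D$ — a combinatorial $4g$-gon — together with a quotient map $q\colon D\to S$ that glues the boundary edges in pairs according to the symplectic relation $\prod_{i=1}^{g}[\gamma_i,\gamma_{g+i}]$. Pulling back, $\hat\omega:=q^*\omega$ is a closed $1$-form on the simply connected complex $D$, hence exact: there is $f\colon V(D)\to\mathbb{R}$ with $\hat\omega_{uv}=f_v-f_u$. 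The key observation is that the jump of $f$ across a cut loop $\gamma_k$ is constant and equals (a fixed sign times) the corresponding period $a_k$: if $p^-,p^+$ are the two preimages of an interior vertex $p$ of $\gamma_k$, then $(f_{p^+}-f_{p^-})-(f_{q^+}-f_{q^-})$ telescopes through values of the well-defined form $\hat\omega$ along $\gamma_k$ and so vanishes, while $f_{p^+}-f_{p^-}$ equals the $\omega$-integral along any loop crossing $\gamma_k$ once, which is $a_k$ by closedness of $\omega$.

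Next I would carry out the summation by parts. Regrouping $\sum_{\hat e\in E(D)}\hat\omega_{\hat e}\hat\eta_{\hat e}$ by vertices, and using $\hat\omega_{uv}=f_v-f_u$ together with the antisymmetry of $\hat\eta$, one gets $\sum_{\hat w}f_{\hat w}\,(\operatorname{div}\hat\eta)_{\hat w}$ with $(\operatorname{div}\hat\eta)_{\hat w}:=\sum_{\hat u\sim\hat w}\hat\eta_{\hat u\hat w}$. For an interior vertex the neighbours in $D$ biject with the neighbours of $q(\hat w)$ in $S$, so $(\operatorname{div}\hat\eta)_{\hat w}=(\operatorname{div}\eta)_{q(\hat w)}=0$ by co-closedness of $\eta$; only the vertices on $\partial D$ survive. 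Collecting these boundary terms, each side of the $4g$-gon contributes (a value of $f$) times (a local flux of $\eta$); combining a side with its glued partner lets the constant jump $a_k$ factor out, while the residual sum of $\eta$ along that side assembles into a period $b_j$ of $\eta$, and the incidence pattern of the sides in the canonical polygon is exactly encoded by $\Omega$, giving $\sum_e\omega_e\eta_e=A^t\Omega B$. Edges of $G$ lying on $\bigcup_k\gamma_k$ are traversed twice in $\sum_{\hat e}$, which I would absorb by the usual halving convention (equivalently, the doubled contributions cancel against the matching over-counting in the boundary flux). Finally $A^t\Omega B=-B^t\Omega A$ is immediate from $\Omega^t=-\Omega$.

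The step I expect to be the main obstacle is precisely this last bookkeeping: making the ``boundary of the fundamental domain'' argument rigorous in the discrete category — choosing the cut system so that $D$ carries an honest cell structure, handling vertices and edges on the cuts without double-counting, and tracking the orientation signs so that the gluing matrix of the polygon's sides comes out to be exactly $\Omega$ rather than $\pm\Omega$ or a permuted variant. As a cross-check I would use the cleaner reduction: since adding an exact form $\delta h$ to $\omega$ changes the left-hand side by $\sum_v h_v(\operatorname{div}\eta)_v=0$ and leaves $A$ unchanged, and dually for co-exact perturbations of $\eta$, both sides depend only on the cohomology class of $\omega$ and the co-exactness class of $\eta$; by bilinearity it then suffices to verify the identity on a convenient basis realized via a tree–cotree decomposition (Lemma~\ref{lem:TLC_homology}), where $G/T_0\!\setminus\!C_0$ already has a single face equal to the $4g$-gon and the computation is essentially explicit.
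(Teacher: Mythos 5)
The paper does not actually prove this proposition---it is quoted from \cite{Bobenko2016}, with only the remark that the sum ``can be expressed as a product of integrals over the boundary of a fundamental domain, reflecting a discrete analogue of Stokes' theorem''---and your cut-along-a-$4g$-gon argument is exactly that standard proof, so your approach is the intended one and the outline is sound. One step is stated incorrectly rather than merely deferred: the jump $f_{p^+}-f_{p^-}$ across the cut $\gamma_k$ is not $a_k$, and ``the $\omega$-integral along any loop crossing $\gamma_k$ once'' is not well-defined, since two such loops differ by a cycle that may itself have nonzero periods. The jump equals the integral of $\omega$ along the specific transversal path determined by the polygon's gluing, i.e.\ (up to sign) the period over the symplectically conjugate cycle; this is precisely why $\Omega$ rather than the identity appears in the final pairing, and the slip is absorbed by the orientation bookkeeping you already flag as the main obstacle. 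Your closing ``cross-check'' is in fact the cleaner proof and the one most consistent with how the proposition is used later: both sides of \eqref{eq:bilinearform} are unchanged when $\omega$ is shifted by an exact form (summation by parts against the co-closed $\eta$) and when $\eta$ is shifted by a form that is exact on the dual graph (summation by parts against the closed $\omega$), so by bilinearity it suffices to verify the identity for $\omega=m_i$ with $m_i(e)=\iota(\gamma^i,e)$, where $\sum_e m_i(e)\eta_e$ is visibly the $\eta$-period over the dual cycle $\gamma^i$ and the coefficients expressing $[\gamma^i]$ in the basis $[\gamma_j]$ produce $\Omega$ directly; this is essentially the computation carried out in the proof of Proposition~\ref{prop:Lop}. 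I would promote that reduction to the main argument and keep the fundamental-domain picture as motivation.
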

\begin{corollary}
		Suppose $\omega,\eta:\vec{E} \to \mathbb{R}$ are harmonic $1$-forms with periods $A$ and $B$, respectively. Then \begin{align}\label{eq:bilinearform2}
		\sum \mathsf{c}_{uv} \omega_{uv} \eta_{uv} = A^t \Omega L B = B^t \Omega L A. 
	\end{align}
\end{corollary}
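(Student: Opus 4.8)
The plan is to reduce the corollary to the already-established Proposition~\ref{prop:stokes} by exploiting the two equivalent ways of writing the symmetric bilinear form $\sum \mathsf{c}_{uv}\omega_{uv}\eta_{uv}$, together with the defining relation $A^\star = LA$ of the period matrix. First I would observe that for a harmonic 1-form $\omega$ with period $A$, the 1-form $\star\omega = \mathsf{c}\omega$ is co-closed (this is exactly the harmonicity hypothesis) and its period with respect to the same homology basis is $A^\star = LA$. Likewise $\star\eta = \mathsf{c}\eta$ is co-closed with period $B^\star = LB$.

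Next I would apply Proposition~\ref{prop:stokes} with the closed 1-form $\omega$ (period $A$) and the co-closed 1-form $\star\eta$ (period $LB$). Since $(\star\eta)_{uv} = \mathsf{c}_{uv}\eta_{uv}$, the left-hand side of \eqref{eq:bilinearform} becomes $\sum \omega_{uv}(\star\eta)_{uv} = \sum \mathsf{c}_{uv}\omega_{uv}\eta_{uv}$, so the proposition yields
\[
\sum \mathsf{c}_{uv}\omega_{uv}\eta_{uv} = A^t \Omega (LB) = A^t \Omega L B.
\]
Symmetrically, applying Proposition~\ref{prop:stokes} to the closed 1-form $\eta$ (period $B$) and the co-closed 1-form $\star\omega$ (period $LA$) gives $\sum \mathsf{c}_{uv}\eta_{uv}\omega_{uv} = B^t\Omega L A$. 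Since the left-hand side is manifestly symmetric in $\omega$ and $\eta$ (the summand $\mathsf{c}_{uv}\omega_{uv}\eta_{uv}$ is a product of antisymmetric functions, hence well-defined on unoriented edges and symmetric under swapping $\omega,\eta$), the two computations give $A^t\Omega L B = B^t\Omega L A$, which is exactly \eqref{eq:bilinearform2}.

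There is essentially no obstacle here: the only point requiring a word of care is that Proposition~\ref{prop:stokes} is stated for a closed 1-form paired with a co-closed 1-form, and one must correctly identify $\star\eta$ (respectively $\star\omega$) as the co-closed partner whose period is $LB$ (respectively $LA$) by the very definition of $L$ via $A^\star = LA$. One should also note in passing that the sign conventions in \eqref{eq:bilinearform} ($A^t\Omega B = -B^t\Omega A$) are consistent with the claimed symmetry: applying the first form to the pair $(\omega,\star\eta)$ and the second form $-B^t\Omega A$ to the pair $(\eta,\star\omega)$ interpreted as $(\star\omega,\eta)$ both land on $\sum\mathsf{c}_{uv}\omega_{uv}\eta_{uv}$, so no contradiction arises.
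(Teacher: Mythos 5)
Your proof is correct and is essentially the argument the paper intends: the displayed identity $\sum \mathsf{c}_{uv}\omega_{uv}\eta_{uv} = \sum \omega(\star\eta) = \sum(\star\omega)\eta$ preceding Proposition~\ref{prop:stokes}, combined with that proposition applied to the closed/co-closed pairs $(\omega,\star\eta)$ and $(\eta,\star\omega)$ with periods $A, LB$ and $B, LA$ respectively. Your remarks on identifying $\star\eta$, $\star\omega$ as the co-closed partners via $A^\star = LA$ and on the sign conventions are exactly the right points of care.
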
This implies that $\Omega L$ is symmetric. Furthermore, whenever $\omega=\eta$ is not vanishing everywhere, the quantity in Equation \eqref{eq:bilinearform2} is positive and hence $\Omega L$ is positive definite.

We now represent the Laplacian using the discrete differential operators. To this end, we define the incidence matrix $d \in \mathbb{R}^{|E| \times |V|}$. Fix an arbitrary orientation for each edge $e$, denoting its head and tail by $e_+$ and $e_-$, respectively. Then the operator $d: \mathbb{R}^{V} \to \mathbb{R}^E$ is defined by
\[
dh(e) = h(e_+) - h(e_-).
\]
Observe that for any function $h: V \to \mathbb{R}$, the differential $dh$ defines a closed discrete $1$-form. A discrete $1$-form $\omega$ is said to be \emph{exact} if there exists a function $h: V \to \mathbb{R}$ such that $\omega = dh$.

We further define
\[
\Delta := d^{t} \mathsf{C} d,
\]
where $\mathsf{C}$ is the $E \times E$ diagonal matrix with entries $\mathsf{C}_{e,e} = \mathsf{c}_e$, the corresponding edge weight, and $d^t$ denotes the transpose of $d$. One can verify that the operator $\Delta : \mathbb{R}^V \to \mathbb{R}^V$ satisfies, for every vertex $v \in V$,
\[
(\Delta h)_v = \sum \mathsf{c}_{uv} (h_v - h_u),
\]
where the sum ranges over all edges $uv$ incident to $v$. The matrix $\Delta$ is called the \emph{discrete Laplacian}. Notably, the definition of $\Delta$ is independent of the choice of edge orientations, unlike that of the incidence matrix $d$. Furthermore, $\Delta$ is positive semi-definite, and its kernel consists of constant functions.

By interpreting the linear map $d: \mathbb{R}^{V} \to \mathbb{R}^E$ as a matrix, we observe that each column vector of $d$ defines a function on oriented edges, i.e., an exact discrete $1$-form. Any collection of $|V|-1$ column vectors forms a basis for the space of exact $1$-forms on the cell decomposition $G = (V, E, F)$. 

To extend this to a basis for the space of closed $1$-forms, we introduce $2g$ closed $1$-forms $m_i : \vec{E} \to \mathbb{R}$ with prescribed periods. Specifically, for $i, j \in \{1, \dots, 2g\}$, we require that the integral of $m_i$ along each cycle $\gamma_j$ satisfies
\[
\sum_{e \in \gamma_j} m_i(e) = \delta_{ij}.
\]

Such closed $1$-forms can be constructed as follows. Recall the collection $\gamma^1, \gamma^2, \dots, \gamma^{2g}$ as defined in Section~\ref{sec:background}, which satisfy $\iota(\gamma^i,\gamma_j)=\delta_{ij}$.

We then define $m_i : \vec{E} \to \mathbb{R}$ by
\[
m_i(e) := \iota(\gamma^i, e)
\]
for each oriented edge $e$ of the primal graph. Each $m_i$ is a closed $1$-form satisfying the desired condition:
\[
\sum_{e \in \gamma_j} m_i(e) = \iota(\gamma^i, \gamma_j) = \delta_{ij}.
\]

In this way, every closed $1$-form $\omega$ with periods  $A=(a_1,\dots,a_{2g})^t \in \mathbb{R}^{2g}$ can be expressed as
\begin{align}\label{for:M}
    \omega = dh+  \begin{pmatrix}
	\vert & &\vert  \\
	m_1 & \dots &m_{2g} \\
	\vert & &\vert
\end{pmatrix} \begin{pmatrix}
	a_1 \\ \vdots \\ a_{2g}
\end{pmatrix} =:  d h + M \begin{pmatrix}
	a_1 \\ \vdots \\ a_{2g}
\end{pmatrix} 
\end{align}
for some $h \in \mathbb{R}^{V}$ unique up to an additive constant. The closed $1$-form $\omega$ is harmonic if
\[
0 = d^t \mathsf{C} \omega =  d^t \mathsf{C} dh +  d^t \mathsf{C} M A = \Delta h +  d^t \mathsf{C} M A.
\]
To obtain a unique solution for $h$, we can pick a vertex $o$ and demand $h_o=0$. We write $d_{\bar{o}}$ as the submatrix of $d$ with the column corresponding to vertex $o$ removed. We also write $\Delta_{\bar{o} \bar{o}}$ as the submatrix of $\Delta$ with the column and the row corresponding to vertex $o$ removed; it is straightforward to see that $\Delta_{\bar{o} \bar{o}}=d_{\bar{o}}^t\mathsf{C}d_{\bar{o}}$. One deduces that $\Delta_{\bar{o} \bar{o}}$ is invertible, since its determinant counts spanning trees by Kirchhoff's Matrix-Tree Theorem and hence positive. The values of $h$ at vertices other than $o$ can be obtained via
\begin{align}\label{eq:inversef}
	h_{\bar{o}} = - \Delta_{\bar{o} \bar{o}}^{-1} d_{\bar{o}}^t \mathsf{C} M A.
\end{align}
Now we can express the linear map $L: \mathbb{R}^{2g} \to \mathbb{R}^{2g}$ in terms of the edge weights. 
\begin{proposition}\label{prop:Lop}
	\[
		\Omega L =   M^t \mathsf{C}M - M^t \mathsf{C} d_{\bar{o}} \Delta_{\bar{o} \bar{o}}^{-1} d_{\bar{o}}^t \mathsf{C} M.
	\]
\end{proposition}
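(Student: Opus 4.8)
The plan is to compute the symmetric bilinear form $\sum \mathsf{c}_{uv}\omega_{uv}\eta_{uv}$ for two \emph{arbitrary} harmonic 1-forms $\omega,\eta$ with period vectors $A,B\in\mathbb{R}^{2g}$, and to compare the resulting expression with the identity $\sum \mathsf{c}_{uv}\omega_{uv}\eta_{uv}=A^t\Omega L B$ from Equation~\eqref{eq:bilinearform2}. Since every $A\in\mathbb{R}^{2g}$ is the period vector of a unique harmonic 1-form, and a bilinear form on $\mathbb{R}^{2g}$ is determined by its values on all pairs $(A,B)$, matching the two expressions for all $A,B$ will identify $\Omega L$ with $M^t\mathsf{C}M - M^t\mathsf{C}d_{\bar o}\Delta_{\bar o\bar o}^{-1}d_{\bar o}^t\mathsf{C}M$.

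\textbf{Main computation.} Write $\omega=dh+MA$ and $\eta=dg+MB$ as in Equation~\eqref{for:M}, with $h,g$ normalized by $h_o=g_o=0$, so that by Equation~\eqref{eq:inversef} we have $h_{\bar o}=-\Delta_{\bar o\bar o}^{-1}d_{\bar o}^t\mathsf{C}MA$ and likewise $g_{\bar o}=-\Delta_{\bar o\bar o}^{-1}d_{\bar o}^t\mathsf{C}MB$. Regarding $\omega,\eta$ as vectors in $\mathbb{R}^E$ via the fixed reference orientation (legitimate since $\omega_{uv}\eta_{uv}$ does not depend on the orientation of the edge), we have $\sum\mathsf{c}_{uv}\omega_{uv}\eta_{uv}=\omega^t\mathsf{C}\eta$, and expanding gives
\[
\omega^t\mathsf{C}\eta = h^t d^t\mathsf{C}d\,g + h^t d^t\mathsf{C}MB + A^t M^t\mathsf{C}d\,g + A^t M^t\mathsf{C}MB.
\]
Now use $d^t\mathsf{C}d=\Delta$ together with the harmonicity conditions, which read $d^t\mathsf{C}MA=-\Delta h$ and $d^t\mathsf{C}MB=-\Delta g$. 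The first three terms become $h^t\Delta g$, $-h^t\Delta g$, and $-h^t\Delta g$ (the last using symmetry of $\Delta$), so they collapse to $-h^t\Delta g$, leaving $\omega^t\mathsf{C}\eta = A^t M^t\mathsf{C}MB - h^t\Delta g$. Since $h_o=g_o=0$, one has $h^t\Delta g = h_{\bar o}^t\Delta_{\bar o\bar o}g_{\bar o}$; substituting the formulas above and using that $\Delta_{\bar o\bar o}$ and hence $\Delta_{\bar o\bar o}^{-1}$ is symmetric, this equals $A^t M^t\mathsf{C}d_{\bar o}\Delta_{\bar o\bar o}^{-1}d_{\bar o}^t\mathsf{C}MB$. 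Therefore $\omega^t\mathsf{C}\eta = A^t\bigl(M^t\mathsf{C}M - M^t\mathsf{C}d_{\bar o}\Delta_{\bar o\bar o}^{-1}d_{\bar o}^t\mathsf{C}M\bigr)B$, and comparison with $A^t\Omega L B$ over all $A,B$ finishes the proof.

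\textbf{Where the care is needed.} There is no genuine obstacle here; the argument is a direct calculation. The points that require attention are: (i) correctly invoking the harmonicity identities $d^t\mathsf{C}MA=-\Delta h$, $d^t\mathsf{C}MB=-\Delta g$ to annihilate the exact parts $dh$, $dg$; (ii) the bookkeeping of the deleted-vertex block — it is precisely the normalization $h_o=g_o=0$ that lets one replace $h^t\Delta g$ by $h_{\bar o}^t\Delta_{\bar o\bar o}g_{\bar o}$, after which $\Delta_{\bar o\bar o}^{-1}\Delta_{\bar o\bar o}=\mathbb{I}$ produces the stated product; and (iii) observing that, although the right-hand side manifestly involves the auxiliary choice of vertex $o$, the left-hand side $\Omega L$ does not, so the expression is in fact independent of $o$.
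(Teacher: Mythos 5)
Your proof is correct. It is, however, a genuinely different route from the paper's. The paper starts from the harmonicity equation $\mathsf{C}\,dh + \mathsf{C}MA = \star\omega = d^{\star}\tilde{h} + M_{\star}A^{\star}$, decomposing the conjugate form on the \emph{dual} graph into an exact part plus a period part, and then left-multiplies by $M^t$, using Proposition~\ref{prop:stokes} to kill $M^t d^{\star}$ and to evaluate $M^t M_{\star} = \Omega$; this produces $\Omega A^{\star} = \Omega L A$ directly. You instead evaluate the Dirichlet energy pairing $\omega^t \mathsf{C}\eta$ of two harmonic forms in two ways: abstractly as $A^t \Omega L B$ via Equation~\eqref{eq:bilinearform2}, and concretely via the primal decomposition $\omega = dh + MA$, $\eta = dg + MB$ together with $d^t\mathsf{C}MA = -\Delta h$ and the reduction $h^t\Delta g = h_{\bar o}^t\Delta_{\bar o\bar o}g_{\bar o}$. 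Your version never needs the dual-graph objects $d^{\star}$, $M_{\star}$, $\tilde{h}$, and it makes the symmetry of $\Omega L$ (and the independence of the formula from the choice of $o$) manifest from the outset; the cost is that the appearance of $\Omega$ is outsourced to the Corollary, whose own proof is where the dual periods and the identity $M^tM_{\star}=\Omega$ are implicitly used, whereas the paper's argument exhibits that mechanism explicitly and yields the conjugate period vector $A^{\star}$ itself rather than only the bilinear pairing. There is no circularity in your use of Equation~\eqref{eq:bilinearform2}, since $L$ is defined abstractly as the harmonic-conjugation action on periods before either statement.
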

\begin{proof}
	To compute the formula, we need further notations on the dual graph. Since the orientation of the primal edges is chosen, it naturally induces an orientation of the dual edges. Namely, a dual edge $e^\star$ is oriented from the right face of $e$ to the left. We then define the incidence matrix $d^{\star}: \mathbb{R}^F \to \mathbb{R}^E$ similarly. The columns of $d^{\star}$ span the space of exact $1$-forms on the dual graph. We further define a $|E|{\times}2g$ matrix $M_{\star}$ such that its columns $m_{\star i}$ represent closed $1$-forms on the dual graph having non-trivial periods 
\begin{align*}
    \sum_{e\in\gamma_j} m_{\star i}(e)= \delta_{ij}.
\end{align*}
    
    Let $\omega$ be a harmonic $1$-form with period vector $A \in \mathbb{R}^{2g}$, and let $A^\star \in \mathbb{R}^{2g}$ be the period vector of the dual harmonic form $\star\omega$ on the dual graph. Note that $\omega$ is closed and $\star\omega$ is co-closed (on the primal graph). So we have
	\[
	\mathsf{C} d h + \mathsf{C} M A  = \mathsf{C} \omega = \star\omega = d^{\star} \tilde{h} + M_{\star} A^{\star}
	\]
	for some $h:V \to \mathbb{R}$ with $h_o = 0$ and $\tilde{h}:F \to \mathbb{R}$. Multiplying $M^t$ to both sides from the left yields
	\[
	M^t \mathsf{C} d h + M^t \mathsf{C} M A  = (M^t d^{\star}) \tilde{h} + M^t M_{\star} A^{\star} = 0 + \Omega A^{\star},
	\]
	where Proposition~\ref{prop:stokes} is applied to  $M^t d^{\star}$ and $M^t M_{\star}$ as the rows of $M^t$ represent closed $1$-forms on the primal  graph while the columns of $d^{\star}$ and $M_{\star}$ represent co-closed $1$-forms. Because $h_{o}=0$, we have from Equation \eqref{eq:inversef}
	\begin{align*}
		A^{\star} &=  \Omega^{-1}  (M^t \mathsf{C} d_{\bar{o}} h_{\bar{o}} + M^t \mathsf{C}M A)  \\  &= \Omega^{-1}  (- M^t \mathsf{C} d_{\bar{o}} \Delta_{\bar{o} \bar{o}}^{-1} d_{\bar{o}}^t \mathsf{C} M + M^t \mathsf{C}M ) A
	\end{align*}
	and thus 
    \begin{align*}
        \Omega L &=   M^t \mathsf{C}M - M^t \mathsf{C} d_{\bar{o}} \Delta_{\bar{o} \bar{o}}^{-1} d_{\bar{o}}^t \mathsf{C} M. \qedhere
    \end{align*}
\end{proof}

\section{Period matrices as weighted sum over quasi-trees} \label{sec:Omega_HQT}

This section is devoted to proving Theorem~\ref{thm:HQT_count}, using the interpretation of the period matrix developed in the previous section.

\begin{proof}[Proof of Theorem~\ref{thm:HQT_count}]

Fix an integer $1 \leq k \leq 2g$ and index sets $I,J \subset \{1,2,\dots,2g\}$ such that $|I|=|J|=k$. Denote by $M_I, M_J$ the submatrices of $M$ consisting of the columns indexed by $I$ and $J$, respectively. By Proposition~\ref{prop:Lop},
\[
	(\Omega L)_{I,J} =   M_I^t \mathsf{C}M_J - M_I^t \mathsf{C} d_{\bar{o}} \Delta_{\bar{o} \bar{o}}^{-1} d_{\bar{o}}^t \mathsf{C} M_J.
\]
Applying Proposition~\ref{prop:Schur} yields
\begin{align*}
\det  	\Delta_{\bar{o} \bar{o}} \det (\Omega L)_{I,J} &= \det \begin{pmatrix}
 	\Delta_{\bar{o} \bar{o}} &  d_{\bar{o}}^t \mathsf{C} M_J \\ M_I^t \mathsf{C} d_{\bar{o}} & M_I^t \mathsf{C}M_J 
 \end{pmatrix} \\ &= 	\det  \left( \begin{pmatrix}
 d_{\bar{o}} & M_I
 \end{pmatrix}^t \mathsf{C}   \begin{pmatrix}
 d_{\bar{o}} & M_J
 \end{pmatrix} \right).
\end{align*}

By Proposition~\ref{prop:CauchyBinet},
\[
\det  \left( \begin{pmatrix}
	d_{\bar{o}} & M_I
\end{pmatrix}^t \mathsf{C}   \begin{pmatrix}
	d_{\bar{o}} & M_J
\end{pmatrix} \right) = \sum_{T} \det  \begin{pmatrix}
d_{T,{\bar{o}}} & M_{T,I}
\end{pmatrix}^t \det \mathsf{C}_{T,T} \det  \begin{pmatrix}
d_{T,{\bar{o}}} & M_{T,J}
\end{pmatrix},
\]
where the sum is over all subsets $T$ consisting of $(|V|-1+k)$ edges, and $\begin{pmatrix}	d_{T,{\bar{o}}} & M_{T,I} \end{pmatrix}$ and $\begin{pmatrix}	d_{T,{\bar{o}}} & M_{T,J} \end{pmatrix}$ are the submatrices of $\begin{pmatrix} d_{\bar{o}} & M_I \end{pmatrix}$ and $\begin{pmatrix} d_{\bar{o}} & M_J \end{pmatrix}$ consisting of the rows corresponding to $T$, respectively.

Since $\det \mathsf{C}_{T,T} = \prod_{uv \in T} \mathsf{c}_{uv}$, it remains to show that (1) a term of the form
\[
\det \begin{pmatrix} d_{T,\bar{o}} & M_{T,I} \end{pmatrix} \det \begin{pmatrix} d_{T,\bar{o}} & M_{T,J} \end{pmatrix}
\]
is nonzero only if $T$ is a $k$-quasi-tree, and (2) in that case, it equals $\det \mathcal{T}_I \det \mathcal{T}_J$.

\smallskip

We first prove (2). Suppose $T$ is a $k$-quasi-tree. By definition, it is connected, so it contains a spanning tree $T_0$. Let $R := T \setminus T_0$. Recall that the image of the (oriented) fundamental cycles $\mu_1, \dots, \mu_{|R|}$ associated to $T_0$, under the pushforward map $f_*$, form a basis of $H_1(T;\mathbb{Z})$. We shall use it as the basis in the definition of $\mathcal{T}_I$.

Additionally, we can assume that each $\gamma^i$ ($i \in I \cup J$) has no intersection with $T_0$, by replacing it with a homologous representative. This is possible since $T_0$ is contractible, and such a modification changes $m_i$ only by a linear combination of the columns of $d_{\bar{o}}$, and hence does not affect the determinant.

Under these assumptions, the matrix $\begin{pmatrix} d_{T,\bar{o}} & M_{T,I} \end{pmatrix}$ takes the block form
\[
\begin{pmatrix}
d_{T_0,\bar{o}} & 0 \\
* & M_{R,I}
\end{pmatrix},
\]
where $d_{T_0,\bar{o}}$ is the incidence matrix associated with the spanning tree $T_0$ and with the column corresponding to the root vertex $o$ removed; $d_{T_0,\bar{o}}$ is a unimodular matrix. Furthermore, we have
\[
(M_{R,I})_{r,i} = \iota(\gamma^{i}, f_*\mu_r) = (\mathcal{T}_I)_{i,r} \quad \text{for } i \in I \text{ and } 1 \le r \le |R|.
\]
It follows that
\[
\det \begin{pmatrix} d_{T,\bar{o}} & M_{T,I} \end{pmatrix} = \det d_{T_0,\bar{o}} \cdot \det M_{R,I} = \pm \det \mathcal{T}_I,
\]
where the sign depends only on the choice of $T_0$ and $\bar{o}$, and cancels out when multiplied with the corresponding determinant for $J$.

\smallskip

Now we prove (1). If $T$ is not a $k$-quasi-tree, then one of the following holds: 
\begin{enumerate}
    \item[(i)] $T$ is disconnected. In this case, the sum of the columns of $d_{T,\bar{o}}$ corresponding to the vertices of any connected component not containing the root $o$ is zero, implying that $d_{T,\bar{o}}$ has linearly dependent columns.
    
    \item[(ii)] $T$ is connected. As before, we consider the block form $\begin{pmatrix}
    d_{T_0,\bar{o}} & 0 \\
    * & M_{R,I}
    \end{pmatrix}$, where $T_0$ is a spanning tree of $T$ and $R = T \setminus T_0$. In this case, the pushforward map $f_*: H_1(T; \mathbb{Z}) \to H_1(S; \mathbb{Z})$ has rank less than $k$ by definition, so there exists a linear dependence among $f_*\mu_r$'s. In particular, $M_{R,I}$ has linearly dependent rows thus linearly dependent columns, the latter can be said for $\begin{pmatrix} d_{T,\bar{o}} & M_{T,I} \end{pmatrix}$ as well.
\end{enumerate}
In either case, the determinant of $\begin{pmatrix} d_{T,\bar{o}} & M_{T,I} \end{pmatrix}$ is zero.
\end{proof}

In general, the determinants $\det \mathcal{T}_I$ can serve as interesting invariants in their own right; see Sections~\ref{sec:duality} and~\ref{sec:interpret} for further discussion. However, in the special case of maximal homological quasi-trees, i.e., when $k = 2g$, these determinants are always $\pm 1$, and Theorem~\ref{thm:HQT_count} specializes to Corollary~\ref{cor:det}. The following statement corresponds to the $k = 0$ case of Theorem~\ref{thm:bijection}. Since the linear-algebraic argument in Section~\ref{sec:duality} does not apply in this case, we include it here.

\begin{proposition} \label{prop:MHQT_TU}
    Let $T$ be a subgraph consisting of $|V|-1+2g$ edges and set $I = \{1, 2, \ldots, 2g\}$.
    The determinant $\det \mathcal{T}_I$ is either $0$—in which case $T$ is not a homological quasi-tree—or $\pm 1$.
\end{proposition}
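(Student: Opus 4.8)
\textbf{Proof plan for Proposition~\ref{prop:MHQT_TU}.}

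The plan is to show that when $I = \{1,2,\dots,2g\}$, the matrix $\mathcal{T}_I$ is a square integer matrix of size $2g$ whose determinant is $0$ or $\pm 1$ precisely because it records a change of basis (or a degenerate map) between two distinguished lattices inside $H_1(S;\mathbb{Z})$. First I would recall from Theorem~\ref{thm:HQT_count} (or rather its proof) that $\det \mathcal{T}_I \det \mathcal{T}_I = \det\begin{pmatrix} d_{T,\bar o} & M_{T,I}\end{pmatrix}^2$ up to the unimodular factor $\det d_{T_0,\bar o} = \pm 1$, so it suffices to analyze $\det\begin{pmatrix} d_{T,\bar o} & M_{T,I}\end{pmatrix}$ itself; equivalently, since $I$ is the full index set, $\mathcal{T}_I$ is literally the matrix expressing the classes $f_*\mu_1,\dots,f_*\mu_{|R|}$ in the basis $\gamma_1,\dots,\gamma_{2g}$ via the dual pairing $\iota(\gamma^{i},\cdot)$, i.e.\ $\mathcal{T}_I$ is the coordinate matrix of the tuple $(f_*\mu_s)_s$ with respect to the basis $(\gamma_i)_i$. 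When $T$ is not a $2g$-quasi-tree, the proof of Theorem~\ref{thm:HQT_count} already shows $\det\begin{pmatrix} d_{T,\bar o} & M_{T,I}\end{pmatrix} = 0$, hence $\det\mathcal{T}_I = 0$; so the substance is the quasi-tree case.

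So suppose $T$ is a $2g$-quasi-tree, with spanning tree $T_0$ and $R = T\setminus T_0$, $|R| = 2g$. Then $f_*\mu_1,\dots,f_*\mu_{2g}$ are linearly independent in $H_1(S;\mathbb{Z})\cong\mathbb{Z}^{2g}$, so they span a finite-index sublattice $\Lambda$, and $\mathcal{T}_I$ is exactly the integer matrix with $\det\mathcal{T}_I = \pm[\,H_1(S;\mathbb{Z}):\Lambda\,]$. The key step is therefore to prove that $\Lambda = H_1(S;\mathbb{Z})$, i.e.\ the index is $1$. Here I would invoke the tree-cotree machinery: extend $T$ to the graph $G$ and note that $E\setminus T$, after contracting $T_0$, is a set of $2g$ edges; but more to the point, $T$ itself, having exactly $|V|-1+2g$ edges and being connected, is a spanning subgraph whose complement in $E$ is a set $C_0$ with $|C_0| = |E| - (|V|-1) - 2g = |F| - 1$ edges (using $|V|-|E|+|F| = 2-2g$). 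I would argue that $\overset{\star}{C_0}$ is a spanning tree of $G^\star$: it has the right number of edges, and it is acyclic because a cycle among dual edges in $\overset{\star}{C_0}$ would force, by planar-duality-type reasoning on the ribbon graph, a non-separating structure contradicting that $f_*H_1(T;\mathbb{Z})$ already has full rank $2g$ (equivalently, $T$ has no ``too many'' independent cycles competing with the dual). Thus $T_0 \sqcup R \sqcup C_0$ is a tree-cotree decomposition, and by Lemma~\ref{lem:TLC_homology} the loops of $G/T_0\setminus C_0$ — which are precisely the fundamental cycles $f_*\mu_s$ — form a \emph{basis} of $H_1(S;\mathbb{Z})$, not merely a finite-index sublattice. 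Hence $\det\mathcal{T}_I = \pm 1$.

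An alternative, perhaps cleaner, route to the same conclusion avoids constructing the tree-cotree decomposition by hand: once $f_*\mu_1,\dots,f_*\mu_{2g}$ are known to be linearly independent over $\mathbb{Z}$, consider the dual pairing $\iota$ on $H_1(S;\mathbb{Z})$, which is unimodular; the Gram-type matrix $\big(\iota(f_*\mu_r, f_*\mu_s)\big)_{r,s}$ has determinant $([H_1:\Lambda])^2 \det\Omega = ([H_1:\Lambda])^2$, while on the other hand this intersection number can be computed combinatorially on the embedded graph $T$ as a sum of local contributions and is seen to equal $\pm 1$ in absolute value because $T$ carries a single face after contracting $T_0$ (so each loop meets each ``complementary'' loop in at most one transverse point). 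I expect the main obstacle to be making rigorous the claim that $\overset{\star}{C_0}$ is acyclic — i.e.\ that a connected spanning subgraph with exactly $|V|-1+2g$ edges on a genus-$g$ surface whose cycle space surjects onto $H_1(S;\mathbb{Z})$ necessarily has its edge-complement dual to a spanning tree. This is a statement about the interaction between the cycle matroid of $G$, the cycle matroid of $G^\star$, and the homology map $f_*$; the condition ``$\operatorname{rank} f_* = 2g$'' is exactly what rules out the complementary dual edges containing a cycle, and I would phrase this carefully using the bicycle space / the exact sequence $0 \to \ker f_* \to H_1(G;\mathbb{Z}) \to H_1(S;\mathbb{Z})$ together with $|E\setminus T| = |F|-1$ to conclude $\overset{\star}{C_0}$ is a spanning tree, completing the proof.
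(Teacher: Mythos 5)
Your main route is the same as the paper's: in the non-quasi-tree case the determinant vanishes by the block computation from the proof of Theorem~\ref{thm:HQT_count}, and in the quasi-tree case you reduce to showing that $T_0\sqcup R\sqcup C_0$ with $C_0=E\setminus T$ is a tree-cotree decomposition, so that Lemma~\ref{lem:TLC_homology} makes the classes $f_*\mu_s$ a $\mathbb{Z}$-basis of $H_1(S;\mathbb{Z})$ and $\mathcal{T}_I$ a unimodular change-of-basis matrix. The paper's proof is exactly this, with the tree-cotree step supplied separately by Proposition~\ref{prop:HQT_duality} (which proves connectedness of $T^\star$ rather than acyclicity; with $|C_0|=|F|-1$ edges the two are equivalent).

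The one genuine gap is the step you yourself flag: your argument that $\overset{\star}{C_0}$ is acyclic only treats cycles that are non-trivial in $H_1(S;\mathbb{Z})$. For such a cycle $\gamma^\star$ the reasoning is fine: $\gamma^\star$ crosses no edge of $T$, so $\iota([\gamma^\star],\cdot)$ vanishes on $f_*H_1(T;\mathbb{Z})$, which has full rank, forcing $[\gamma^\star]=0$ by non-degeneracy of the intersection form — contradiction. But a cycle in $\overset{\star}{C_0}$ could be null-homologous in $S$, and then the full-rank hypothesis gives you nothing. That case needs a different ingredient: a null-homologous cycle of $G^\star$ is an integer combination of dual-face boundaries, i.e.\ its support contains an edge cut of $G$, and a cut disjoint from $T$ contradicts $T$ being spanning and connected. (Equivalently, the paper's Proposition~\ref{prop:HQT_duality} argues the contrapositive: disconnectedness of $T^\star$ would put a null-homologous, nonzero $1$-cycle inside $T$, contradicting injectivity of $f_*$ on $H_1(T;\mathbb{Z})$.) Your second, ``alternative'' route via the Gram matrix $\big(\iota(f_*\mu_r,f_*\mu_s)\big)$ is not sound as sketched: its entries lie in $\{0,\pm1\}$ only after perturbing the loops at the common vertex according to the ribbon structure, and a $\{0,\pm1\}$-matrix need not have determinant $\pm1$; showing that this Gram determinant equals $\pm1$ is essentially equivalent to the unimodularity you are trying to prove, and the fact that $T$ has a single face is itself a consequence of the tree-cotree claim rather than an independent input.
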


\begin{proof}
    Following the proof of Theorem~\ref{thm:HQT_count}, if $T$ is not a $2g$-quasi-tree, then $\det \mathcal{T}_I = 0$. If $T$ is indeed a $2g$-quasi-tree, then by the tree-cotree decomposition, we have $T = T_0 \sqcup R$. Moreover, after contracting $T_0$, the loops in $R$ form a $\mathbb{Z}$-basis of $H_1(S;\mathbb{Z})$ by Lemma~\ref{lem:TLC_homology}. Therefore, there exists a unimodular change-of-basis matrix $N$ between the loops in $R$ and the canonical basis $\gamma_1, \ldots, \gamma_{2g}$. By the linearity of the intersection pairing, we have $\mathcal{T}_I=N$, and hence its determinant is $\pm 1$.
\end{proof}

\section{Duality of homological quasi-trees} \label{sec:duality}

This section develops a duality between $k$-quasi-trees of a graph $G$ and $(2g-k)$-quasi-trees of its dual $G^\star$. Using this correspondence, we deduce Theorem~\ref{thm:bijection} and derive Corollary~\ref{cor:HQT_duality_count} relating sums over quasi-trees in the primal and dual graphs.

\begin{proposition} \label{prop:HQT_duality}
    Let $T$ be a $k$-quasi-tree of $G$. Then the complement edges ${\overline{T}^{\star}}:=\{e^\star: e\in E\setminus T\}$ form a $(2g-k)$-quasi-tree of $G^\star$. In fact, there exists a tree-cotree decomposition $E=T_0\sqcup R\sqcup C_0$ such that $T_0\subset T\subset E\setminus C_0$. 
    Conversely, given a tree-cotree decomposition $E=T_0\sqcup R\sqcup C_0$, any subgraph of the form $T_0\sqcup R'$ with $R'\subset R$ is a homological quasi-tree.
\end{proposition}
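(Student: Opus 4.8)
The plan is to prove the ``converse'' statement first — it is the easy half — and then deduce the forward half from it together with one connectivity lemma about dual graphs.

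For the converse, let $E=T_0\sqcup R\sqcup C_0$ be a tree-cotree decomposition and $R'\subseteq R$. The subgraph $T':=T_0\sqcup R'$ is connected and spanning with $|V|-1+|R'|$ edges, and its cycle group $H_1(T';\mathbb{Z})$ is freely generated by the fundamental cycles $\{C(T_0,e):e\in R'\}$ (these are exactly the fundamental cycles of $T_0$ inside $T'$, since $C(T_0,e)\subseteq T_0\cup\{e\}\subseteq T'$). By Lemma~\ref{lem:TLC_homology} and the identification of these fundamental cycles with the loops of $G/T_0\!\setminus\!C_0$, the classes $\{f_*[C(T_0,e)]:e\in R\}$ form a $\mathbb{Z}$-basis of $H_1(S;\mathbb{Z})$, so their restriction to $R'$ is $\mathbb{Z}$-linearly independent; hence $f_*$ is injective on $H_1(T';\mathbb{Z})$ with image of rank $|R'|$, and $T'$ is an $|R'|$-quasi-tree.

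For the forward direction, fix a $k$-quasi-tree $T$ and a spanning tree $T_0\subseteq T$. The key step I would isolate is: \emph{$\overset{\star}{(E\setminus T)}$ is a connected spanning subgraph of $G^\star$}. Granting this, it contains a spanning tree $\overset{\star}{C_0}$ of $G^\star$ with $C_0\subseteq E\setminus T$, and setting $R:=E\setminus(T_0\sqcup C_0)$ produces a tree-cotree decomposition $E=T_0\sqcup R\sqcup C_0$ with $T_0\subseteq T$ and $C_0\subseteq E\setminus T$ (with $|R|=2g$ automatic by Euler's formula). Then $E\setminus T=(R\setminus T)\sqcup C_0$, so $T^\star=\overset{\star}{C_0}\sqcup\overset{\star}{(R\setminus T)}$; since $E^\star=\overset{\star}{C_0}\sqcup\overset{\star}{R}\sqcup\overset{\star}{T_0}$ is itself a tree-cotree decomposition of $G^\star$ (tree $\overset{\star}{C_0}$, cotree $\overset{\star}{T_0}$, the latter because its dual $T_0$ is a spanning tree of $G=G^{\star\star}$), applying the converse to $G^\star$ shows $T^\star$ is a homological quasi-tree of rank $|R\setminus T|=2g-(|E(T)|-|V|+1)=2g-k$, as desired.

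The remaining connectivity claim is the main obstacle. If $\overset{\star}{(E\setminus T)}$ were disconnected, there would be a proper nonempty $F_1\subsetneq V^\star$ with the dual cut $\delta_{G^\star}(F_1)$ contained in $\overset{\star}{T}$ and nonempty (as $G^\star$ is connected). I would then consider $z:=\partial_2\!\big(\sum_{f\in F_1}f\big)\in C_1(S;\mathbb{Q})$, the sum of the oriented face boundaries over $F_1$: under the correspondence $e\leftrightarrow e^\star$ its support is precisely the primal edge set dual to $\delta_{G^\star}(F_1)$, so $z\neq 0$ and $\operatorname{supp}(z)\subseteq T$; moreover $z$ is a $1$-cycle that bounds, hence represents a nonzero class of $H_1(T;\mathbb{Q})$ lying in $\ker f_*$. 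This contradicts the fact that for a $k$-quasi-tree one has $\dim H_1(T;\mathbb{Q})=|E(T)|-|V|+1=k=\operatorname{rank} f_*$, so $f_*$ is injective on $H_1(T;\mathbb{Q})$. The only delicate point is the combinatorial-topological bookkeeping: identifying the cut space of $G^\star$ with the boundary subspace $\operatorname{im}\partial_2$ of $G$ (equivalently, cuts of $G^\star$ with sums of face-boundary cycles of $G$), and matching orientations so that $\operatorname{supp}(z)$ is exactly the edge set dual to $\delta_{G^\star}(F_1)$; the homological input from the quasi-tree hypothesis is then immediate.
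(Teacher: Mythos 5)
Your proof is correct and follows essentially the same route as the paper's: the crux in both is that a disconnection of $\overset{\star}{(E\setminus T)}$ would force the boundary of a proper subset of faces to lie in $T$, yielding a nonzero class in $\ker\bigl(H_1(T;\mathbb{Z})\to H_1(S;\mathbb{Z})\bigr)$ and contradicting the injectivity forced by the edge count, after which both arguments build the tree-cotree decomposition and read off the ranks from Lemma~\ref{lem:TLC_homology}. The only (harmless) organizational difference is that you prove the converse half first and apply it to $G^\star$ to get the rank of $T^\star$, whereas the paper computes that rank directly from the dual statement of the lemma.
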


\begin{proof}    
    We first claim that $(V^\star,{\overline{T}^{\star}})$ is a connected subgraph of $G^\star$. Suppose not. Then $G^\star\setminus T^{\star}$ has a proper connected component whose vertex set is $V_0^\star\subset V^\star$, in particular the dual edges between $V_0^\star$ and $V^\star\setminus V_0^\star$ form a cut that is contained in $T^{\star}$. The primal picture is that $T$ contains the boundary of a proper subset of faces, such boundary as a $1$-cycle of $T$ represents a non-trivial class of $H_1(T;\mathbb{Z})$ but its image in $H_1(S;\mathbb{Z})$ is trivial. This is a contradiction to the assumption that $H_1(T;\mathbb{Z})\rightarrow H_1(S;\mathbb{Z})$ is an injective map (of rank $k$).

    Since both $T$ and ${\overline{T}^{\star}}$ are connected, we can respectively pick a spanning tree $T_0\subset T, {C_0^\star}\subset {\overline{T}^{\star}}$ from each of them. As $T_0\sqcup R\sqcup C_0$ is a tree-cotree decomposition, we may consider $G':=G/T_0\!\setminus\!C_0$. $H_1(T;\mathbb{Z})$ is generated by $T\cap R$ (viewed as loops of $G'$) and $H_1({\overline{T}^{\star}};\mathbb{Z})$ is generated by ${R^\star}\cap {\overline{T}^{\star}}$, which by Lemma~\ref{lem:TLC_homology} is of rank $2g-k$.
\end{proof}

\begin{proof}[Proof of Theorem~\ref{thm:bijection}]
    If $T$ is a $k$-quasi-tree, then ${\overline{T}^{\star}}$ is a $(2g-k)$-quasi-tree by the above proposition. Hence its homology $H_1(\overline{T}^{\star};\mathbb{Z})$ is of rank $2g-k$ and we can pick a basis $\mu^1,\ldots,\mu^{2g-k}$ for it, whose image under $f_*$ remains linearly independent. List the elements of $\overline{I}=\{1,\ldots,2g\}\setminus I$ as $\{i'_1,\ldots,i'_{2g-k}\}$. We can then form the matrix $\mathcal{T}^\star_{\overline{I}}$ in Theorem~\ref{thm:bijection} whose $(r,s)$-entry is $\iota(f_*\mu^s,\gamma_{i'_r})$.

    The $k=0$ (respectively, $k=2g$) case follows from Proposition~\ref{prop:MHQT_TU}, so we consider $1\leq k\leq 2g-1$. Given a $k$-quasi-tree $T$, we choose a tree-cotree decomposition $E=T_0\sqcup R\sqcup C_0$ as in Proposition~\ref{prop:HQT_duality}. By contracting $T_0$ and deleting $C_0$, we may assume $G$ and $G^\star$ each have one vertex, one face, and $2g$ edges $R\leftrightarrow {R^\star}$. List the edges of $R$ as $e_1,\ldots,e_{2g}$ such that $e_1,\ldots,e_k\in T, e_{k+1},\ldots, e_{2g}\in R\setminus T$, and denote their dual edges in $R^\star$ as $e^\star_1,\ldots,e^\star_{2g}$. Consider the $2g\times 2g$ matrix $\mathcal{R}$ whose $(i,j)$-entry is $\iota(\gamma^i, e_j)$. By Proposition~\ref{prop:MHQT_TU}, $\det \mathcal{R}=\pm 1$, where $\det\mathcal{T}_I$ is $\det\mathcal{R}_{I,T}$ by definition. Similarly, consider the $2g\times 2g$ matrix $\mathcal{R}^\star$ whose $(i,j)$-entry is $\iota(e^\star_j,\gamma_i)$. Then $\det\mathcal{T}^\star_{\overline{I}}=\det\mathcal{R}^\star_{\overline{I},R\setminus T}$.

    Note that reordering the edges corresponds to column operations on $\mathcal{R},\mathcal{R}^\star$, which only introduces an extra sign factor. Moreover, the sign only depends on $T$ (and hence canceled out upon multiplying to the other term corresponding to $J$ in the theorem statement).

    Since $\delta_{j,j'}=\iota(e^\star_{j'},e_j)=\sum_{i}\iota(e^\star_{j'},\gamma_i)\cdot\iota(\gamma^i,e_j)=\sum_i \mathcal{R}^\star_{i,j'}\mathcal{R}_{i,j}$, we have $\mathcal{R}^\star=(\mathcal{R}^{-1})^t=\det\mathcal{R}\cdot (\adj \mathcal{R})^t$.
    By Proposition~\ref{prop:Jacobi_adj}, $\det\mathcal{R}_{I,T}=(-1)^\epsilon(\det\mathcal{R})^{2g-k-1}\det(\adj \mathcal{R})^t_{\overline{I},R\setminus T}=(-1)^\epsilon\det\mathcal{R}\det\mathcal{R}^\star_{\overline{I},R\setminus T}$, here $\epsilon$ is related to the ordering of rows, which only depends on the fixed ordering of $I$.
\end{proof}

The following statement is a considerably weaker corollary of the above individual equality of terms: it can be deduced directly from the observation that the left-hand side of the dual graph counterpart of Theorem~\ref{thm:HQT_count} (or Proposition~\ref{prop:Lop}) uses $\Omega^t L^{-1}$. Here $L^{-1}$ is the discrete period matrix mapping the periods of harmonic $1$-forms on the dual graphs to those on the primal graph.

\begin{corollary}\label{cor:HQT_duality_count}
    Given $0 \leq k \leq 2g$ and $I, J \subset \{1, 2, \dots, 2g\}$ with $|I| = |J| = k$, we have
    \[
    \frac{
    \displaystyle\sum_{T}  
    \left( \det \mathcal{T}_{I} \det \mathcal{T}_{J} \prod_{e \in T} \mathsf{c}_{e} \right)
    }{
    \displaystyle\sum_{S}  
    \left( \prod_{e \in S} \mathsf{c}_{e} \right)
    }
    =
    \pm
    \frac{
    \displaystyle\sum_{{\overline{T}^{\star}}}  
    \left( \det \mathcal{T}^\star_{\overline{I}} \det \mathcal{T}^\star_{\overline{J}} \prod_{e \in T^{\star}} \mathsf{c}^{-1}_{e} \right)
    }{
    \displaystyle\sum_{{\overline{S}^{\star}}}  
    \left( \prod_{e \in S^{\star}} \mathsf{c}^{-1}_{e} \right)
    },
    \]
    where the sums in the numerators and denominators are taken over $k$-quasi-trees $T$ and $0$-quasi-trees $S$ of the primal graph $G$, and over $(2g - k)$-quasi-trees ${\overline{T}^{\star}}$ and $2g$-quasi-trees ${\overline{S}^{\star}}$ of the dual graph $G^\star$, respectively. Moreover, the sign factor depends only on $I$ and $J$.
\end{corollary}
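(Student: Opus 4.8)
The plan is to read off this corollary from Theorem~\ref{thm:bijection} together with the substitution that replaces a primal quasi-tree by its dual complement. Theorem~\ref{thm:bijection} is the term-by-term statement; the corollary is its aggregated form, so no new geometric input is needed, only bookkeeping of edge weights and of quasi-tree sizes.

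Concretely, I would first fix the orderings of $I$, $J$, $I^c$, $J^c$ and record the three facts that Theorem~\ref{thm:bijection} supplies at once. First, for every $k$-quasi-tree $T$ of $G$ the complement $T^\star:=\{e^\star:e\in E\setminus T\}$ is a $(2g-k)$-quasi-tree of $G^\star$; applying the theorem again to $G^\star$ (where $G^{\star\star}=G$ and $(T^\star)^\star=T$) shows that $T\mapsto T^\star$ is a bijection from the $k$-quasi-trees of $G$ onto the $(2g-k)$-quasi-trees of $G^\star$, and --- the $k=0$ instance --- from the spanning trees of $G$ onto the $2g$-quasi-trees of $G^\star$. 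Second, there is a sign $\epsilon=\epsilon(I,J)\in\{+1,-1\}$, depending only on $I$ and $J$ and not on $T$, with
\[
\det\mathcal{T}_I\cdot\det\mathcal{T}_J=\epsilon\cdot\det\mathcal{T}^\star_{I^c}\cdot\det\mathcal{T}^\star_{J^c}
\]
for every such $T$ (reading $\det\mathcal{T}_\emptyset=1$ and using Proposition~\ref{prop:MHQT_TU} so that this persists at $k=0$ and $k=2g$, where both sides of the corollary just equal $\pm1$). Third ingredient, purely elementary: with $\mathsf{c}_E:=\prod_{e\in E}\mathsf{c}_e$, every spanning subgraph satisfies $\prod_{e\in T}\mathsf{c}_e=\mathsf{c}_E\prod_{e\in E\setminus T}\mathsf{c}_e^{-1}=\mathsf{c}_E\prod_{e^\star\in T^\star}\mathsf{c}_e^{-1}$, where $\mathsf{c}_e^{-1}$ on a dual edge means the reciprocal weight of the primal edge it crosses, as in the statement. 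Feeding these into the numerator of the left-hand side and re-indexing the sum over $k$-quasi-trees $T$ by $T^\star$ turns it into $\epsilon\,\mathsf{c}_E$ times the right-hand numerator; feeding the weight identity into the denominator and re-indexing over $S^\star$ turns it into $\mathsf{c}_E$ times the right-hand denominator. Forming the ratio, $\mathsf{c}_E$ cancels and the corollary drops out with sign $\epsilon(I,J)$.

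I do not anticipate a real obstacle; the one step that is easy to mishandle, and which I would emphasize, is the pairing used on the denominators. The planar reflex is to match spanning trees of $G$ with spanning trees of $G^\star$, but on a closed surface of genus $g$ the complement of a spanning tree of $G$ is a $2g$-quasi-tree of $G^\star$, so the normalization $\sum_S\prod_{e\in S}\mathsf{c}_e$ must be matched with the $2g$-quasi-tree sum that genuinely occurs on the right; this is precisely why the stray factor $\mathsf{c}_E$ disappears. A second, shorter route --- the one indicated in the sentence preceding the statement --- is to note that the left-hand side is $\det(\Omega L)_{I,J}$ by Theorem~\ref{thm:HQT_count}, while Theorem~\ref{thm:HQT_count} applied to the dual discrete Riemann surface (reciprocal weights, the dual symplectic basis), combined with Corollary~\ref{cor:det} on $G$ and on $G^\star$ to convert the spanning-tree normalization into the $2g$-quasi-tree one, expresses the right-hand side as a complementary minor of the dual period matrix $\Omega^tL^{-1}$; once the change of basis from $\{\gamma_i\}$ to $\{\gamma^i\}$ (a signed permutation) is taken into account, the desired identity is exactly the Jacobi relation of Proposition~\ref{prop:Jacobi_adj} linking complementary minors of a matrix and of its inverse. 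Because that argument threads through several sign and basis conventions, I would write up the quasi-tree version as the primary proof.
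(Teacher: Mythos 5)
Your proposal is correct and follows essentially the same route as the paper: the paper also deduces the identity term-by-term from Theorem~\ref{thm:bijection}, using the weight relation $\prod_{e\in T}\mathsf{c}_e=\bigl(\prod_{e\in E}\mathsf{c}_e\bigr)\prod_{e^\star\in T^\star}\mathsf{c}_e^{-1}$ and the $k=0$ instance for the denominators so that the global factor $\prod_{e\in E}\mathsf{c}_e$ cancels. Your emphasis on matching primal spanning trees with $2g$-quasi-trees of the dual (rather than dual spanning trees) is exactly the point the paper's normalization relies on, and the alternative route via $\Omega^tL^{-1}$ that you mention is the one the paper alludes to in the remark preceding the statement.
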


\begin{proof}
    Each term $\det \mathcal{T}_{I} \det \mathcal{T}_{J}  \prod_{e \in T} \mathsf{c}_{e}$ corresponding to a $k$-quasi-tree $T$ of $G$ is equal to $\pm\prod_{e} \mathsf{c}_{e}\cdot\det \mathcal{T}^\star_{\overline{I}} \det \mathcal{T}^\star_{\overline{J}}  \prod_{e \in {\overline{T}^{\star}}} \mathsf{c}^{-1}_{e}$, where ${\overline{T}^{\star}}$ is a $(2g-k)$-quasi-tree of $G^\star$, by Theorem~\ref{thm:bijection} and vice versa. Hence there is a bijection between the terms in the numerator of the left-hand side and those of the right-hand side, up to a factor $\pm\prod_{e} \mathsf{c}_{e}$ whose sign is independent of $T$. The same can be said for the denominators, e.g., by setting $k=0$.
\end{proof}

\section{Laplacian on flat $\mathbb{C}$-bundle}\label{sec:crsf}

This section is devoted to proving Theorem~\ref{thm:Hess}, which expresses the Hessian of the bundle Laplacian polynomial at the trivial connection in terms of the period matrix.

Given a graph $G = (V, E)$ embedded on a closed orientable surface $S$ of genus $g$, the enumeration of certain subgraphs has appeared in a variety of contexts. A setting closely related to ours arises from the study of the Laplacian associated to a flat $\mathbb{C}$-bundle on the embedded graph, which we discuss in this section. We follow the exposition in ~\cite[Section 5.1]{Kenyon2012}. 

A {\em discrete $\mathbb{C}$-bundle} with connection $(W,\phi)$ on a graph consists of the following data:
\begin{enumerate}
    \item a $1$-dimensional complex vector space $W_u \cong \mathbb{C}$ for every vertex $u$ and
    \item a linear isomorphism $\phi_{uv}:W_u \to W_v$ for every edge $uv$ oriented from vertex $u$ to $v$ satisfying $\phi_{uv}=\phi_{vu}^{-1}$.
\end{enumerate}
Since the vector spaces associated with the vertices are $1$-dimensional, we identify each $\phi_{uv}$ with a nonzero complex number.
The connection is {\em flat} on the embedded graph if, for every contractible loop $\gamma$ on the surface consisting of oriented edges, we have
\[
\prod_{uv \in \gamma} \phi_{uv} = 1.
\]
With flat connection, one thus has holonomy representation of the fundamental group 
$\rho: \pi_1(S) \to \mathbb{C}^*$, where $\mathbb{C}^*= \mathbb{C}\setminus\{0\}$. Namely for any closed loop $\gamma \in \pi_1(S)$, one defines
\[
\rho(\gamma) = \prod_{uv \in \gamma} \phi_{uv}.
\]
The holonomy representation is then determined by the values on the generating set of loops $\gamma_1, \dots, \gamma_{2g}$ of $\pi_1(S)$ and we write $z_k:=\rho(\gamma_k)$ for $k=1,2,\dots,2g$. Such a flat $\mathbb{C}$-bundle is isomorphic to the flat $\mathbb{C}$-bundle where the connection $\phi$ is of the form
\[
\phi_{e} = \prod^{2g}_{k=1} z_k^{\iota(\gamma^i,e)}
\]
for every oriented edge $e$. When the connection is trivial—that is, when $z_1 = \dots = z_{2g} = 1$—we recover the usual untwisted setting.

Given a vector bundle over the vertices, we can further extend it to the edges of the graph. 
In addition to the vertex spaces, we assign a $1$-dimensional vector space $W_e \cong \mathbb{C}$ to each non-oriented edge $e$. 
For a vertex $u$ incident to an edge $e$, define connection isomorphisms $\phi_{ue} \colon W_u \to W_e$ and $\phi_{eu} \colon W_e \to W_u$ satisfying $\phi_{ue} = \phi_{eu}^{-1}$.
As before, we identify these connection isomorphisms with nonzero complex numbers acting by multiplication. These maps are chosen so that for an edge $e = uv$, the original connection on the vertex bundle satisfies
\[
\phi_{uv} = \phi_{ev} \cdot \phi_{ue}.
\] 

One convenient choice, which we adopt, is to fix an orientation for each edge: if $e = uv$ is oriented from $u$ to $v$, we set $\phi_{ue} := \phi_{uv}$ and $\phi_{ev} := 1$.

We define the \emph{connection incidence matrix} $d(z_1,\dots,z_{2g}) \in \mathbb{C}^{|E| \times |V|}$ as the linear operator that acts on functions $h : V \to \mathbb{C}$ via
\[
d h(uv) = \phi_{ve} h(v)- \phi_{ue} h(u),
\]
where $e=uv$ is an oriented edge from $u$ to $v$. With the above choice of $\phi_{ue}$ and $\phi_{ev}$, the matrix $d(z_1,\dots,z_{2g})$ has entries $1$, $-\phi_e$, or $0$, depending on whether a vertex is the head, the tail, or not incident to a given edge. 

We denote by $\mathbf{1}_n$ the $n$-dimensional vector with all entries equal to $1$, omitting the subscript when the dimension is clear.

At the trivial connection, we have $\phi_e = 1$ for all edges, and the operator $d(\mathbf{1})$ coincides with the incidence operator $d:\mathbb{R}^V \to \mathbb{R}^E$ defined in Section~\ref{sec:period}. With a mild abuse of notation, we therefore do not introduce a new symbol and continue to denote this operator by $d$. 
In particular, the constant vector $\mathbf{1}_V$ lies in the kernel of $d(\mathbf{1})$, namely
\begin{align}\label{for:d110}
d(\mathbf{1}) \cdot \mathbf{1}_V = 0.
\end{align} We further define $d^{\star}  \in \mathbb{C}^{|V| \times |E|}$ such that for every $1$-form $\omega$
\[
d^{\star}\omega(v)= \sum_{e=uv} \phi_{ev} \omega(e).
\]
As a matrix, one can verify that $d^{\star}=d(z_1^{-1}, \dots, z_{2g}^{-1})^t$.  

Let $\mathsf{c} : E \to \mathbb{R}_{>0}$ be positive edge weights, and let $\mathsf{C}$ be the $E \times E$ diagonal matrix representing $\mathsf{c}$. The \emph{bundle Laplacian} is defined by
\[
\Delta(z_1, \dots, z_{2g}) := d^{\star} \mathsf{C} d 
= d(z_1^{-1}, \dots, z_{2g}^{-1})^{t}\, \mathsf{C}\, d(z_1, \dots, z_{2g}),
\]
which acts on functions $h:V\to\mathbb{C}$ by
\[
(\Delta h)(v)
= \sum_{e=uv} \mathsf{c}_e \phi_{ev}\bigl(\phi_{ve}h(v)-\phi_{ue}h(u)\bigr)
= \sum_{e=uv} \mathsf{c}_e \bigl(h(v)-\phi_{uv}h(u)\bigr).
\]
In particular, $\Delta$ depends only on the vertex connection. 

At the trivial connection $z_k = 1$ for all $k$, $\Delta(\mathbf{1})$ coincides with the ordinary graph Laplacian $\Delta$ introduced in Section~\ref{sec:period}.

Define the polynomial 
\[
P(z_1, \dots, z_{2g}) := \det \Delta(z_1, \dots, z_{2g}).
\]

We first compute the first derivatives of $P$. For simplicity, we write $\boldsymbol{z} = (z_1, \dots, z_{2g})$ and $\boldsymbol{z}^{-1} = (z_1^{-1}, \dots, z_{2g}^{-1})$.

\begin{lemma}\label{lem:1d0}
For any $i \in \{1,\dots,2g\}$, the first derivative of $P$ at the trivial connection vanishes:
\begin{align}\label{for:1d0}
\left. \frac{\partial P}{\partial z_i} \right|_{\boldsymbol{z}=\mathbf{1}} = 0.
\end{align}
\end{lemma}

\begin{proof}
Introduce auxiliary variables $\boldsymbol{w} = (w_1,\dots,w_{2g})$ with $\boldsymbol{w} = \boldsymbol{z}^{-1}$. By the chain rule,
\[
\frac{\partial}{\partial z_i} P(\boldsymbol{z}^{-1})
= -\frac{1}{z_i^2} \left. \frac{\partial P}{\partial w_i} \right|_{\boldsymbol{w}=\boldsymbol{z}^{-1}}.
\]

By definition, the bundle Laplacian satisfies
$\Delta(\boldsymbol{z}^{-1}) = \Delta(\boldsymbol{z})^t$.
Therefore,
\[
P(\boldsymbol{z}^{-1}) = \det \Delta(\boldsymbol{z}^{-1}) = \det \Delta(\boldsymbol{z})^t = \det \Delta(\boldsymbol{z}) = P(\boldsymbol{z}).
\]
(That is, $P(\boldsymbol{z})$ is a Laurent polynomial in $\boldsymbol{z} = (z_1, \dots, z_{2g})$ and is \emph{reciprocal}~\cite{Kenyon2011}.)

Evaluating at the trivial connection $\boldsymbol{z} = \mathbf{1}$, we then have
\[
\left. \frac{\partial P}{\partial z_i} (\boldsymbol{z}) \right|_{\boldsymbol{z}=\mathbf{1}}
= \left. \frac{\partial P}{\partial z_i} (\boldsymbol{z}^{-1}) \right|_{\boldsymbol{z}=\mathbf{1}}
= - \left. \frac{\partial P}{\partial z_i} \right|_{\boldsymbol{z}=\mathbf{1}},
\]
which establishes \eqref{for:1d0}.
\end{proof}

We now turn to the computation of the second derivatives of $P$ at the trivial connection $\boldsymbol{z} = \mathbf{1}$. This will allow us to compute the Hessian explicitly, which is a crucial step towards the proof of Theorem~\ref{thm:Hess}.  

By Jacobi's formula, the first and second derivatives of $P$ can be expressed in terms of the bundle Laplacian as
\begin{align}
\frac{\partial P}{\partial z_i}(\boldsymbol{z}) &= \tr \Big( \operatorname{adj}(\Delta(\boldsymbol{z})) \cdot \frac{\partial \Delta}{\partial z_i} \Big), \notag \\
\frac{\partial^2 P}{\partial z_i \partial z_j}(\boldsymbol{z}) &= \tr \Big( \operatorname{adj}(\Delta(\boldsymbol{z})) \cdot \frac{\partial^2 \Delta}{\partial z_i \partial z_j} \Big)
+ \tr \Big( \frac{\partial \operatorname{adj}(\Delta(\boldsymbol{z}))}{\partial z_i} \cdot \frac{\partial \Delta}{\partial z_j} \Big).
\label{for:d2P}
\end{align}
Here, $\adj(\Delta(\boldsymbol{z}))$ denotes the adjugate of $\Delta(\boldsymbol{z})$, i.e., the transpose of its cofactor matrix, satisfying
\begin{align}\label{for:adjeq}
\Delta(\boldsymbol{z}) \cdot \adj(\Delta(\boldsymbol{z})) = \det(\Delta(\boldsymbol{z})) \cdot \mathbb{I}_V.
\end{align}

At the trivial connection, $\Delta(\mathbf{1})$ reduces to the usual graph Laplacian. By Kirchhoff's Matrix-Tree Theorem, its adjugate takes the simple form
\begin{align}\label{for:adjmtt}
\adj(\Delta(\mathbf{1})) = \det(\Delta_{\bar{o}\bar{o}}) \cdot \mathbf{1}_V \mathbf{1}_V^t,
\end{align}
where $\Delta_{\bar{o}\bar{o}}$ is the reduced Laplacian obtained by removing the row and column corresponding to a fixed vertex $o$, and $\mathbf{1}_V \mathbf{1}_V^t$ is the $V \times V$ matrix with all entries equal to $1$.

With these preparations, we can now compute each term in the Hessian explicitly. The following lemma carries out this computation, and we use its results immediately afterward to prove Theorem~\ref{thm:Hess}.

\begin{lemma}\label{lem:Hessian-general}
Let $d_i := \frac{\partial d}{\partial z_i}$. For any $i, j \in \{1, \dots, 2g\}$ and evaluating at $\boldsymbol{z} = \mathbf{1}$, we have
\begin{align}
\tr \left( \adj (\Delta(\boldsymbol{z})) \frac{\partial^2 \Delta}{\partial z_i \partial z_j} \right) &= -2 \det (\Delta_{\bar{o}\bar{o}}) (d_i \cdot \mathbf{1}_V)^t \mathsf{C} (d_j \cdot \mathbf{1}_V), \label{for:tr1}\\
\tr \left( \frac{\partial \adj (\Delta(\boldsymbol{z}))}{\partial z_i} \cdot \frac{\partial \Delta}{\partial z_j} \right) &= 2 \det (\Delta_{\bar{o}\bar{o}}) (d_i \cdot \mathbf{1}_V)^t \mathsf{C} d_{\bar{o}} \Delta_{\bar{o}\bar{o}}^{-1} d_{\bar{o}}^t \mathsf{C} (d_j \cdot \mathbf{1}_V).\label{for:tr2}
\end{align}
\end{lemma}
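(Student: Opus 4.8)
The two trace identities are both consequences of differentiating the defining relation $\Delta(\boldsymbol{z}) = d(\boldsymbol{z}^{-1})^t \mathsf{C}\, d(\boldsymbol{z})$ and then substituting $\boldsymbol{z} = \mathbf{1}$, where the entries $\mathbf{1}_V$, the Matrix-Tree identity \eqref{for:adjmtt}, and the vanishing of first derivatives \eqref{for:1d0} collapse most terms. I would first record the ingredients at the trivial connection: $d(\mathbf{1}) = d$, $d(\boldsymbol{z}^{-1})^t\big|_{\mathbf{1}} = d^t$, and, since $d_i := \partial d/\partial z_i$ and $d^\star = d(\boldsymbol{z}^{-1})^t$, we have $\partial d^\star/\partial z_i\big|_{\mathbf{1}} = -d_i^t$. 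Hence, writing everything at $\boldsymbol{z} = \mathbf{1}$,
\[
\frac{\partial \Delta}{\partial z_i} = -d_i^t \mathsf{C} d + d^t \mathsf{C} d_i, \qquad
\frac{\partial^2 \Delta}{\partial z_i \partial z_j} = -d_{ij}^t \mathsf{C} d - d_j^t \mathsf{C} d_i - d_i^t \mathsf{C} d_j + d^t \mathsf{C} d_{ij} + (\text{cross terms } -d_i^t\mathsf{C}d_j - d_j^t \mathsf{C} d_i \text{ already listed}),
\]
being careful to expand the product rule correctly; the key structural point is that $d^t \mathsf{C}(\cdots)$ and $(\cdots)\mathsf{C} d$ combinations, when hit with $\adj\Delta(\mathbf{1}) = \det(\Delta_{\bar o \bar o})\,\mathbf{1}_V\mathbf{1}_V^t$ inside a trace, produce factors $d \cdot \mathbf{1}_V = 0$ by \eqref{for:d110}, killing them.

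\textbf{Equation \eqref{for:tr1}.} For the first identity I would plug $\adj(\Delta(\mathbf{1})) = \det(\Delta_{\bar o\bar o})\mathbf{1}_V\mathbf{1}_V^t$ into $\tr(\adj(\Delta)\,\partial^2_{ij}\Delta)$ and use cyclicity of the trace to write each surviving term as $\det(\Delta_{\bar o\bar o})\,\mathbf{1}_V^t (\partial^2_{ij}\Delta)\,\mathbf{1}_V$. Every summand of $\partial^2_{ij}\Delta$ that has a bare $d$ (not $d_i$, $d_j$, or $d_{ij}$) on the outside contributes $d\cdot\mathbf{1}_V = 0$; what remains is exactly the two terms $-d_i^t \mathsf{C} d_j$ and $-d_j^t \mathsf{C} d_i$ (the pure second-derivative-of-each-factor terms $d_{ij}^t \mathsf{C} d$ and $d^t \mathsf{C} d_{ij}$ also die for the same reason). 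These two give $-\det(\Delta_{\bar o\bar o})\big[(d_i\mathbf{1}_V)^t\mathsf{C}(d_j\mathbf{1}_V) + (d_j\mathbf{1}_V)^t\mathsf{C}(d_i\mathbf{1}_V)\big] = -2\det(\Delta_{\bar o\bar o})(d_i\mathbf{1}_V)^t\mathsf{C}(d_j\mathbf{1}_V)$ since $\mathsf{C}$ is symmetric and the two terms are transposes of each other (scalars). That is \eqref{for:tr1}.

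\textbf{Equation \eqref{for:tr2}.} This is the harder one: it requires differentiating $\adj(\Delta(\boldsymbol{z}))$. I would differentiate \eqref{for:adjeq}, $\Delta\cdot\adj\Delta = (\det\Delta)\mathbb{I}_V$, with respect to $z_i$ to get $(\partial_i\Delta)\adj\Delta + \Delta(\partial_i\adj\Delta) = (\partial_i\det\Delta)\mathbb{I}_V$; evaluating at $\boldsymbol{z}=\mathbf{1}$ and using $\partial_i\det\Delta = \partial_i P = 0$ from \eqref{for:1d0} yields $\Delta(\mathbf{1})\,\partial_i\adj\Delta\big|_{\mathbf{1}} = -(\partial_i\Delta)\,\adj(\Delta(\mathbf{1}))$. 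The obstacle is that $\Delta(\mathbf{1})$ is singular, so I cannot simply invert it; instead I must work on the reduced system. Concretely, I would use the known structure (cf.\ the computation behind Proposition~\ref{prop:Lop}) that $\partial_i\adj\Delta\big|_{\mathbf{1}}$ has the form $\det(\Delta_{\bar o\bar o})\big(\mathbf{1}_V\, x^t + y\,\mathbf{1}_V^t\big)$ for suitable vectors $x, y$ determined by solving the reduced equation $\Delta_{\bar o\bar o}\,y_{\bar o} = -(\partial_i\Delta\cdot\mathbf{1}_V)_{\bar o}$, i.e.\ $y_{\bar o} = -\Delta_{\bar o\bar o}^{-1}(\partial_i\Delta\cdot\mathbf{1}_V)_{\bar o}$, and then symmetrically for $x$ using $\partial_i\Delta^t = \partial_i\Delta(\boldsymbol{z}^{-1})\big|_{\mathbf{1}}$-type reasoning — together with the normalization that $\mathbf{1}_V^t\,\partial_i\adj\Delta\,\mathbf{1}_V$ can be pinned down. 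Then $\partial_i\Delta\cdot\mathbf{1}_V = (-d_i^t\mathsf{C} d + d^t\mathsf{C} d_i)\mathbf{1}_V = d^t\mathsf{C}(d_i\mathbf{1}_V)$ since $d\mathbf{1}_V = 0$, so $y_{\bar o} = -\Delta_{\bar o\bar o}^{-1}d_{\bar o}^t\mathsf{C}(d_i\mathbf{1}_V)$. Plugging the resulting expression for $\partial_i\adj\Delta\big|_{\mathbf{1}}$ into $\tr\big((\partial_i\adj\Delta)\,\partial_j\Delta\big)$, again using cyclicity and $d\mathbf{1}_V = 0$ to discard the terms with a bare $d$, leaves precisely $2\det(\Delta_{\bar o\bar o})(d_i\mathbf{1}_V)^t\mathsf{C} d_{\bar o}\Delta_{\bar o\bar o}^{-1}d_{\bar o}^t\mathsf{C}(d_j\mathbf{1}_V)$, which is \eqref{for:tr2}. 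The main obstacle is thus pinning down $\partial_i\adj\Delta\big|_{\mathbf{1}}$ explicitly from the singular identity $\Delta\,\partial_i\adj\Delta = -(\partial_i\Delta)\adj\Delta$ plus a rank/normalization argument; once that is in hand, the rest is bookkeeping with $d\mathbf{1}_V = 0$ and trace cyclicity.
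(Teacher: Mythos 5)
Your proposal is correct and follows essentially the same route as the paper's proof: both identities are obtained by differentiating $\Delta(\boldsymbol{z}) = d(\boldsymbol{z}^{-1})^t \mathsf{C}\, d(\boldsymbol{z})$ and the adjugate identity $\Delta\cdot\adj\Delta = P\cdot\mathbb{I}_V$, then collapsing terms at $\boldsymbol{z}=\mathbf{1}$ via $d\cdot\mathbf{1}_V=0$, $\adj\Delta(\mathbf{1})=\det(\Delta_{\bar o\bar o})\mathbf{1}_V\mathbf{1}_V^t$, $\partial_i P(\mathbf{1})=0$, and the reduced Laplacian $\Delta_{\bar o\bar o}^{-1}$. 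The only step you merely gesture at---pinning down the $\mathbf{1}_V x^t$ part of $\partial_i\adj\Delta$, i.e.\ $x=-y$---is genuinely needed (that term contributes $(dx)^t\mathsf{C}(d_j\mathbf{1}_V)$ to the trace and does not vanish automatically), and the paper supplies it via the antisymmetry $\left(\partial_i\adj\Delta\right)^t=-\,\partial_i\adj\Delta$ at $\boldsymbol{z}=\mathbf{1}$, deduced from the reciprocity $\adj\Delta(\boldsymbol{z})^t=\adj\Delta(\boldsymbol{z}^{-1})$, which is exactly the ``$\partial_i\Delta^t$-type reasoning'' you allude to.
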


\begin{proof}
Let $d_{ij} := {\partial d}/{\partial z_i \partial z_j}$. Recall that $\Delta(\boldsymbol{z}) = d(\boldsymbol{z}^{-1})^t \, \mathsf{C} \, d(\boldsymbol{z})$. 

At $\boldsymbol{z} = \mathbf{1}$, it holds that
\[
\frac{\partial^2 \Delta}{\partial z_i \partial z_j} 
= 
\begin{cases}
d_{ij}^t \mathsf{C} d - d_j^t \mathsf{C} d_i - d_i^t \mathsf{C} d_j + d^t \mathsf{C} d_{ji} & \text{if } i \neq j, \\
2 d_i^t \mathsf{C} d + d_{ii}^t \mathsf{C} d - 2 d_i^t \mathsf{C} d_i + d^t \mathsf{C} d_{ii} & \text{if } i = j.
\end{cases}
\]

By~\eqref{for:d110},~\eqref{for:adjmtt}, and the cyclic property of the trace, $\operatorname{tr}(AB) = \operatorname{tr}(BA)$ (whenever $AB$ and $BA$ are defined), it follows that
\begin{align*}
\operatorname{tr} \left( \operatorname{adj}(\Delta(\boldsymbol{z})) \frac{\partial^2 \Delta}{\partial z_i \partial z_j} \right)
&= \det(\Delta_{\bar{o}\bar{o}}) \cdot \operatorname{tr} \left( \mathbf{1}_V \mathbf{1}_V^t \cdot \frac{\partial^2 \Delta}{\partial z_i \partial z_j} \right) \\
&= \det(\Delta_{\bar{o}\bar{o}}) \cdot \operatorname{tr} \left( \mathbf{1}_V^t \cdot \frac{\partial^2 \Delta}{\partial z_i \partial z_j} \cdot \mathbf{1}_V \right) \\
&= -2 \det(\Delta_{\bar{o}\bar{o}}) \cdot 
\left( (d_i \cdot \mathbf{1}_V)^t \mathsf{C} (d_j \cdot \mathbf{1}_V) \right)
\end{align*}
for all $i, j \in \{1, \dots, 2g\}$. This proves~(\ref{for:tr1}).

Let $\adj (\Delta)_i := {\partial \adj (\Delta)}/{\partial z_i}$. Next, applying~(\ref{for:d110}),~(\ref{for:1d0}) and~(\ref{for:adjmtt}), differentiating~(\ref{for:adjeq}) with respect to $z_i$ and evaluating at $\boldsymbol{z} = \mathbf{1}$ yields:
\begin{align*}
    d^t \mathsf{C} d \cdot \adj (\Delta)_i = \Delta \cdot \adj (\Delta)_i &= - \frac{\partial \Delta}{\partial z_i} \cdot \det(\Delta_{\bar{o}\bar{o}}) \cdot \mathbf{1}_V \mathbf{1}_V^t + \frac{\partial P}{\partial z_i} \cdot \mathbb{I}_V\\
    &= - \det(\Delta_{\bar{o}\bar{o}}) \cdot d^t \mathsf{C} d_i \cdot \mathbf{1}_V \mathbf{1}_V^t.
\end{align*}

We now decompose $d = (d_o \; d_{\bar{o}})$, where $d_o$ denotes the column indexed with the vertex $o$. Note that $d_o = -d_{\bar{o}} \cdot \mathbf{1}_{V-1}$ provided $\boldsymbol{z} = \mathbf{1}$. Removing the row of $o$ from the equation above gives:
\begin{align*}
    d_{\bar{o}}^t \mathsf{C} d \cdot \adj (\Delta)_i = - \det(\Delta_{\bar{o}\bar{o}}) \cdot d_{\bar{o}}^t \mathsf{C} d_i \cdot \mathbf{1}_V \mathbf{1}_V^t.
\end{align*}
Left-multiplying by $d_{\bar{o}} \Delta_{\bar{o}\bar{o}}^{-1}$:
\begin{equation}\label{for:dadj}
\begin{split}
    -\det(\Delta_{\bar{o}\bar{o}}) \, d_{\bar{o}} \Delta_{\bar{o}\bar{o}}^{-1} d_{\bar{o}}^t \mathsf{C} \, d_i \cdot \mathbf{1}_V \mathbf{1}_V^t 
    =\; & d_{\bar{o}} \Delta^{-1}_{\bar{o}\bar{o}} d_{\bar{o}}^t \mathsf{C} \, d \cdot \operatorname{adj} (\Delta)_i \\
    =\; & d_{\bar{o}}
    \begin{pmatrix}
        \vert & \vert  \\
        \Delta^{-1}_{\bar{o}\bar{o}} d_{\bar{o}}^t \mathsf{C} \, d_o & \Delta^{-1}_{\bar{o}\bar{o}} d_{\bar{o}}^t \mathsf{C} \, d_{\bar{o}} \\
        \vert & \vert
    \end{pmatrix} \operatorname{adj} (\Delta)_i \\
    =\; & d_{\bar{o}}  
    \begin{pmatrix}
        \vert & \vert  \\
        - \mathbf{1}_{V-1} & \mathbb{I}_{V-1} \\
        \vert & \vert
    \end{pmatrix} \operatorname{adj} (\Delta)_i \\
    =\; & d \cdot \operatorname{adj} (\Delta)_i.
\end{split}
\end{equation}

On the other hand, noting that $w_i = z_i^{-1}$ and ${\partial w_i}/{\partial z_i} = -z_i^{-2}$, we have:
\[
\left( \frac{\partial}{\partial z_i} \adj(\Delta(\boldsymbol{w})) \right)^t
= \frac{\partial}{\partial z_i} \adj(\Delta(\boldsymbol{z}))^t 
= \frac{\partial}{\partial z_i} \adj(\Delta(\boldsymbol{w})) 
= -\frac{1}{z_i^2} \cdot \frac{\partial \adj(\Delta)}{\partial w_i}.
\] This implies that at $\boldsymbol{z} = \mathbf{1}$, 
\begin{align}\label{for:T-}
    \left( \frac{\partial \adj(\Delta)}{\partial z_i} \right)^t = - \frac{\partial \adj(\Delta)}{\partial z_i}.
\end{align}

Thus, given $\boldsymbol{z} = \mathbf{1}$, we apply~(\ref{for:dadj}) and~(\ref{for:T-}) to conclude that
\begin{align*}
\tr \left( \frac{\partial \adj(\Delta)}{\partial z_i} \cdot \frac{\partial \Delta}{\partial z_j} \right) &= \tr\left( (\adj \Delta)_i \cdot (d^t \mathsf{C} d_j - d_j^t \mathsf{C} d) \right) \\
&= -2 \tr( d_j^t \mathsf{C} d \cdot (\adj \Delta)_i ) \\
&= 2 \det(\Delta_{\bar{o}\bar{o}}) \tr\left( d_j^t \mathsf{C} d_{\bar{o}} \Delta_{\bar{o}\bar{o}}^{-1} d_{\bar{o}}^t \mathsf{C} d_i \cdot \mathbf{1}_V \mathbf{1}_V^t \right) \\
&= 2 \det(\Delta_{\bar{o}\bar{o}}) (d_j \cdot \mathbf{1}_V)^t \mathsf{C} d_{\bar{o}} \Delta_{\bar{o}\bar{o}}^{-1} d_{\bar{o}}^t \mathsf{C} (d_i \cdot \mathbf{1}_V)\\
&= 2 \det(\Delta_{\bar{o}\bar{o}}) (d_i \cdot \mathbf{1}_V)^t \mathsf{C} d_{\bar{o}} \Delta_{\bar{o}\bar{o}}^{-1} d_{\bar{o}}^t \mathsf{C} (d_j \cdot \mathbf{1}_V).
\end{align*}
This proves (\ref{for:tr2}).
\end{proof}

Define the matrix
\[
M := \begin{pmatrix}
\vert & & \vert \\
d_1 \cdot \mathbf{1}_V & \cdots & d_{2g} \cdot \mathbf{1}_V \\
\vert & & \vert
\end{pmatrix} \in \mathbb{R}^{|E| \times 2g},
\]
whose columns represent discrete closed $1$-forms on the graph with prescribed periods corresponding to the generators of $H_1(S; \mathbb{Z})$. It is clear that the matrix $M$ coincides with the one defined in~(\ref{for:M}).

\begin{proof}[Proof of Theorem~\ref{thm:Hess}]
From~(\ref{for:d2P}) and Lemma~\ref{lem:Hessian-general}, we know that the Hessian of $P(\boldsymbol{z}) = \det \Delta(\boldsymbol{z})$ at the trivial connection $\boldsymbol{z} = \mathbf{1}$ is given by
\[
\left. \operatorname{Hess}(P) \right|_{z_1=z_2=\dots=z_{2g}=1} = 2 \det(\Delta_{\bar{o}\bar{o}}) \left( M^t \mathsf{C} d_{\bar{o}} \Delta_{\bar{o} \bar{o}}^{-1} d_{\bar{o}}^t \mathsf{C} M - M^t \mathsf{C} M \right).
\]
By Proposition~\ref{prop:Lop}, the right-hand side equals $-2 \det(\Delta_{\bar{o}\bar{o}}) \cdot \Omega L$. Hence,
\begin{align*}
     \left. \operatorname{Hess}(P)\right|_{z_1=z_2=\dots=z_{2g}=1} &= -2 \det(\Delta_{\bar{o} \bar{o}}) \cdot \Omega L. \qedhere
\end{align*}
\end{proof}

\begin{remark} \rm
    There is a known identity to express the polynomial $P(z_1,\dots,z_{2g})$ as a weighted sum over \emph{cycle-rooted spanning forests} (CRSF)~\cite{Forman1993,Kenyon2011}. Combining the identity with Theorem~\ref{thm:Hess} yields an alternative proof of Corollary~\ref{cor:k1} in terms of $1$-quasi-trees. Here our $1$-quasi-trees are cycle-rooted spanning forests whose cycles are not homologically trivial. One can verify that the terms involving other CRSF in the identity vanish when evaluating the second partial derivatives at $z_1=z_2=\dots=z_{2g}=1$. Similar calculation is also carried out by Kassel and Kenyon~\cite{Kassel2017}.
\end{remark}

\section{$g$-quasi-trees} \label{sec:gHQT}

In this section, we study $g$-quasi-trees on a surface of genus $g$ and their relation to the normalized period matrix. This provides a combinatorial perspective on the Weil--Petersson Kähler potential. We also give a topological interpretation of the determinants $\det \mathcal{T}_I$ in terms of connected components of certain lifted surfaces, which, however, is valid only in a restricted, special case.

\subsection{Normalized period matrix}\label{sec:normperiod}

We focus on the classical Riemann surface and show how the usual normalized period matrix is related to our approach. It provides a combinatorial interpretation of the potential of the Weil--Petersson Kähler metric on Teichmüller space in terms of $g$-quasi-trees.

Recall that we fixed a symplectic basis $\{\gamma_1,\dots,\gamma_{2g}\}$ for the homology of the surface. Consider a basis $\{\omega_1,\dots,\omega_g\}$ of the complex vector space of holomorphic $1$-forms and define $g \times g$ complex matrices $A$, $B$ via
\[
A_{ij}:= \int_{\gamma_i} \omega_j, \quad B_{ij}:= \int_{\gamma_{g+i}} \omega_j. 
\]
In the classical theory, the normalized period matrix is the $g \times g$ complex matrix
\[
\Pi:= A^{-1} B, 
\]
which is independent of the choice of the basis of holomorphic $1$-forms. It is known that $\Pi$ is symmetric and its imaginary part is positive definite.

In contrast, we define the period matrix $L$ mapping the periods of the real part of a holomorphic $1$-form to the periods of its imaginary part. Namely, the matrix $L$ is defined such that
\[
L \begin{pmatrix}
	\Re A &  \Im A \\ \Re B & \Im B
\end{pmatrix}
= \begin{pmatrix}
	\Im A &  -\Re A \\ \Im B & -\Re B 
\end{pmatrix}.
\]
We remark that in the classical setting, $L^2=-\mathbb{I}_{2g}$ and $\det L=1$, which is generally different from the discrete setting (cf.\ Corollary~\ref{cor:det}). Since the matrix $L$ is independent of the choice of the basis of holomorphic $1$-forms, without loss of generality, we can choose a basis of holomorphic $1$-forms such that the periods satisfy $A= \mathbb{I}_{g}$ and hence $B=\Pi$. In this case the matrix $ L$ satisfies
\[
L \begin{pmatrix}
	\mathbb{I}_g & 0 \\ \Re \Pi & \Im \Pi
\end{pmatrix}
= \begin{pmatrix}
	0 &   -\mathbb{I}_g \\ \Im \Pi & -\Re \Pi 
\end{pmatrix}.
\]
Recall that
\[
\Omega = \begin{pmatrix}
	0 & \mathbb{I}_g \\
	-\mathbb{I}_g & 0
\end{pmatrix}.
\]
Thus, we have 
\begin{align*}
	\Omega L =&  \begin{pmatrix}
		0 & \mathbb{I}_g \\
		-\mathbb{I}_g & 0
	\end{pmatrix} \cdot
\begin{pmatrix}
	0 &   -\mathbb{I}_g \\ \Im \Pi & -\Re \Pi 
\end{pmatrix}
	\cdot \begin{pmatrix}
		\mathbb{I}_g & 0 \\ \Re \Pi & \Im \Pi
	\end{pmatrix}^{-1} \\ 
	=& \begin{pmatrix}
		\Im \Pi + \Re \Pi (\Im  \Pi)^{-1} \Re \Pi & - \Re \Pi (\Im  \Pi)^{-1} \\ - (\Im  \Pi)^{-1} \Re \Pi & (\Im  \Pi)^{-1}
	\end{pmatrix}.
\end{align*}
Particularly, the K\"{a}hler potential can be written as
\[
 \log(\frac{\det'(\Delta)}{\det \Im \Pi}) =  \log( \det{ }'(\Delta) \det(\Omega L)_{I,J}),
\]
where $I=J=\{g+1,\dots,2g\}$.

Over a discrete Riemann surface, the analogous expression counts $g$-quasi-trees via Theorem~\ref{thm:HQT_count}:
\[
\log( \det\phantom{ }'(\Delta) \det(\Omega L)_{I,J}) = \log \sum_{g\text{-quasi-tree } T}  \left( (\det \mathcal{T}_{I})^2   \prod_{e \in T} \mathsf{c}_{e} \right).
\]

\subsection{Towards a combinatorial interpretation of $\det\mathcal{T}_I$} \label{sec:interpret}

A natural direction for further investigation is to interpret the $\det \mathcal{T}_I$ terms in the above formula in terms of combinatorial and/or topological invariants of homological quasi-trees, analogous to other versions of the Matrix-Tree Theorem with multiplicities~\cite{Zas1982, Forman1993, DKM_Trees}. Although a complete description in general remains open, we highlight a special, non-trivial case that suggests the possibility of such a formulation.

For the rest of this section, we maintain the standing assumption that $I=\{g+1,\ldots,2g\}$ and that $T$ is a $g$-quasi-tree which, when viewed as a ribbon graph, is embedded on a sphere (so the embedding has $g+1$ faces). Such objects serve as discrete analogues of Riemann spheres.

Construct an abelian covering space $\tilde{S}$ of $S$ whose Deck group is canonically isomorphic to the $g$-dimensional lattice $\Gamma := \mathbb{Z}^{\{\gamma_{g+1}, \ldots, \gamma_{2g}\}}$ as follows\footnote{An alternative construction that does not require choosing disjoint $\gamma^i$'s is to take the covering space corresponding to the normal subgroup of $\pi_1(S)$ generated by $\gamma_1, \ldots, \gamma_g$ together with the commutator subgroup of $\pi_1(S)$.}. Choose representatives $\gamma^i$ for $i \in I := \{g+1, \ldots, 2g\}$ such that they are pairwise disjoint, and cut along these curves, labeling the resulting boundary components by $\partial^i_+$ and $\partial^i_-$. For each $\boldsymbol{u} \in \Gamma$, take a copy $S_{\boldsymbol{u}}$ of the cut surface. Identify each curve $\gamma_i$ with a standard basis element $\boldsymbol{e}_i$ of $\Gamma$. Now, for each $\boldsymbol{u} \in \Gamma$ and each $i$, glue $\partial^i_-$ of $S_{\boldsymbol{u}}$ to $\partial^i_+$ of $S_{\boldsymbol{u}+\boldsymbol{e}_i}$ in an orientation-compatible way. This gluing is defined so that if a directed loop representing $\gamma_i$ begins at a point lifted to $S_{\boldsymbol{u}}$, then the endpoint of the loop lifts to $S_{\boldsymbol{u}+\boldsymbol{e}_i}$.

\begin{theorem} \label{thm:T_I_interpret}
    Let $\tilde{T}$ be the lift of $T$ into $\tilde{S}$. If $|\det \mathcal{T}_I| \neq 0$, then it is equal to the number of connected components of $\tilde{S} \setminus \tilde{T}$. Otherwise, if $\det \mathcal{T}_I = 0$, the complement $\tilde{S} \setminus \tilde{T}$ has infinitely many connected components.
\end{theorem}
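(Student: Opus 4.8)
The plan is to analyze the combinatorics of the covering space $\tilde{S}$ and the lifted graph $\tilde{T}$ directly. First I would observe that $\tilde{S}$ is a $\Gamma$-cover of $S$ (with $\Gamma \cong \mathbb{Z}^g$), and since $T$ is a $g$-quasi-tree, the restriction of this cover to $T$ is classified by the composite $H_1(T;\mathbb{Z}) \xrightarrow{f_*} H_1(S;\mathbb{Z}) \to \Gamma$, where the second map is the projection recording the $\gamma_{g+1},\dots,\gamma_{2g}$-coordinates (i.e. intersection with $\gamma^{g+1},\dots,\gamma^{2g}$). Concretely, the image of $H_1(T;\mathbb{Z})$ under this composite is precisely the sublattice of $\Gamma$ spanned by the columns of $\mathcal{T}_I$ (thinking of $\mathcal{T}_I$ as the matrix whose $(r,s)$-entry is $\iota(\gamma^{i_r}, f_*\mu_s)$). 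Call this sublattice $\Lambda \subseteq \Gamma$. The number of connected components of $\tilde{T}$ equals the index $[\Gamma : \Lambda]$ when $\Lambda$ has full rank $g$, and is infinite when $\mathrm{rank}(\Lambda) < g$, i.e. when $\det \mathcal{T}_I = 0$; in the full-rank case $[\Gamma:\Lambda] = |\det \mathcal{T}_I|$.

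The key remaining step is to pass from counting components of $\tilde{T}$ to counting components of $\tilde{S}\setminus\tilde{T}$. Here I would use the hypothesis that $T$, as a ribbon graph, is planar with $g+1$ faces, so $S\setminus T$ consists of $g+1$ open discs $D_0, D_1,\dots, D_g$. Each disc lifts to $\tilde{S}$ as a disjoint union of discs, and since a disc is simply connected, the components of the preimage $\tilde{D}_j$ of $D_j$ are in bijection with the cosets $\Gamma / \phi_*(\pi_1(D_j))$; but $\pi_1(D_j)$ is trivial, so $\tilde{D}_j$ has $|\Gamma| = \infty$-many components in general --- this is not yet what we want. Instead the right statement is: $\tilde{S}\setminus\tilde{T}$ is the preimage of $S\setminus T$, and I claim each of its components is a copy of some $D_j$, with the number of copies of $D_j$ equal to the number of components of $\tilde{T}$ incident to $D_j$ in the appropriate sense. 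The cleanest route is a Euler-characteristic / covering-space bookkeeping argument: $\tilde{S}\setminus\tilde{T}$ deformation retracts onto a graph, and since each face of $T$ is a disc, $\tilde{S}\setminus\tilde{T}$ has the same number of components as... Actually the slickest argument: the components of $\tilde{S}\setminus\tilde{T}$ biject with the components of $\tilde{T}$. Indeed, for a connected ribbon graph $T$ embedded in the sphere, its complement of discs, when passing to the cover restricted over $T$ (a disconnected surface whose components are copies of the sphere-with-$\tilde{T}$'s-structure)... Let me phrase it as: the cover $\tilde{S}$ restricted to a regular neighborhood $N(\tilde{T})$ of $\tilde{T}$ has components in bijection with $\pi_0(\tilde{T}) = \Gamma/\Lambda$; each such component is a copy of the neighborhood $N(T)$, which is a sphere minus $g+1$ open discs; filling those discs back in recovers a sphere. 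Hence $\tilde{S}$ is the disjoint union, over $\Gamma/\Lambda$, of copies of $S$ cut-and-reglued appropriately --- more precisely each component of $\tilde{T}$ sits inside a component of $\tilde{S}$ that is a sphere, and $\tilde{S}\setminus\tilde{T}$ has exactly $(g+1)\cdot[\Gamma:\Lambda]$ components. But the theorem claims the count is $|\det\mathcal{T}_I| = [\Gamma:\Lambda]$, not $(g+1)[\Gamma:\Lambda]$, so the components of $\tilde{S}\setminus\tilde{T}$ must be glued together across different sheets: the discs of different copies get identified because the gluing curves $\gamma^i$ pass through the faces. This is the crux, and I would handle it by a careful incidence argument.

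Here is the argument I would give for the main step. Build the bipartite incidence structure: "components of $\tilde{T}$" versus "components of $\tilde{S}\setminus\tilde{T}$", with an edge whenever a component of $\tilde{T}$ borders a component of the complement. Since $\tilde{S}$ is connected (as $S$ is connected and $\tilde S$ is a cover with connected... no, $\tilde S$ need not be connected; but the relevant quotient $\tilde S / (\text{sheets over a component of }\tilde T)$ argument localizes). Rather, I would argue componentwise on $\tilde{S}$: fix a component $\tilde{S}_0$ of $\tilde{S}$ and note it is itself a regular cover of $S$ with deck group $\Gamma/\Lambda$ if $\tilde{T}$ meets $\tilde{S}_0$ in a single component --- but $\tilde T$ may meet $\tilde S_0$ in several components unless $\tilde S_0$ is connected by paths within $\tilde T$. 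The clean statement, which I would prove, is: $\pi_0(\tilde{S}\setminus\tilde{T}) \cong \pi_0(\tilde{T})$, via the map sending a complementary component to... no. The genuinely clean statement uses that $T$ is connected with connected complement-adjacency (the faces of a connected sphere-embedded graph form a connected "face graph"): lift this. The dual graph $T^\vee$ (one vertex per face of $T$, so $g+1$ vertices) is connected; its lift $\widetilde{T^\vee}$ to $\tilde S$ has $\pi_0$ equal to the complementary components of $\tilde T$, and it is a cover of $T^\vee$ classified by $H_1(T^\vee) \to \Gamma$. Now I must relate $H_1(T^\vee)\to\Gamma$ to $\Lambda$. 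Since $T$ is planar, $H_1(T) \cong H_1(T^\vee) \cong \mathbb{Z}^g$ canonically (both equal $\mathbb{Z}^{E(T)}/(\text{vertex relations})$ and its dual), and the map to $\Gamma = H_1(S)_{I}$ factors the same way because intersection numbers $\iota(\gamma^i, \cdot)$ are computed cellularly and agree on $T$ and $T^\vee$ up to the planar duality isomorphism. Therefore $\widetilde{T^\vee}$ is a cover of $T^\vee$ with the same classifying sublattice $\Lambda$, so $|\pi_0(\widetilde{T^\vee})| = [\Gamma:\Lambda] = |\det\mathcal{T}_I|$ when $\Lambda$ is full rank, and infinite otherwise. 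Since $\pi_0(\widetilde{T^\vee}) = \pi_0(\tilde S\setminus\tilde T)$, we are done.

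I expect the main obstacle to be the identification $H_1(T^\vee;\mathbb{Z}) \to \Gamma$ with the map defining $\Lambda$ --- i.e., making precise that the "face-adjacency cover" of $T$ is classified by the same sublattice $\Lambda\subseteq\Gamma$ as the lifting cover of $T$ itself. This requires (a) the planar-duality iso $H_1(T)\cong H_1(T^\vee)$, which is standard for sphere-embedded ribbon graphs, and (b) checking that pushforward-to-$\Gamma$ commutes with this iso, which I would verify by noting both maps send a cycle to the vector of its winding numbers around the punctures created by cutting along $\gamma^{g+1},\dots,\gamma^{2g}$ --- a cycle in $T$ and its "shadow" in $T^\vee$ separate the same subsets of these punctures. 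A subtlety to be careful about: the curves $\gamma^i$ are chosen disjoint from a spanning tree $T_0\subseteq T$ but pass through the faces of $T$, so the deck-transformation action is genuinely visible on the face graph; I would make this bookkeeping explicit using the tree–cotree decomposition $T = T_0 \sqcup (T\cap R)$ furnished by Proposition~\ref{prop:HQT_duality}, under which the loops $T\cap R$ and their duals give matched bases realizing $\Lambda$ on both sides.
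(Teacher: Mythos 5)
Your first paragraph is fine: the components of $\tilde{T}$ are indeed indexed by $\Gamma/\Lambda$, where $\Lambda$ is the column lattice of $\mathcal{T}_I$, since the covering restricted to the connected subcomplex $T$ is classified by the composite $H_1(T;\mathbb{Z})\to H_1(S;\mathbb{Z})\to\Gamma$. But that counts components of $\tilde{T}$, not of $\tilde{S}\setminus\tilde{T}$, and your bridge between the two rests on two false topological claims. First, $S\setminus T$ is \emph{not} a union of $g+1$ open discs: the hypothesis is that the \emph{intrinsic} ribbon-graph embedding of $T$ is spherical with $g+1$ faces, whereas inside the genus-$g$ surface $S$ the complement $S\setminus T$ is a single connected genus-zero surface with $g+1$ boundary circles (check Euler characteristics: $\chi(S\setminus T)=\chi(S)-\chi(T)=(2-2g)-(1-g)=1-g$, and connectivity is part of what actually needs proving). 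This collapses your ``$(g+1)[\Gamma:\Lambda]$ discs glued across sheets'' discussion, and the subsequent claim that each component of $\tilde{S}$ containing a component of $\tilde{T}$ ``is a sphere'' cannot be right, since $\tilde{S}$ covers a genus-$g\geq 1$ surface. Second, your rescue via the face-dual graph $T^\vee$ fails because, after contracting a spanning tree, $T$ has one vertex, $g$ edges and $g+1$ spherical faces, so $T^\vee$ has $g+1$ vertices and $g$ edges: it is a \emph{tree}, and $H_1(T^\vee;\mathbb{Z})=0\not\cong H_1(T;\mathbb{Z})\cong\mathbb{Z}^g$. (Planar duality identifies cycles of $T$ with \emph{cuts} of $T^\vee$, not with its cycles.) So the classifying map $H_1(T^\vee)\to\Gamma$ you propose to compare with $\Lambda$ is the zero map, and the argument does not close.

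The missing idea --- and the actual content of the paper's proof --- is a Mayer--Vietoris computation showing that $S\setminus T$ is connected and that the two pushforwards $H_1(T;\mathbb{Z})\to H_1(S;\mathbb{Z})$ and $H_1(S\setminus T;\mathbb{Z})\to H_1(S;\mathbb{Z})$ are injective with \emph{equal} images (this is where the genus-zero hypothesis on the intrinsic embedding enters, via saturation of the two sublattices and a rank count). Granting that, the components of $\tilde{S}\setminus\tilde{T}=p^{-1}(S\setminus T)$ over the connected set $S\setminus T$ are indexed by $\Gamma$ modulo the projection to $\Gamma$ of the image of $H_1(S\setminus T;\mathbb{Z})$, which by the lemma coincides with the projection of the image of $H_1(T;\mathbb{Z})$, namely your $\Lambda$; the index is $|\det\mathcal{T}_I|$ when nonzero and infinite otherwise. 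So the equality $|\pi_0(\tilde{S}\setminus\tilde{T})|=[\Gamma:\Lambda]$ that you want is a \emph{consequence} of this lemma, not something obtainable from the disc picture or from a dual-graph homology isomorphism. To repair your write-up you would need to prove this equality of images (or an equivalent statement) directly.
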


The key ingredient in the proof of Theorem~\ref{thm:T_I_interpret} is the following lemma on the homology of $S\setminus T$. Some calculations carry through in more general situations, but the overall statement holds only in our case due to dimensional constraints. For simplicity, in the proofs of both Lemma~\ref{lem:T_I_interpret} and Theorem~\ref{thm:T_I_interpret}, we contract an arbitrary spanning tree of $T$ and assume $T$ has one vertex and $g$ edges $e_1,\ldots,e_g$ (not to be confused with the standard basis $\boldsymbol{e}_i$'s).

\begin{lemma}\label{lem:T_I_interpret}
    The pushforward maps $H_1(T;\mathbb{Z})\rightarrow H_1(S;\mathbb{Z})$ and $H_1(S\setminus T;\mathbb{Z})\rightarrow H_1(S;\mathbb{Z})$ are injective and have equal images.
\end{lemma}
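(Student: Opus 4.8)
After contracting a spanning tree of $T$, we work in the standard model where $S$ has one vertex, one face, and $2g$ edges forming a symplectic system of loops; the quasi-tree $T$ consists of $g$ of these edges $e_1,\ldots,e_g$, while the complement $S\setminus T$ deformation retracts onto the subcomplex built from the remaining $g$ edges together with the single $2$-cell. The plan is to identify both pushforward images explicitly as sublattices of $H_1(S;\mathbb{Z})\cong\mathbb{Z}^{2g}$ and check they coincide.

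**The image of $H_1(T;\mathbb{Z})$.** Since $T$ has one vertex and $g$ edges, $H_1(T;\mathbb{Z})\cong\mathbb{Z}^g$ is freely generated by the loops $e_1,\ldots,e_g$. Injectivity of $f_*$ is part of the quasi-tree hypothesis (Definition~\ref{def:kquasitree}), so the image is a rank-$g$ sublattice $\Lambda_T\subset H_1(S;\mathbb{Z})$. I would record $\Lambda_T$ via its annihilator under the intersection pairing: a class $\alpha\in H_1(S;\mathbb{Z})$ lies in the saturation of $\Lambda_T$ iff $\iota(\alpha,f_*e_r)$ is... no — more usefully, I would use that $\Lambda_T$ is exactly the span of the columns of the matrix $\mathcal R_{-,T}$ (notation from the proof of Theorem~\ref{thm:bijection}), i.e.\ $f_*e_r = \sum_i (\mathcal R)_{i,r}\,\gamma_i$ after Poincaré duality, wait — cleaner: $f_*e_r$ is determined by its intersection numbers $\iota(\gamma^i, f_*e_r)=(\mathcal T_I)$-type entries, which are the entries of the matrix $\mathcal R$.

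**The image of $H_1(S\setminus T;\mathbb{Z})$.** The complement retracts onto the wedge of the $g$ edges $e_{g+1},\ldots,e_{2g}$ (say) together with the attaching $2$-cell, but since the $2$-cell's boundary word involves all $2g$ edges, collapsing $T$ means the boundary now expresses a word in the $e_{g+1},\ldots,e_{2g}$ that becomes a relation. So $H_1(S\setminus T;\mathbb{Z})$ is the quotient of $\mathbb{Z}^g$ (generated by the complementary loops) by this single relation — but it must be free of rank $g$ by the quasi-tree-complement statement (Proposition~\ref{prop:HQT_duality}), which forces the relation to be trivial in homology, i.e.\ the relevant piece of the boundary word is nullhomologous. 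Then I would show the pushforward of these complementary loops lands in the same sublattice $\Lambda_T$. The cleanest route: a loop $\ell$ in $S\setminus T$ has well-defined intersection number with each $e_r$, and since $\ell$ is disjoint from $T$, $\iota(\ell, f_*e_r)=0$ for all $r$; conversely a loop in $T$ is disjoint from $S\setminus T$, so the two images lie in each other's intersection-orthogonal complement. Since both are rank-$g$ sublattices and the intersection form is unimodular and symplectic, the orthogonal complement of a rank-$g$ sublattice has rank $g$; I then need that each image equals the saturation of the other's orthogonal complement, and finally that the two sublattices are actually equal (not merely commensurable).

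**The main obstacle.** The delicate point is passing from "the images are each contained in the other's $\iota$-orthogonal complement, hence commensurable" to "the images are literally equal as sublattices." Orthogonality alone gives $\Lambda_{S\setminus T}\subseteq \Lambda_T^{\perp}$ and $\Lambda_T\subseteq\Lambda_{S\setminus T}^{\perp}$, and a rank count shows these are equalities up to finite index; getting honest equality requires a Mayer–Vietoris or excision argument. I would run Mayer–Vietoris for $S = N(T)\cup (S\setminus \mathring{N}(T))$ with $N(T)$ a regular neighborhood: the sequence $H_1(\partial N(T))\to H_1(T)\oplus H_1(S\setminus T)\to H_1(S)\to \tilde H_0(\partial N(T))$ together with the fact that $S\setminus T$ and its boundary have matching $H_0$ (both connected, using that $T$ is a quasi-tree so $T^\star$ is connected) will pin down the image of $H_1(T)\oplus H_1(S\setminus T)\to H_1(S)$ as a finite-index — and by a determinant/Poincaré-duality computation, exactly index-one on the relevant span — subgroup, yielding that the two images are equal. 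I expect the bookkeeping with orientations and the precise identification of the connecting maps to be where the real work lies; the rank computations themselves are immediate from the earlier lemmas.
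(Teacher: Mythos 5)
Your overall strategy --- regular neighborhood, Mayer--Vietoris, and the observation that disjointness forces the two images into each other's $\iota$-orthogonal complements --- matches the paper's, and you correctly locate the crux: upgrading commensurability to genuine equality of sublattices. But the step you defer (``a determinant/Poincar\'e-duality computation, exactly index-one on the relevant span'') is the actual content of the proof, and nothing in your sketch supplies it. The paper closes this gap with three concrete arguments: (i) $k_*(H_1(T;\mathbb{Z}))$ is a \emph{saturated} sublattice of $H_1(S;\mathbb{Z})$, shown by adjoining $g$ further edges to $T$ so that the complement becomes a disk and a lattice basis of the image extends to one of $H_1(S;\mathbb{Z})$; (ii) $l_*(H_1(S\setminus T;\mathbb{Z}))$ is saturated because by exactness it equals $\ker\partial_1$ and $\tilde{H}_0(A\cap B;\mathbb{Z})$ is torsion-free; and (iii) the containment $l_*(H_1(S\setminus T;\mathbb{Z}))\supseteq k_*(H_1(T;\mathbb{Z}))$, which uses that in the \emph{intrinsic genus-zero embedding of $T$} each edge $e_s$ separates the sphere, so $e_s$ is the $\mathbb{Z}$-sum of the face-boundary circles on one side --- circles that live in $S\setminus T$. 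Two saturated sublattices of equal rank, one containing the other, must coincide. None of (i)--(iii) appears in your proposal.

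Second, and more seriously, you never invoke the standing hypothesis of this subsection that $T$ is a $g$-quasi-tree whose intrinsic ribbon-graph embedding is on a sphere (hence has $g+1$ faces). The lemma is false without it: for a $2$-quasi-tree on a genus-$2$ surface whose two loops have algebraic intersection number $1$, the image of $H_1(T;\mathbb{Z})$ is a non-isotropic rank-$2$ sublattice, while the image of $H_1(S\setminus T;\mathbb{Z})$ lies in its symplectic complement, so the two images meet only in $0$. Your reduction to ``a symplectic system of $2g$ loops with $T$ consisting of $g$ of them'' silently assumes the $g$ loops of $T$ are homologically non-intersecting, which is exactly what the sphere hypothesis buys; without the resulting isotropy of $\Lambda_T$, even your rational rank count does not give $\Lambda_{S\setminus T}\otimes\mathbb{Q}=(\Lambda_T\otimes\mathbb{Q})^\perp=\Lambda_T\otimes\mathbb{Q}$, so commensurability itself is unproved. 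Finally, your computation of $H_1(S\setminus T;\mathbb{Z})$ conflates removing $T$ with collapsing $T$: the boundary word of the $2$-cell imposes a relation in $H_1(S/T;\mathbb{Z})$, not in $H_1(S\setminus T;\mathbb{Z})$; the complement is an open subsurface (a sphere with $g+1$ disks removed, as the paper shows), and its homology has to be computed as such, e.g.\ via the deformation retraction onto the dual subgraph $T^\star$.
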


\begin{proof}
    Take the open $\epsilon$-neighborhood $A$ of $T$ on the surface, and let $B$ be the complement of the (closed) $\epsilon/2$-neighborhood of $T$. Clearly, $A$ and $S \setminus T$ deformation retract to $T$ and $B$, respectively. So we may work with $H_1(A;\mathbb{Z})$ and $H_1(B;\mathbb{Z})$ instead. In fact, $B$ is homeomorphic to $S\setminus T$.

As a few simple topological observations: (1) $A$ is homotopy equivalent to a wedge of $g$ loops $e_1,\ldots,e_g$ and is connected; (2) $B$ is connected; (3) $A\cap B$ is homotopy equivalent to the boundary of $A$ (as well as of $B$), consisting of $g+1$ disjoint circles $f_1,\ldots, f_{g+1}$ corresponding to the boundaries of the $g+1$ faces in the intrinsic embedding of $T$; (4) when $f_r$'s are viewed as homology classes in $H_1(A;\mathbb{Z})$ (respectively, $H_1(B;\mathbb{Z})$) with a consistent orientation induced from $A$ (respectively, $B$), the only relation among them is $\sum_{r=1}^{g+1} f_r=0$ and their integer multiples.

Consider the reduced Mayer--Vietoris exact sequence of homology (we omit the coefficient $\mathbb{Z}$ for brevity):
    \begin{align*}
H_2(S)\xrightarrow{\partial_2}H_1(A\cap B)\xrightarrow{(i_*,j_*)}H_1(A)\oplus H_1(B)\xrightarrow{k_*-l_*}H_1(S)\\
\xrightarrow{\partial_1} \tilde{H}_0(A\cap B)\xrightarrow{(i_*,j_*)} \tilde{H}_0(A)\oplus\tilde{H}_0(B)=0.
    \end{align*}

We temporarily tensor the sequence with $\mathbb{Q}$ to get some numerical invariants. $\partial_2$ is injective (over either $\mathbb{Z}$ or $\mathbb{Q}$) as it sends the fundamental class of $S$ to $\sum_r f_r\in H_1(A\cap B)$. By a simple dimension count using the aforementioned observations, $\dim H_1(A;\mathbb{Q})\oplus H_1(B;\mathbb{Q})=2g$ and in particular $\dim H_1(B;\mathbb{Q})=2g-\dim H_1(A;\mathbb{Q})=g$. By the classification of compact, orientable surfaces (with boundary), the closure of $B$ having $g+1$ boundary components while having a $g$-dimensional $1$-homology implies it has genus zero.

Working with $\mathbb{Z}$-coefficients again. $k_*$ is injective as $T$ is a $g$-quasi-tree. $l_*$ is also injective: if $l_*(\gamma)=0$, then $(0,-\gamma)\in \ker(k_*-l_*)$ implies $\gamma=j_*(\beta)$ for some $\beta\in\ker i_*$, which must be a multiple of $\sum_{r=1}^{g+1} f_r$ as stated above, but this forces $\gamma=0$ as well.

Since $B$ is topologically a sphere with $g+1$ disjoint closed disks removed, we can include $g$ new edges to $T$ and form $T'$ so the complement $S\setminus T'$ is a disk, which makes (the pushforward of) $H_1(T';\mathbb{Z})$ equals $H_1(S;\mathbb{Z})$ as in the proof of Lemma~\ref{lem:TLC_homology}.
Recall that a sublattice $\Gamma'\subset\Gamma\cong\mathbb{Z}^r\subset\mathbb{Q}^r$ is {\em saturated} if $\Gamma'=(\Gamma'\otimes\mathbb{Q})\cap\Gamma$, which is equivalent to $\Gamma/\Gamma'$ being torsion-free. A lattice basis of $k_*(H_1(A;\mathbb{Z}))$ can thus be extended to a lattice basis of $H_1(S;\mathbb{Z})$, and $k_*(H_1(A;\mathbb{Z}))$ is a saturated sublattice.

On the other hand, in the intrinsic embedding of $T$, each $e_s$ separates the sphere into two connected components, and summing the boundaries $f_r$'s of the faces on any one side (with consistent orientations) yields $e_i$. Therefore when viewing $f_r$'s as linear combinations of $e_s$, these elements conversely span $e_s$'s over $\mathbb{Z}$. By the injectivity of $k_*,l_*$, we have $l_*(H_1(B;\mathbb{Z}))\supset k_*(H_1(A;\mathbb{Z}))$. Since the image of $\partial_1$ is a sublattice of $\tilde{H}_0(A\cap B;\mathbb{Z})$ hence torsion-free, the image $(k_*-l_*)[H_1(A;\mathbb{Z})\oplus H_1(B;\mathbb{Z})]=l_*(H_1(B;\mathbb{Z}))$ is a saturated sublattice of $H_1(S;\mathbb{Z})$ as it is equal to the kernel of $\partial_1$ by exactness.

Summarizing, $l_*(H_1(B;\mathbb{Z}))\supset k_*(H_1(A;\mathbb{Z}))$ are two saturated sublattices of $H_1(S;\mathbb{Z})$. Their equality follows from the equality of their ranks; thus, it suffices to show that $\dim k_*(H_1(A;\mathbb{Q}))=\dim l_*(H_1(B;\mathbb{Q}))$. Since $k_*(H_1(A;\mathbb{Z})), l_*(H_1(B;\mathbb{Z}))$ are saturated, their respective quotients are torsion-free. Hence $k_*,l_*$ remain injective after tensoring with $\mathbb{Q}$, and the desired equality of dimension follows from the identities $\dim H_1(B;\mathbb{Q})=g=\dim H_1(A;\mathbb{Q})$ verified above.
\end{proof}

\begin{proof}[Proof of Theorem~\ref{thm:T_I_interpret}]

Fix a point $x\in S\setminus T$ and let $x_{\boldsymbol{u}}$ denote its lift in the fundamental domain indexed by $\boldsymbol{u}$. Define an equivalence relation on $\Gamma$ by declaring $\boldsymbol{u}\sim \boldsymbol{v}$ if $x_{\boldsymbol{u}}$ and $x_{\boldsymbol{v}}$ lie in the same connected component of $\tilde{S}\setminus \tilde{T}$, i.e., if there exists a path $\hat{\gamma}$ in $\tilde{S}\setminus \tilde{T}$ connecting $x_{\boldsymbol{u}}$ and $x_{\boldsymbol{v}}$. The path $\hat{\gamma}$ projects to a loop $\gamma$ in $S\setminus T$.

By Lemma~\ref{lem:T_I_interpret}, the maps
$H_1(T;\mathbb{Z}) \to H_1(S;\mathbb{Z})$ and
$H_1(S\setminus T;\mathbb{Z}) \to H_1(S;\mathbb{Z})$
are injective with equal images. Hence $\gamma$ can be viewed as a class in
$H_1(S\setminus T;\mathbb{Z}) \cong H_1(T;\mathbb{Z})$, spanned by the homology classes represented by the $e_i$'s.

Moreover, when $\gamma$ is expressed as $\sum_{i=1}^{2g} c_i \gamma_i$, the difference $\boldsymbol{u}-\boldsymbol{v}$ is equal to the truncation $\sum_{i=g+1}^{2g} c_i \boldsymbol{e}_i$. In particular, $e_i$ is represented by the $i$-th column of $\mathcal{T}_I$.
    
    Since $H_1(T;\mathbb{Z})$ is spanned by $e_i$'s, $\boldsymbol{u}\sim\boldsymbol{v}$ if and only if $\boldsymbol{u}-\boldsymbol{v}$ is in the $\mathbb{Z}$-column span of $\mathcal{T}_I$. Hence, the number of connected components of $\tilde{S} \setminus \tilde{T}$ is equal to the index of the sublattice generated by the columns of $\mathcal{T}_I$ in $\Gamma$. This index is equal to $|\det \mathcal{T}_I|$ if it is nonzero, and is infinite otherwise.
\end{proof}

\section{Some combinatorial properties of quasi-trees} \label{sec:HQT_combin}

Different notions of quasi-trees have appeared in the literature, notably one in topological graph theory. We relate these notions in this section, with an emphasis on their matroidal aspect.

\subsection{Comparison with ribbon-graph quasi-trees} \label{sec:compare}

In topological graph theory, a (spanning) subgraph $T$ of a cellularly embedded graph $G$ is considered as a quasi-tree if the complement $S\setminus (T\cup {\overline{T}^{\star}})$ is connected~\cite{Bouchet1989}; equivalently, the $\epsilon$-neighborhood of $T$ on the surface only has one boundary component~\cite{CMNR2019}. We refer to such a subgraph as a {\em ribbon-graph quasi-tree}. 

We show that ribbon-graph quasi-trees are special cases of our homological quasi-trees, though the converse does not hold in general. This fact has been observed in various contexts by others, including Richter~\cite{Richter}, whose work is discussed in the next subsection; we include a short proof here for completeness.

\begin{corollary}
    Suppose a subgraph $T$ of a cellularly embedded graph $G$ satisfies that $S\setminus (T\cup {\overline{T}^{\star}})$ is connected. Then $T$ is a $k$-homological quasi-tree where $k$ is even.
\end{corollary}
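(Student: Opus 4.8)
The plan is to reduce the statement to the following two facts: first, a ribbon-graph quasi-tree $T$ (i.e.\ one with $S\setminus(T\cup T^\star)$ connected) is automatically connected and spanning; second, the number of independent cycles it carries in $H_1(S;\mathbb Z)$ is even. The first fact is standard ribbon-graph lore: if $T$ were disconnected, then $T\cup T^\star$ would separate $S$ (components of $T$ would be enclosed by dual spanning structure), contradicting connectivity of the complement; the same argument on the dual side shows $T^\star$ spans $G^\star$. Thus $T_0\subset T$ contains a spanning tree and $\overset{\star}{C_0}\subset T^\star$ a spanning tree of $G^\star$, and $(T\cup T^\star)$ being a spanning connected subgraph of the $1$-skeleton of the medial/overlay graph whose complement is a single disc means $T=T_0\sqcup R'$ and $T^\star=\overset{\star}{C_0}\sqcup \overset{\star}{R'{}^c}$ for a tree-cotree decomposition $E=T_0\sqcup R\sqcup C_0$ with $R'\subset R$. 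By Proposition~\ref{prop:HQT_duality}, $T$ is then a $k$-quasi-tree for $k=|R'|$, and by Theorem~\ref{thm:bijection} its dual $T^\star$ is a $(2g-|R'|)$-quasi-tree.

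First I would make precise the combinatorial meaning of ``$S\setminus(T\cup T^\star)$ connected.'' Taking the overlay of $G$ and $G^\star$ (so each edge $e$ crosses its dual $e^\star$ once), the subgraph $T\cup T^\star$ sits inside this overlay, and its complementary regions correspond to the faces of the overlay not cut by $T\cup T^\star$. Connectivity of the complement forces $T\cup T^\star$ to be a connected spanning subgraph with exactly one complementary face, hence a Euler-characteristic count pins down $|E(T)|+|E(T^\star)| = |V|+|V^\star| - 1 = |V| + |F| - 1$. Since $|V|-|E|+|F| = 2-2g$, and writing $|E(T)| = |V|-1+k$ (once we know $T$ is a spanning connected subgraph with this many edges, i.e.\ a $k$-quasi-tree) and $|E(T^\star)| = |E| - |E(T)| = |F| - 1 + (2g-k)$, we get $|E(T)| + |E(T^\star)| = |V| + |F| - 2 + 2g$, which must equal $|V|+|F|-1$; hence $2g - 1 = 1$?—no: this shows the edge counts are only consistent after correctly accounting for the overlay vertices, so I must be careful to carry out the Euler characteristic on the right complex. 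The clean way is to invoke the tree-cotree description directly: $S\setminus(T\cup T^\star)$ connected $\iff$ $T$ arises as $T_0\sqcup R'$ from a tree-cotree decomposition, which is exactly the characterization in Proposition~\ref{prop:HQT_duality} once combined with the classical fact that such $T$ correspond to \emph{quasi-trees} of the ribbon graph in the sense of Bouchet/genus.

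The key remaining point is the parity of $k$. Here I would use Theorem~\ref{thm:bijection}: the dual $T^\star$ is a $(2g-k)$-quasi-tree, and when $T$ itself is a ribbon-graph quasi-tree, $T$ and $T^\star$ play symmetric roles—$S\setminus(T^\star\cup T^{\star\star})$ is also connected, so $T^\star$ is again a ribbon-graph quasi-tree. The Euler-characteristic/genus formula for ribbon graphs states that, for a spanning ribbon subgraph $T$ with $c$ components, the genus of its ribbon surface is $\tfrac12\big(|E(T)| - |V| + c - f(T) + 1\big)$ where $f(T)$ is the number of boundary components, and the quasi-tree condition forces $T$ to have $f(T)=1$ and be connected; then its genus $g(T)$ satisfies $2g(T) = |E(T)| - |V| + 1 = k$, exhibiting $k = 2g(T)$ as even. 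I expect \emph{this} genus computation—carefully justifying that a ribbon-graph quasi-tree has exactly one boundary component and is connected, and identifying its ribbon genus with $k/2$—to be the main obstacle, since it requires translating the topological hypothesis on $S\setminus(T\cup T^\star)$ into the intrinsic ribbon structure of $T$; everything after that is bookkeeping via Proposition~\ref{prop:HQT_duality} and Theorem~\ref{thm:bijection}.
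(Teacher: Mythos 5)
Your overall architecture matches the paper's for the first half: show that both $T$ and $T^\star$ are connected, then invoke the second half of Proposition~\ref{prop:HQT_duality} to conclude $T$ is a homological quasi-tree. For the parity of $k$ you diverge: the paper simply cites \cite[Proposition~5.3]{CMNR2019}, whereas you propose a self-contained Euler-characteristic computation for the intrinsic ribbon structure of $T$ (with $c=1$ and one boundary component, $2-2g(T)=|V|-|E(T)|+1$ gives $2g(T)=|E(T)|-|V|+1=k$). That computation is correct as far as it goes, and making it explicit is a reasonable alternative to the citation. Note, though, that your general genus formula $\tfrac12(|E(T)|-|V|+c-f(T)+1)$ should read $\tfrac12(|E(T)|-|V|-f(T)+2c)$; the two agree only when $c=1$, which is the case you use.

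There are two genuine problems. First, you assert an \emph{equivalence} between ``$S\setminus(T\cup T^\star)$ is connected'' and ``$T$ arises as $T_0\sqcup R'$ from a tree-cotree decomposition.'' The forward implication is what you need and is true, but the converse is false: by Proposition~\ref{prop:HQT_duality} \emph{every} homological quasi-tree arises from a tree-cotree decomposition, yet the paper exhibits (Figure~\ref{fig:HvsRG}) a $2$-homological quasi-tree that is not a ribbon-graph quasi-tree. So tree-cotree decompositions characterize homological quasi-trees, not ribbon-graph quasi-trees, and your ``$\iff$'' conflates the two classes that Section~\ref{sec:compare} is precisely distinguishing. Second, the step you yourself flag as ``the main obstacle''---that connectivity of $S\setminus(T\cup T^\star)$ forces the ribbon subgraph $T$ to be connected with exactly one boundary component---is asserted but never proved. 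This is not bookkeeping: it is the standard but nontrivial fact that the components of $S\setminus(T\cup T^\star)$ biject with the boundary components of a regular neighbourhood of $T$, and it is essentially the content hidden inside the citation the paper relies on. Without it your genus count has no hypothesis $f(T)=1$ to feed on, so your route does not actually eliminate the appeal to the ribbon-graph literature; it only relocates it. To complete your argument you would need to prove that bijection (e.g.\ by tracking how the quadrants around each crossing $e\cap e^\star$ glue along the boundary walk of $N(T)$).
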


\begin{proof}
    Let $T$ be a ribbon graph quasi-tree. Following the proof of the first half of Proposition~\ref{prop:HQT_duality}, if $(V^\star,{\overline{T}^{\star}})$ is not connected, then $T$ contains a $1$-cycle that encloses a proper set of faces and $S\setminus T$ would not have been connected. By the symmetric role of $T$ and ${\overline{T}^{\star}}$, $T$ is also connected. Therefore $T$ (respectively, ${\overline{T}^{\star}}$) contains a spanning tree of $G$ (respectively, $G^\star$) and we can apply the second half of Proposition~\ref{prop:HQT_duality}.
    The evenness of $k$ is known in~\cite[Theorem~5.3]{Bouchet1989}. 
\end{proof}

\begin{example} \rm
    Any loop that represents a non-trivial homology, viewed as an embedded cycle by inserting vertices, is a simple example of $1$-homological quasi-tree that is not a ribbon-graph quasi-tree.
    We illustrate an example of even rank here. On the left-hand side of Figure~\ref{fig:HvsRG}, a graph $G$ and its dual, both embedded on the double torus, are depicted. The subgraph of $G$ shown on the right-hand side is a $2$-homological quasi-tree, but it is not a ribbon-graph quasi-tree.
\end{example}

\begin{figure}[!ht]
	\centering
	\includegraphics[scale=.5]{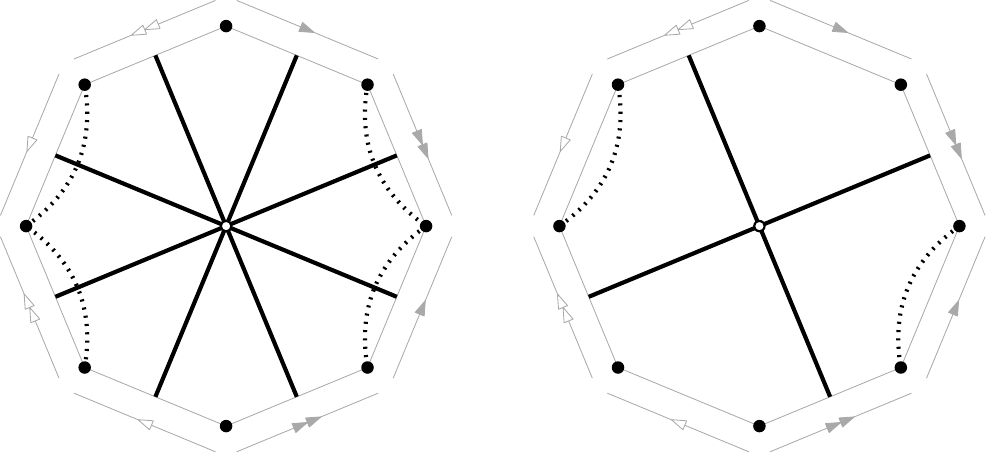}
\caption{On the left-hand side: a graph (represented by a white vertex and four solid edges) and its dual (represented by a black vertex and four dotted edges), both embedded on the double torus. On the right-hand side: a subgraph of the original graph.}
	\label{fig:HvsRG}
\end{figure}

When $k=2g$ is maximal, a $2g$-homological quasi-tree $T$ has $|V|-1+2g$ edges and the complement of $T$ is a disk. The set ${\overline{T}^{\star}}$ is a spanning tree on the dual graph and thus contractible (Lemma~\ref{lem:TLC_homology}). It implies that $S\setminus (T\cup {\overline{T}^{\star}})$ is connected. Therefore, a $2g$-homological quasi-tree is always a ribbon-graph quasi-tree.

\begin{corollary}
    Suppose $T$ is a $2g$-homological quasi-tree. Then it satisfies the condition that $S\setminus (T\cup {\overline{T}^{\star}})$ is connected.
\end{corollary}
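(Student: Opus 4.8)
The plan is to show that $S \setminus T$ is an open disk and that $T^\star$ sits inside it as an embedded tree, whence non-separation is immediate.

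First, by the $k = 2g$ case of Theorem~\ref{thm:bijection} (equivalently Proposition~\ref{prop:HQT_duality}), $T^\star$ is a $0$-quasi-tree of $G^\star$, i.e.\ a spanning tree of $G^\star$; in particular it is connected and contains every vertex of $G^\star$, that is, the center of every face of $G$. Moreover there is a tree-cotree decomposition $E = T_0 \sqcup R \sqcup C_0$ with $T_0 \subseteq T$ a spanning tree of $G$; since $T$ has $|V| - 1 + 2g$ edges and $C_0$ has $|F| - 1$ edges, a count shows $R = T \setminus T_0$ has exactly $2g$ edges and $C_0 = E \setminus T$, so $\overset{\star}{C_0} = T^\star$.

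I would then pass to $G' := G / T_0 \!\setminus\! C_0$, which (as recalled in Section~\ref{sec:background}, cf.\ \cite{Eppstein_TRC}) is still cellularly embedded on $S$ with a single vertex, a single face $\phi'$, and edge set $R$. Contracting an edge of $T_0$, and deleting an edge of $C_0$ (= contracting its dual edge in $\overset{\star}{C_0}$), are each realized by collapsing a disk neighborhood of a contractible subcomplex, hence by a self-homeomorphism of $S$; each collapsed arc lies wholly inside $T$ (for $e \in T_0 \subseteq T$) or wholly inside $T^\star$ (for $e^\star \in \overset{\star}{C_0}$, since $e^\star$ meets $G$ only along the interior of $e \notin T$, so $e^\star \cap T = \emptyset$). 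Consequently the composite homeomorphism carries $T$ onto the $1$-skeleton of $G'$ and carries the disjoint set $T^\star$ onto the unique vertex $p$ of $(G')^\star$, namely a point of $\phi'$, while carrying $S \setminus (T \cup T^\star)$ homeomorphically onto $\phi' \setminus \{p\}$. Since $G'$ is cellular, $\phi'$ is an open disk, so $S \setminus (T \cup T^\star) \cong \phi' \setminus \{p\}$ is connected.

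The step needing the most care is the claim that these contractions and deletions are homeomorphisms of $S$ respecting the relevant subsets; it reduces to the standard facts that a tree in an embedded graph has a regular neighborhood homeomorphic to a disk and that collapsing a disk in a surface yields a homeomorphic surface, together with the disjointness observations above. An alternative that does not invoke $G'$ is the direct argument: a compactly supported Euler characteristic count gives $\sum_i \chi(C_i) = \chi(S) - \chi(T) = (2 - 2g) - (1 - 2g) = 1$ over the components $C_i$ of $S \setminus T$, and since $T^\star$ is connected, lies in $S \setminus T$, and meets every $C_i$ (each $C_i$, being open, contains a whole face of $G$, whose center lies on $T^\star$), there is exactly one component $C_1$, which therefore has $\chi(C_1) = 1$ and so is an open disk; the finite tree $T^\star$ lies in the interior of $C_1$ with a regular neighborhood that is a sub-disk, so $C_1 \setminus T^\star$ is an annulus, in particular connected, and equals $S \setminus (T \cup T^\star)$.
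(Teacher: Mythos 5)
Your proof is correct, and your ``alternative'' direct argument at the end is essentially the paper's own proof: the complement of $T$ is an open disk and $T^\star$ is a contractible spanning tree of the dual graph sitting inside it, so deleting it leaves a connected set --- with your Euler-characteristic count and the ``$T^\star$ meets every component'' observation supplying justification that the paper leaves implicit. The first route via $G' = G/T_0\!\setminus\!C_0$ is a mild variant of the same idea (one should say ``quotient map onto a surface homeomorphic to $S$'' rather than ``self-homeomorphism of $S$,'' but the argument goes through).
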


\subsection{The (delta-)matroidal aspects of quasi-trees}

We first state the definition of matroids and delta-matroids~\cite{Bouchet1989}. Here $A\triangle B$ is the symmetric difference of two sets.

\begin{definition}
    A non-empty collection $\mathcal{F}$ of subsets of $E$ is a {\em delta-matroid} if for any $T,T'\in\mathcal{F}$ and $e\in T\triangle T'$, there exists $f\in T\triangle T'$ such that $T\triangle\{e,f\}\in\mathcal{F}$. It is a {\em matroid} if, in addition, whenever $e\in T\setminus T'$, $f$ can be chosen from $T'\setminus T$ and vice versa.
\end{definition}

\begin{proof} [Proof of Theorem~\ref{thm:delta_mat}]
    Recall that the collection $M(G)$ of spanning trees of $G$, as well as the collection $M^\star(G^\star)$ of complement of spanning trees of $G^\star$, are matroids on $E$~\cite[Example~1.6]{Eurhuh2020}. By Proposition~\ref{prop:HQT_duality}, $T\subset E$ is a homological quasi-tree if and only if there exists bases $T_0\in M(G)$ and $E\setminus C_0\in M^\star(G^\star)$ such that $T_0\subset T\subset E\setminus C_0$. Hence, the collection of homological quasi-trees form a {\em saturated} delta-matroid (a delta-matroid in which $F_1,F_3\in\mathcal{F}, F_1\subset F_2\subset F_3$ imply $F_2\in\mathcal{F}$). In particular, each level is a matroid~\cite[Section~2.2]{Eurhuh2020}.
\end{proof}

In fact, more can be said about homological quasi-trees once the matroidal connection has been established. Since they are tangential to the main topic of this article, we only highlight a few essential ones.

The map $M^\star(G^\star)\rightarrow M(G)$ induced by the identity map on ground set $E$ is a matroidal {\em quotient}, hence we can apply~\cite[Theorem~1.3]{Eurhuh2020} and obtain log-concavity results concerning the naive count of homological quasi-trees.

\begin{corollary}
    Let $h_{|V|-1+k}$ be the number of $k$-homological quasi-trees. Then $h_{|V|-1},\ldots,h_{|V|-1+2g}$ is a {\em strongly log-concave} sequence, i.e., $\frac{h_l^2}{\binom{|E|}{l}^2}\geq \frac{h_{l-1}}{\binom{|E|}{l-1}}\frac{h_{l+1}}{\binom{|E|}{l+1}}$ for $|V|-1<l<|V|-1+2g$.
\end{corollary}

In~\cite{Richter}, Richter considered, for each pair of positive integers $p,q$, the collection $\mathcal{H}(G;p,q)$ of subgraphs $T$ such that $T$ (respectively, ${\overline{T}^{\star}}$) has at most $p$ (respectively, $q$) connected components, and proved each $\mathcal{H}(G;p,q)$ is a {\em $g$-matroid} in the sense of Tardos~\cite{Tardos1985}. Again using Proposition~\ref{prop:HQT_duality}, we have:

\begin{corollary}
    The collection of homological quasi-trees is precisely $\mathcal{H}(G;1,1)$, and hence it is a $g$-matroid.
\end{corollary}

\section{Examples} \label{sec:example}

In this section, we illustrate the concept of homological quasi-trees with explicit examples on a surface of genus $2$. 

Consider the graph $G$ cellularly embedded on a genus-2 surface, obtained by identifying the sides of an octagon according to the usual genus-$2$ pattern, as shown in Figure~\ref{fig:genus2_graph}. It has 13 vertices in the interior of the octagon and 26 edges. The edges are labelled in the figure. The loops $\gamma^1,\gamma^2,\gamma^3,\gamma^4$ consist of edges from the dual cell decomposition and form a symplectic basis of the first homology of the genus-2 surface. With edge weights $\mathsf{c}_1,\mathsf{c}_2,\dots,\mathsf{c}_{26} \in \mathbb{R}_{>0}$ assigned to the edges of $G$, it defines a discrete Riemann surface together with the discrete period matrix $L$.

\begin{figure}[h]
    \centering
        \includegraphics[width=0.6\textwidth]{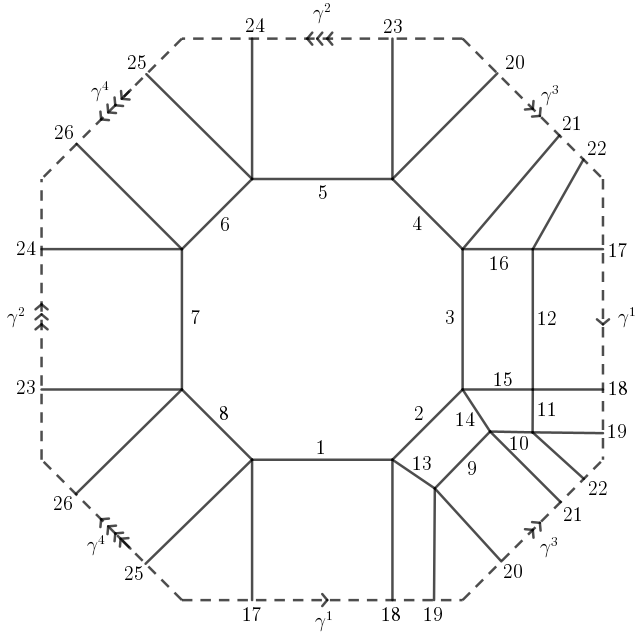}
    \caption{A graph $G$ cellularly embedded on a genus-$2$ surface.}
    \label{fig:genus2_graph}
\end{figure}

\subsection*{Example: A $1$-quasi-tree}

Figure~\ref{fig:genus2_1qt} displays a subgraph $T_1$ of $G$. Its edges are highlighted in bold. It is a spanning connected subgraph with $|V|=13$ edges and hence contains exactly one cycle. The cycle intersects the loop $\gamma^3$ twice transversely. If we take the index set $I=J=\{3\}$, the determinant of the associated intersection matrix
\[
\det \mathcal{T}_I = \det \mathcal{T}_J = \pm 2,
\]
where the sign depends on the orientation of the cycle and that of $\gamma^3$. However, the product $\det \mathcal{T}_{I} \cdot \det \mathcal{T}_{J}$ is always independent of the choice of the orientation.

In the summation with $I=J=\{3\}$,
\[
\sum_{\text{$1$-quasi-tree } T} \left( \det \mathcal{T}_{I} \cdot \det \mathcal{T}_{J} \cdot \prod_{e \in T} \mathsf{c}_e \right)
\]
the graph $T_1$ corresponds to the term
\[
4 \mathsf{c}_1 \mathsf{c}_{3}  \mathsf{c}_{4} \mathsf{c}_{5} \mathsf{c}_6 \mathsf{c}_7 \mathsf{c}_9 \mathsf{c}_{11} \mathsf{c}_{13} \mathsf{c}_{15} \mathsf{c}_{17} \mathsf{c}_{21} \mathsf{c}_{22}.
\]

\begin{figure}[h!]
    \centering
        \includegraphics[width=0.6\textwidth]{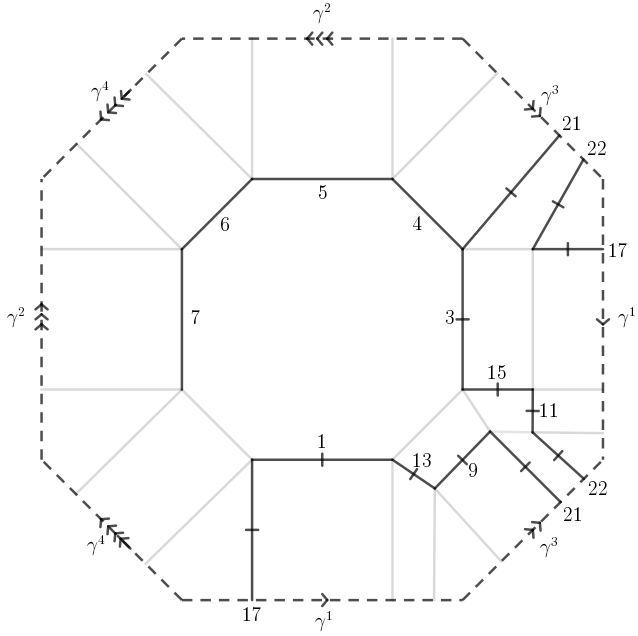}
    \caption{A $1$-quasi-tree $T_1$ (thick edges) on the genus-2 surface. It contains a unique cycle that is homologically non-trivial on the surface. The cycle consists of the edges marked with a transverse stroke.}
    \label{fig:genus2_1qt}
\end{figure}

\subsection*{Example: A $2$-quasi-tree}
Figure~\ref{fig:genus2_2qt} shows a subgraph $T_2$ which is the union of $T_1$ together with the edge labeled by $26$. The added edge yields a new cycle that intersects the loop $\gamma^4$ once transversely and has no intersection with the other loops in the basis. If we take the index set $I=J=\{3,4\}$, the associated intersection matrix $\mathcal{T}_I= \mathcal{T}_{J}$ is a 2 by 2 matrix and has determinant
\[
\det \mathcal{T}_I = \det \mathcal{T}_J = \pm 2.
\]
In the summation with $I=J=\{3,4\}$,
\[
\sum_{\text{$2$-quasi-tree } T} \left( \det \mathcal{T}_{I} \cdot \det \mathcal{T}_{J} \cdot \prod_{e \in T} \mathsf{c}_e \right)
\]
the graph $T_2$ corresponds to the term
\[
4 \mathsf{c}_1 \mathsf{c}_{3}  \mathsf{c}_{4} \mathsf{c}_{5} \mathsf{c}_6 \mathsf{c}_7 \mathsf{c}_9 \mathsf{c}_{11} \mathsf{c}_{13} \mathsf{c}_{15} \mathsf{c}_{17} \mathsf{c}_{21} \mathsf{c}_{22} \mathsf{c}_{26}.
\]

\begin{figure}[h!]
    \centering
        \includegraphics[width=0.6\textwidth]{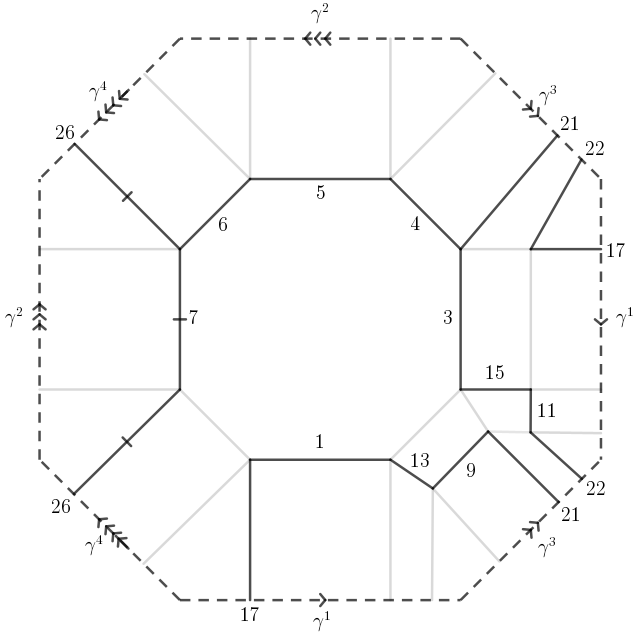}
    \caption{A $2$-quasi-tree $T_2$ (thick edges) on the genus-2 surface. It is the union of $T_1$ together with the edge labeled by $26$. The added edge forms a new cycle, consisting of the edges marked with a transverse stroke. This new cycle, together with the cycle from $T_1$, forms a basis of $H_1(T_2;\mathbb{Z})$.}
    \label{fig:genus2_2qt}
\end{figure}

\section*{Acknowledgements}
We would like to thank Cédric Boutillier for discussions leading to Theorem~\ref{thm:Hess}, Sang-il Oum for referring us to a paper by Richter, and Iain Moffatt and Donggyu Kim for pointing out references on delta-matroids.

\bibliographystyle{plainurl}
\bibliography{reference}
\end{document}